\numberwithin{equation}{section}
\sloppy\pagestyle{plain}
\newtheorem{thm}{Theorem}[section]
\newtheorem{conj}{Conjecture}[section]
\newtheorem{prop}[thm]{Proposition}
\newtheorem{lem}[thm]{Lemma}
\newtheorem{cor}[thm]{Corollary}
\newtheorem{claim}[thm]{Claim}
\theoremstyle{definition}
\newtheorem{defi}[thm]{Definition}
\newtheorem{prob}[thm]{Problem}
\theoremstyle{remark}
\newtheorem{rmk}[thm]{Remark}
\numberwithin{equation}{section}
\newcommand{\be}{\beta}
\newcommand{\beq}{\begin{equation}}
\newcommand{\eeq}{\end{equation}}
\newcommand{\PP}{\mathbb{P}}
\newcommand{\K}{K\"{a}hler}    
\newcommand{\KE}{K\"{a}hler--Einstein\;}
\def\KEE{K\"ahler--Einstein edge }
 \newcommand{\NN}{{\mathbb N}}
\newcommand{\FF}{{\mathbb F}}
\title{Eguchi--Hanson metrics arising from\\ K\"ahler-Einstein edge metrics}
\date{14 December 2023}
\begin{document}
\author{Yuxiang Ji, Yanir A. Rubinstein, Kewei Zhang}
\maketitle

\centerline{\it Dedicated to Scott Wolpert on the occassion of his retirement }


\begin{abstract}
    Calabi--Hirzebruch manifolds are higher-dimensional generalizations of both the football and Hirzebruch surfaces. 
    We construct a family of \KE edge metrics singular along two disjoint divisors on the Calabi--Hirzebruch manifolds and study their Gromov--Hausdorff limits when either cone angle tends to its extreme value. As a very special case, we show that the
    celebrated Eguchi--Hanson metric arises in this way naturally as a  Gromov--Hausdorff limit. We also completely describe all other (possibly rescaled) Gromov--Hausdorff limits which exhibit a wide range of behaviors, resolving in this setting a conjecture of Cheltsov--Rubinstein. This gives a new interpretation of
    both the Eguchi--Hanson space
    and Calabi's Ricci flat spaces
    as limits of compact singular Einstein spaces.
\end{abstract}

\tableofcontents


\section{Motivation}

The main motivation for this
work is a program of Cheltsov--Rubinstein 
concerning the small angle deformation of \KEE (KEE) metrics,
which in particular makes the following prediction \cite[Conjecture 1.11]{CR15}:

\begin{conj}
\label{GeneralConj}
Suppose that $(X,D)$ is strongly asymptotically log
Fano manifold with $D$ smooth and irreducible.
Suppose that 
$
\kappa:=\inf\{\NN\ni k\le\dim X \,:\, (K_X+D)^k=0\}\le\dim X,
$
and that there exist KEE metrics $\omega_\be, \be\in(0,\epsilon)$ on $(X,D)$
for some $\epsilon>0$.
Then, $(X,D,\omega_\be)$
converges in an appropriate sense as $\be$ tends to zero  to a
generalized KE metric $\omega_\infty$ 
that is Calabi--Yau along its generic $(\dim X+1-\kappa)$-dimensional
fibers.

\end{conj}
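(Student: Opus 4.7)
The plan is to reduce the conjecture to an ODE problem by restricting to a maximally symmetric class of log pairs, and to let the explicit solutions dictate both the sense of convergence and the identification of the limit. Calabi--Hirzebruch manifolds carry a cohomogeneity-one $U(n)$-action whose fixed loci are precisely the two boundary divisors $D_0$ and $D_\infty$, and this symmetry is compatible with the \KE edge condition along each divisor; hence a Calabi momentum-type ansatz applies and one gets a rigorous model in which to test the conjecture.

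First, I would construct the family $\omega_\be$ via the Calabi ansatz, writing the \K\ potential as a function of the moment variable so that the twisted Monge--Amp\`ere equation on $(X,D_0+D_\infty)$ with prescribed cone angles reduces to a second-order ODE on an interval, with boundary conditions that encode the cone angles and the smooth extension across $D_0$ and $D_\infty$. Existence and uniqueness then follow from a single integration, and one obtains closed-form control on the metric's coefficients as functions of $\be$.

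Next, I would analyse the ODE in the limit as $\be \ra 0$. The crucial issue is to identify the correct basepoint and rescaling: the manifold typically collapses in some directions while expanding in others, so the pointed Gromov--Hausdorff limit depends essentially on the zoom-in chosen. Reading off from the ODE, the $\PP^{n-1}$-fibers of the natural projection should emerge as the ``Calabi--Yau fibers'' predicted by the conjecture, while rescaling near the zero-section or infinity-section should produce non-compact Ricci-flat ALE models -- in particular, for a distinguished angle, the Eguchi--Hanson space. Concretely, one parametrises the family by the momentum variable, extracts uniform $C^0$ and $C^1$ bounds independent of $\be$, and passes to the limit in each regime.

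The main obstacles are twofold. First, the conjecture does not specify the sense of convergence, and a single notion must be shown to capture every regime uniformly: compact \KE limits in some rescalings, collapsed fibrations in others, and complete non-compact Ricci-flat spaces in the bubbling regime. Second, verifying the Calabi--Yau-along-fibers conclusion requires matched asymptotics near $D_0$ and $D_\infty$, where the cone singularities interact with the collapsing fibers; one must separate scales carefully and propagate the ODE estimates through the appropriate rescaled Gromov--Hausdorff limits without losing control of the Ricci tensor on generic fibers.
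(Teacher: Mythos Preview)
Your plan matches the paper's strategy in outline: Calabi ansatz on $\mathbb{F}_{n,k}$, reduction of the twisted KE equation to an ODE in the momentum variable, and explicit analysis of the ODE as the angle degenerates. That is exactly how the paper proceeds. But two points need to be flagged.

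First, and most importantly, restricting to Calabi--Hirzebruch manifolds does not \emph{reduce} the conjecture; it only verifies it in that special class. The paper is explicit about this---it resolves Conjecture~\ref{GeneralConj} \emph{in the setting of} $\mathbb{F}_{n,k}$, not in general. Your phrasing suggests that the symmetric case somehow settles the full conjecture, which it does not: a general strongly asymptotically log Fano pair has no cohomogeneity-one symmetry and the ODE reduction is unavailable.

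Second, within the $\mathbb{F}_{n,k}$ analysis you have two identifications reversed. The natural projection $\mathbb{F}_{n,k}\to\mathbb{P}^{n-1}$ has $\mathbb{P}^{n-1}$ as the \emph{base}; the fibers are the $\mathbb{P}^1$'s, and in the small-angle limit $\be\searrow 0$ it is these one-dimensional fibers that become Calabi--Yau (cylinders $\mathbb{C}^*$ with $\omega_{\mathrm{Cyl}}$) after the fiberwise rescaling, while the unrescaled limit collapses onto $(\mathbb{P}^{n-1},k\omega_{\mathrm{FS}})$. Moreover, the Eguchi--Hanson space does \emph{not} arise in the $\be\searrow 0$ regime at all: in the paper it is the pointed Gromov--Hausdorff limit of the \emph{large-angle} family $\eta_{\beta_1}$ as $\beta_1\nearrow n/k$ (Theorem~\ref{thm: metric limit nk case}), which is an entirely separate limit from the one governed by Conjecture~\ref{GeneralConj}. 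Your second paragraph conflates these two regimes.
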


In this article 
we actually treat a slightly more general situation where $D$ is allowed
to have two disjoint smooth components $D=D_1+D_2$, but the setting is
essentially identical to that of Conjecture \ref{GeneralConj} since
the angle $2\pi\beta_1$ along $D_1$ of the KEE metric $\omega_{\be_1,\be_2}$ 
actually determines the angle  $2\pi\beta_2$ along $D_2$
and, importantly, vice versa.

A second motivation for this article is given by a prediction posed by two of the present authors in a previous work
concerning the {\it large} angle limit of a family of 
\KEE metrics constructed on the second Hirzebruch surface $\FF_2$. On that surface let $D_1:=Z_{-2}$ denote the $-2$-curve and $D_2:=Z_2$ the smooth
infinity section, a $2$-curve
satisfying $Z_{-2}\cap Z_2=\emptyset$.
According to \cite[Theorem 1.2]{RZ21} there exist for each $\beta_1\in(0,1)$
a unique \KEE metric $\omega_{\be_1,\be_2}$  with angle $2\pi\beta_1$ along $Z_{-2}$ and angle
$2\pi\beta_2=2\pi(2\be_1-3+\sqrt{9+12\beta_1-12\beta_1^2})/4$ along $Z_2$
and cohomologous to $\frac{1+\be_2}{1-\be_1}[Z_2]-[Z_{-2}]$.
The following prediction was made \cite[Remark 5.1]{RZ21}:

\begin{conj}
\label{RZConj}
As $\beta_1$ tends to $1$, an appropriate limit of
$(\FF_2,\omega_{\be_1,\be_2})$ converges to the Eguchi--Hanson metric.
\end{conj}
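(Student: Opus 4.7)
The plan is to exploit the $U(2)$-invariance of $\omega_{\be_1,\be_2}$ and reduce to an ODE via the Calabi ansatz. First, I would parametrize $\FF_2$ as $\PP(\mathcal{O}\oplus\mathcal{O}(-2))\to\PP^1$; in this presentation the complement $\FF_2\setminus Z_2$ is canonically the total space of $\mathcal{O}_{\PP^1}(-2)$, which is the underlying complex manifold of the Eguchi--Hanson space, and $Z_{-2}$ is the zero section. With $r\ge 0$ a fiber-radius coordinate, the Calabi ansatz writes $\omega_{\be_1,\be_2}=\ddbar\phi(r)$, reducing the \KEE equation to a second-order ODE for a momentum profile $\Phi$ on a bounded interval $[a_{\be_1},b_{\be_1}]$, whose endpoints correspond to $Z_{-2}$ and $Z_2$ and whose slopes there encode the cone angles $2\pi\be_1$ and $2\pi\be_2$.

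Second, rescale. Since $[\omega_{\be_1,\be_2}]=\frac{1+\be_2}{1-\be_1}[Z_2]-[Z_{-2}]$ blows up as $\be_1\to 1$ (with $\be_2\to 1/2$), I would multiply $\omega_{\be_1,\be_2}$ by $(1-\be_1)$, up to an equivalent normalization fixed by matching volumes with the Eguchi--Hanson asymptotics. In momentum coordinates this corresponds to an affine change of variable that fixes the left endpoint (where $Z_{-2}$ sits) while sending the right endpoint (where $Z_2$ sits) to infinity. On the PDE side the Einstein constant $\mu_{\be_1,\be_2}$ is rescaled by $(1-\be_1)^{-1}$, but the angle defect at $Z_{-2}$ tends to $0$, so the edge equation degenerates to the smooth Ricci-flat \K equation on $\mathcal{O}_{\PP^1}(-2)$. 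It is reassuring to note that the limiting value $\be_2\to 1/2$ is precisely the orbifold cone angle of the asymptotic cone $\RR^4/\ZZ_2$ of the Eguchi--Hanson space.

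Third, pass to the limit in the ODE. The rescaled momentum profile $\Phi_{\be_1}$ should converge on compact subsets of $[0,\infty)$ to the Eguchi--Hanson profile (essentially $\Phi_{EH}(s)=\sqrt{s^2+c^2}$ for an explicit constant $c$ determined by the limiting asymptotics), and one checks that the boundary condition at $s=0$ becomes the smooth extension condition across the zero section. From local uniform convergence of $\Phi_{\be_1}$ one obtains smooth convergence of the rescaled \K metrics on compact subsets of $\mathcal{O}_{\PP^1}(-2)$, with the basepoint chosen on $Z_{-2}$.

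The main obstacle, I expect, is upgrading this smooth local convergence to \emph{pointed} Gromov--Hausdorff convergence: one needs uniform geometric control as one approaches the receding divisor $Z_2$, so that its escape to infinity under the rescaling produces a complete ALE limit rather than losing volume or collapsing. Here the ODE ansatz is exactly what makes the analysis tractable, since such tail estimates reduce to one-dimensional monotonicity and distance estimates for $\Phi_{\be_1}$ on intervals $[R,(1-\be_1)b_{\be_1}]$ with $R\to\infty$, matched against the known quartic volume growth of Eguchi--Hanson.
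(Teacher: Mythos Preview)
Your overall strategy---Calabi ansatz, ODE reduction, basepoint on the zero section $Z_{-2}$, infinity section $Z_2$ pushed off to infinity---is exactly the paper's approach, and your identification of the pointed Gromov--Hausdorff step as the main technical issue is correct (the paper handles it by the elementary fiber-length estimate that $\int_1^T\varphi^{-1/2}\,d\tau\to\infty$, together with local smooth convergence of the ODE solution).

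However, your second step contains a genuine error. You propose rescaling $\omega_{\be_1,\be_2}$ by $(1-\be_1)$ on the grounds that the cohomology class $\frac{1+\be_2}{1-\be_1}[Z_2]-[Z_{-2}]$ blows up. But pair this class with $[Z_{-2}]$: since $[Z_2]\cdot[Z_{-2}]=0$ and $[Z_{-2}]^2=-2$, the area of the zero section is $2$, \emph{independent of} $\be_1$. Only the area of $Z_2$ (and the total volume) blows up, which is exactly what one expects for a non-compact limit. Thus \emph{no} global rescaling is needed; in the paper's momentum-coordinate language this is the normalization $\inf\tau=1$. Your proposed factor $(1-\be_1)$ would instead collapse the zero section and fix the infinity section---this is the paper's $\xi_{\beta_2}$ family, whose limit is the compact orbifold $\mathbb{P}^2(1,1,2)$, not Eguchi--Hanson. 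Relatedly, your claim that the rescaled Einstein constant tends to zero is wrong: under $\omega\mapsto(1-\be_1)\omega$ the Einstein constant $\lambda=1-\be_1$ becomes $1$, not $0$. Without rescaling, $\lambda=1-\be_1\to 0$ directly and the limiting ODE is the Ricci-flat one. You seem to have the right geometric picture (``fix the left endpoint, send the right endpoint to infinity'') but paired it with the wrong scaling factor; once you drop the $(1-\be_1)$ your outline becomes essentially the paper's proof.
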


The virtue of Conjecture \ref{RZConj} is that it proposes
a remarkable new interpretation of the Eguchi--Hanson space
from mathematical physics
as an angle deformation limit of compact singular Einstein spaces with edge singularities.

The goal of the present article is to treat both conjectures
and a bit more.
We solve Conjecture  \ref{GeneralConj} in the 
setting of Calabi--Hirzebruch manifolds as well as solve  
Conjecture \ref{RZConj}. In fact we solve a
generalized version of Conjecture \ref{RZConj}
that interprets Calabi's Ricci flat spaces
as limits of compact KEE spaces.
Moreover,
due to the existence of
{\it two} disjoint divisors 
there are also interesting limits
to study that are not part of
the above conjectures:
these limits
are studied in Theorems
\ref{thm: main} and 
\ref{thm: main relation} below.

\section{Results}

Let $M$ be a compact K\"{a}hler manifold and $D=D_1+\cdots D_r$ a simple normal crossing divisor in $M$. A K\"{a}hler metric $\omega$ is said to have edge singularity along $D$ if $\omega$ is smooth on $M\setminus D$ and asymptotically equivalent to the model edge metric along $D$ \cite[Definition 3.1]{Y14}. The study of \KE edge metrics dates back to Tian \cite{Tian96} where he considered applications of such metrics to algebraic geometry. Cheltsov--Rubinstein \cite{CR15} initiated the program of studying small angle limits of \KE edge metrics. In previous works, two of us treated the Riemann surface footballs case and Hirzebruch surfaces case by first constructing \KE edge metrics on the manifolds using Calabi ansatz
and then studying their limiting behaviors when the cone angles tend to $0$
\cite{RZ21, RZ20}. 
In this paper, we consider a more general setting, Calabi--Hirzebruch manifolds.
To construct these KEE metrics we use
the standard Calabi ansatz
in Section \ref{sec: general}, generalizing \cite{RZ21}. 
The angle at either divisor
$D_1:=Z_{n,k}$ or $D_2:=Z_{n,-k}$
then determines the angle
on the other divisor which leads to two families of KEE
metrics $\eta_{\beta_1}$ and $\xi_{\beta_2}$
on $\mathbb{F}_{n, k}$.

\begin{rmk}
Very recently,
Biquard--Guenancia completely
solved the folklore case $\kappa=1$
of Conjecture \ref{GeneralConj}
\cite{BG22}
(that case of the conjecture 
seems to have been 
already conjectured
by Tian and Mazzeo in the 90's
and then explicitly stated
around 2009 by Donaldson
\cite{Mazzeo,JMR16,DonConic}), 
and interestingly
the Calabi ansatz makes an appearance in their proof as well.
It would be interesting to explore
whether some of the rather
elementary ideas here 
can be combined with some of
their deep estimates to
attack the general case
of Conjecture \ref{GeneralConj}.
\end{rmk}

\begin{rmk}

It is interesting to note that Abreu \cite[\S5]{Abreu} 
and subsequently Atiyah--LeBrun \cite[(5.5)--(5.6)]{A-LeBrun} also
studied a family of Einstein metrics with orbifold/cone-edge singularities 
converging (roughly) to the Eguchi--Hanson
metric. There are, however, important differences between
their construction and ours: (i) their metrics are {\it not } K\"ahler,
(ii) they work on the space $\PP^2$ while we work on the second
Hirzebruch surface $\FF_2$ (which generalizes to $\FF_{n,k}$ in
higher-dimensions),
(iii) the metrics they construct in the sequence
are singular along a single $\PP^1$ divisor instead of two 
$\PP^1$ divisors, as in our construction,
(iv) their limit metric is not precisely the Eguchi--Hanson metric,
but rather its double cover (they obtain an angle of $4\pi$ along
a $\PP^1$ divisor). 
While our metrics are constructed directly 
from the Calabi ansatz as K\"ahler--Einstein edge metrics,
Abreu constructs extremal (non-Einstein) K\"ahler metrics
using toric formalism and then uses Derdzinki's theorem to conclude
that a conformal rescaling (dividing by the scalar curvature squared)
will be Einstein (but not K\"ahler).
LeBrun obtains the same family of metrics using a completely different
method specific to dimension 4.  
One could take the double cover of our construction.
That would have the effect of, roughly speaking,
the curve being pushed out to infinity becoming smooth in the limit 
as the angle there would tend to $2\pi$ (instead of $\pi$),
while the other angle tends to $4\pi$. But even then it is not clear how to relate 
our metrics that are K\"ahler--Einstein edge (and in particular K\"ahler) to 
the Abreu--Atiyah--LeBrun metrics, that are not K\"ahler.
\end{rmk}
Motivated by the previous remark, we pose:

\begin{prob}
Relate the Abreu--Atiyah--LeBrun metrics to ours.
\end{prob}

A common feature for footballs,
Hirzebruch surfaces, and Calabi--Hirzebruch manifolds is
that there are two smooth disjoint divisors and hence two angle parameters $\beta_1,\beta_2$.
In Section \ref{sec: model}, we review the construction of Calabi--Hirzebruch manifolds \cite{Calabi82, H51}, denoted by $\mathbb{F}_{n, k}$ for $\NN\ni n\geq 2$ and $k\in\mathbb{N}$, and define two (families of) model metrics. The first
are Ricci-flat edge metrics on the total space of the (non-compact) line bundle $-kH_{\mathbb{P}^{n-1}}$,
\begin{equation*}
\omega_{\mathrm{eh}, n, k},
\end{equation*}
and an edge singularity of the angle $2\pi n/k$ along $Z_{n, k}$.
The second are 
compact \KEE spaces with positive Ricci curvature $(n+1)/k$ and an edge singularity of angle $2\pi/k$,
\begin{equation*}
\omega_{\mathrm{orb}, n, k}, 
\end{equation*}
on the weighted projective space $\mathbb{P}^n(1,\dots, 1,k)$.
These two model spaces turn out to be the
different Gromov--Hausdorff limits
of large-angle 
limits of the KEE metrics we construct 
on the Calabi--Hirzebruch
manifolds.

\subsection{Large angle limits}

The following result describes precisely
the different large-angle limits that arise from these KEE
metrics by either using different pointed limits or else 
parametrizing the angles in different ways.
Note that $\beta_1$ ranges in $(0,n/k)$ and 
$\beta_2$ ranges in $(0,1/k)$.

\begin{thm}\label{thm: main}
Fix a base point $p$ on the zero section of $\mathbb{F}_{n, k}$ and $q$ on the infinity section. The pointed metric space $(\mathbb{F}_{n, k}, \eta_{\beta_1}, p)$ converges in the pointed Gromov--Hausdorff sense to $(-kH_{\mathbb{P}^{n-1}}, \omega_{\mathrm{eh}, n, k}, p)$ as $\beta_1$ tends to $n/k$. On the other hand, $(\mathbb{F}_{n, k}, \xi_{\beta_2}, q)$ converges in the pointed Gromov--Hausdorff sense to $(\mathbb{P}^n(1,\dots, 1, k), \omega_{\mathrm{orb}, n, k}, q)$ as $\beta_2\to 1/k$. 

\end{thm}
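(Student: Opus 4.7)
The plan is to exploit the Calabi ansatz of Section \ref{sec: general}, under which both $\eta_{\beta_1}$ and $\xi_{\beta_2}$ are determined on $\mathbb{F}_{n,k}\setminus(Z_{n,k}\cup Z_{n,-k})$ by a one-variable profile function solving a scalar ODE whose boundary conditions encode the two cone angles. Crucially, the two limiting models $\omega_{\mathrm{eh},n,k}$ and $\omega_{\mathrm{orb},n,k}$ of Section \ref{sec: model} arise from the very same ansatz in the degenerate regimes $\beta_1 = n/k$ and $\beta_2 = 1/k$ respectively. The overall strategy is to establish, in order: (i) smooth convergence of the profile functions on compact subintervals; (ii) smooth convergence of the metrics on compact subsets avoiding the ``disappearing'' divisor; and (iii) pointed Gromov--Hausdorff convergence.

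For the Eguchi--Hanson-type limit I would parametrize $\eta_{\beta_1}$ by a momentum variable $\tau$ adapted to the zero section, so that the fiber geometry is encoded by a profile $\Phi_{\beta_1}(\tau)$ defined on an interval $[0,T_{\beta_1}]$. Direct inspection of the ODE shows that $T_{\beta_1}\to +\infty$ as $\beta_1\to n/k$, while on every fixed subinterval $[0,T]$ one has $\Phi_{\beta_1}\to \Phi_{\mathrm{eh}}$ in $C^\infty$, where $\Phi_{\mathrm{eh}}$ is the profile of $\omega_{\mathrm{eh},n,k}$. Translating back, $\eta_{\beta_1}\to \omega_{\mathrm{eh},n,k}$ smoothly on every compact subset of $-kH_{\mathbb{P}^{n-1}}\cong \mathbb{F}_{n,k}\setminus Z_{n,-k}$ containing $p$. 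Coupled with an explicit integration of the length element along a fiber showing $\operatorname{dist}_{\eta_{\beta_1}}(p,Z_{n,-k})\to \infty$, this upgrades to pointed Gromov--Hausdorff convergence: balls of any fixed radius about $p$ are eventually disjoint from $Z_{n,-k}$ and converge smoothly to the corresponding balls in the Eguchi--Hanson model.

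The second convergence is dual. In the momentum variable adapted to the infinity section the profile $\Psi_{\beta_2}$ lives on a bounded interval whose length stays uniformly controlled as $\beta_2\to 1/k$, while the distance from $q$ to the collapsing zero section $Z_{n,k}$ converges to the finite distance from a generic smooth point of $\mathbb{P}^n(1,\dots,1,k)$ to its orbifold point. On every compact subset of $\mathbb{P}^n(1,\dots,1,k)$ avoiding the orbifold singularity, $\xi_{\beta_2}$ converges smoothly to $\omega_{\mathrm{orb},n,k}$. To pass to pointed Gromov--Hausdorff convergence I would construct explicit $\varepsilon$-approximations of balls $B(q,R)$ by gluing this smooth convergence on the complement of a small $\xi_{\beta_2}$-neighborhood of $Z_{n,k}$ with a volume/diameter bound showing that such a neighborhood Hausdorff-collapses onto the single orbifold point.

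The main obstacle will be the Gromov--Hausdorff step, not the smooth convergence on compacta: the KEE cone singularities are present throughout the degeneration, and the two convergences proceed by opposite mechanisms (fiber elongation and escape to infinity versus collapse of a divisor to a point). One must in particular check that the cone-angle singularity of $\eta_{\beta_1}$ along $Z_{n,k}$ converges in a metric sense to the cone-angle singularity of $\omega_{\mathrm{eh},n,k}$ along the same divisor, and rule out parasitic bubbling near either edge. Since the Calabi ansatz reduces the whole problem to a single explicit ODE, uniform $C^k$ estimates for $\Phi_{\beta_1}$ and $\Psi_{\beta_2}$ on compact subintervals, together with explicit integration of the length element up to each edge divisor, should suffice.
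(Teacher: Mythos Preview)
Your proposal is correct and follows essentially the same route as the paper: both reduce everything to the one-variable profile $\varphi(\tau)$ of the Calabi ansatz, take the limit of the ODE as $\beta_1\to n/k$ (respectively $\beta_2\to 1/k$) to obtain the explicit profiles of $\omega_{\mathrm{eh},n,k}$ and $\omega_{\mathrm{orb},n,k}$, use the fiberwise length integral $\int 1/\sqrt{\varphi}\,\textrm{d}\tau$ to show $\operatorname{dist}(p,Z_{n,-k})\to\infty$ in the first case and stays finite in the second, and upgrade smooth convergence on compacta to pointed Gromov--Hausdorff convergence. If anything, you are more scrupulous about the GH step (cone-singularity convergence, explicit $\varepsilon$-nets near the collapsing divisor) than the paper, which passes from local smooth convergence plus the distance dichotomy to pointed GH convergence rather tersely.
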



In particular, when $n=k=2$, Theorem \ref{thm: main}
resolves
Conjecture \ref{RZConj}.
Theorem \ref{thm: main}
amounts to saying that the famous Eguchi--Hanson metric from mathematical physics is the Gromov--Hausdorff limit of {\it compact} \KE edge metrics that we construct on the second Hirzebruch surface (see Remark \ref{rmk: eh as limit} for more details or see Appendix \ref{app: eh another discussion} for another proof). 
More generally, when $n=k$, Theorem \ref{thm: main} recovers a family of Ricci-flat metrics on the total space of canonical bundle of $\mathbb{P}^{n-1}$ that was constructed by Calabi \cite{Calabi79},
once again as a limit of 
{\it compact} Einstein spaces.
\begin{figure}[!htbp]
    \centering
    \includegraphics[width=9.3cm]{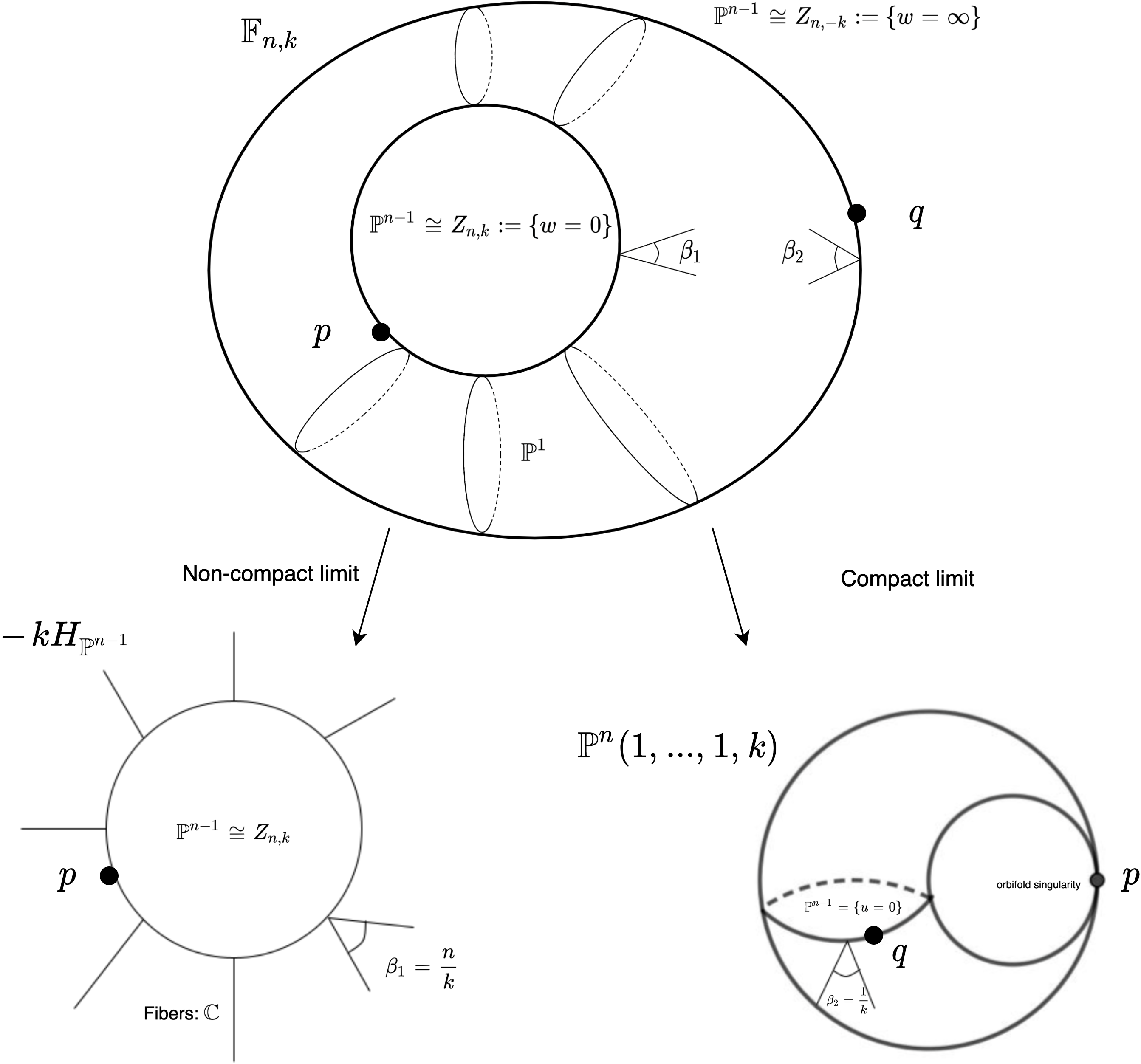}
    \caption{The upper part shows the K\"ahler edge structure on the Calabi--Hirzebruch manifold $\mathbb{F}_{n, k}$. When $n=2$, $\mathbb{F}_{2,k}$ is the $k$-th
    Hirzebruch surface and $Z_{2,\pm k}$ is a $\mp k$-curve. The lower part shows the different limits described in Theorem \ref{thm: main}. Note that on the left, we get a non-compact limit, with $q$ (together with all of $Z_{n,-k}$) pushed-out to infinity. On the right we get a compact limit, with $p$ limiting (together with all of $Z_{n,k}$) to an isolated orbifold point.}
    \label{fig:my_label}
\end{figure}
\begin{rmk}
It would be 
interesting to find physical
interpretations of Theorem 
\ref{thm: main} in the case 
$n=k=2$ of the Eguchi--Hanson
metric.
\end{rmk}

The elementary proof of Theorem \ref{thm: main} is divided into two parts. In section \ref{sec: asym of kee}, we study asymptotic behaviors of \KE edge metrics $\eta_{\beta_1}$ and $\xi_{\beta_2}$ when $\beta_1$, or respectively $\beta_2$, is close to $n/k$ or $1/k$. In section \ref{sec: gh kee}, we prove the convergence results by studying a family of ODEs that arises from the construction of $\eta_{\beta_1}$ and $\xi_{\beta_2}$.

\begin{rmk}\label{rmk_twocases}
The two families of \KE edge metrics $\eta_{\beta_1}$ and $\xi_{\beta_2}$ are
related via a simple, but important, rescaling. In Theorem \ref{thm: main}, we see
that different limits airse for those two family of metrics. The normalization factor can be obtained by studying the asymptotic behavior of $\xi_{\beta_2}$: see Proposition \ref{prop: length finite nkb2 case} for details.
\end{rmk}

\begin{rmk}
As noted before Theorem \ref{thm: main}, $\beta_1$ ranges in $(0,n/k)$ and 
$\beta_2$ ranges in $(0,1/k)$.
In proving Theorem \ref{thm: main}
for the family $\eta_{\beta_1}$
one obtains the asymptotic 
dependence of $\beta_2$ on $\beta_1$ in terms of a parameter
that controls the length of the 
fibers in the Hirzebruch fibration---see \eqref{Tbeta1beta2Eq}. This
shows that as $\beta_1$ tends
to its maximal value $n/k$, 
$\beta_2$ tends to its maximal
value as well, $1/k$, and 
the divisor $Z_{n,k}$ gets
pushed-off to infinity. 
It thus comes a bit as a surprise
that when we consider the family
$\xi_{\beta_2}$ parametrized in terms of $\beta_2$, and we let
$\beta_2$ tend towards $1/k$, 
while the parameter $\beta_1$ still tends towards its maximal
value $n/k$ we get a completely
different limiting behavior:
instead of a non-compact limit we get a compact limit via
a metric degeneration along $Z_{n, k}$.
Thus, studying the two families
is an essential feature of the setting and leads to two completely different Gromov--Hausdorff limits as in Theorem \ref{thm: main}.
It would be interesting to 
generalize this phenomenon to
other settings.
\end{rmk}

\subsection{Small angle limits}

Next, we resolve Conjecture \ref{GeneralConj} 
in the setting of $\mathbb{F}_{n, k}$. 
Denote by  $\widetilde{\eta_{\beta_1}}$ and
$\widetilde{\xi_{\beta_2}}$
fiber-wise rescalings of $\eta_{\beta_1}, \xi_{\beta_2},$ respectively
(see \eqref{eq: fibrescal} and \S\ref{Sec7}).

\begin{thm}\label{thm: main relation}
Both $(\mathbb{F}_{n, k}, \eta_{\beta_1})$ and 
$(\mathbb{F}_{n, k}, \xi_{\beta_2})$ converge in the Gromov--Hausdorff sense to 
$(\mathbb{P}^{n-1}, k\omega_{\operatorname{FS}})$ as $\beta_1$ or $\beta_2$ 
tends to $0$. Moreover,
as $\beta_1\searrow 0$, $(\mathbb{F}_{n, k}, \widetilde{\eta_{\beta_1}}, p)$ converges in the pointed Gromov--Hausdorff sense to $(\mathbb{P}^{n-1}\times \mathbb{C}^*, \frac{k}{n}(n\pi_1^*\omega_{\operatorname{FS}}+\pi_2^*\omega_{\operatorname{Cyl}}), p)$
and similarly for $(\mathbb{F}_{n, k}, \widetilde{\xi_{\beta_2}}, q)$,
where $p,q\in \mathbb{F}_{n, k}$ are as in Theorem \ref{thm: main}.

\end{thm}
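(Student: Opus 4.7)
The plan is to combine the Calabi ansatz reduction developed in Section \ref{sec: general} with a direct ODE limit analysis as $\be_1\ra 0$. Recall that the Calabi ansatz presents $\eta_{\be_1}$ as a horizontal $k\omega_{\operatorname{FS}}$-type term plus a warped vertical piece governed by a single profile $\varphi_{\be_1}$ on a moment interval of length $T_{\be_1}$ (see \eqref{Tbeta1beta2Eq}); all relevant geometric quantities---fiber length, horizontal warping factor, curvature, and rescaling constants---are encoded in $\varphi_{\be_1}$ and its derivatives.

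For the un-rescaled assertion, the key step is to show that $T_{\be_1}\ra 0$ as $\be_1\ra 0$, so that the $S^1$-quotient of each fiber (and hence the whole $\PP^1$ fiber) collapses in diameter, while simultaneously the horizontal warping multiplier on $\PP^{n-1}$ tends uniformly to $1$. Both statements follow by inspection of the $\be_1$-dependence of the Calabi ODE boundary data set up in Section \ref{sec: general}. Combined with a uniform diameter bound, the Hirzebruch projection $\pi\colon(\FF_{n,k},\eta_{\be_1})\ra(\PP^{n-1},k\omega_{\operatorname{FS}})$ becomes an $\varepsilon_{\be_1}$-Gromov--Hausdorff approximation, which yields the claim; the case of $\xi_{\be_2}$ is identical after invoking the parameter dictionary of Remark \ref{rmk_twocases}.

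For the rescaled pointed statement, the fiber-wise rescaling in \eqref{eq: fibrescal} is designed to normalize the vertical part of $\eta_{\be_1}$ by $T_{\be_1}^{-2}$, stretching the collapsing fibers to pieces of fixed positive size while leaving $k\omega_{\operatorname{FS}}$ on the base essentially unchanged. The natural variable for the analysis is the logarithmic fiber coordinate $t$ on a trivialization of $-kH_{\PP^{n-1}}\setminus Z_{n,-k}$ around $p$. In the $t$-variable the rescaled ODE becomes autonomous in the limit $\be_1\ra 0$, and its solution converges locally uniformly on compact intervals to the flat cylindrical profile whose prefactor is precisely $k/n$---this is the coefficient dictated by the Calabi--Yau scaling along the fibers predicted by Conjecture \ref{GeneralConj}. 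A metric ball of fixed radius around $p$ in $\widetilde{\eta_{\be_1}}$ then sits inside a compact piece of $\PP^{n-1}\times\CC^*$ in these coordinates, so smooth compact-set convergence of the profile upgrades to pointed Gromov--Hausdorff convergence to the stated product limit, and the case of $\widetilde{\xi_{\be_2}}$ centered at $q$ is completely analogous.

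The principal technical hurdle is to convert the ODE-level convergence into genuine pointed Gromov--Hausdorff convergence on rescaled metric balls whose radius is unbounded: this requires uniform two-sided control of the rescaled profile up to each singular endpoint of the log-coordinate interval, where the edge behavior at $Z_{n,k}$ and at $Z_{n,-k}$ must be handled separately and asymmetrically, mirroring the asymmetry already present in the large-angle limits of Theorem \ref{thm: main}. The expected route is to exploit the monotonicity of the moment variable together with the explicit $\be_1$-dependence of the Calabi potential to produce the uniform $C^0$ bounds on the rescaled profile that rule out any degeneration beyond the cylindrical model.
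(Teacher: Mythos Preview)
Your outline follows the paper's approach closely: show the moment interval shrinks so the fibers collapse to give $(\PP^{n-1},k\omega_{\operatorname{FS}})$, then rescale the fiber direction and pass to a cylindrical limit. A few points deserve sharpening.

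First, a notational issue: you write that the moment interval has ``length $T_{\be_1}$'' and then cite \eqref{Tbeta1beta2Eq}, but in that identity $T$ is the \emph{right endpoint} of the normalized interval $[1,T]$, not its length. What you need is $T\ra 1$ (equivalently $T-1\ra 0$), and this is \emph{not} quite ``by inspection of the boundary data'': the paper's argument (Theorem \ref{thm: b1 0 case}) factors the limiting profile $\varphi_0$ and checks it has no root larger than $1$, forcing $T\ra 1$ by continuity of roots. For the rescaled statement one needs the sharper asymptotic $T-1=\tfrac{2k}{n}\be_1+o(\be_1)$ (Lemma \ref{lem: b1Tasy}), which in turn pins down the rescaling factor $\be_1^{-2}$ in \eqref{eq: fibrescal} and the eventual constant $k/n$. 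Your sketch asserts this constant appears but does not indicate the computation that produces it; the paper obtains it via the affine change of variable \eqref{eq: newcoord} centered at the midpoint of $[1,T]$ and a second-order Taylor expansion of $\varphi$, yielding $\varphi/\be_1^2\ra k/(2n)$ on compact $x$-sets.

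Second, your final paragraph overstates the difficulty of the pointed Gromov--Hausdorff step. You propose to obtain ``uniform two-sided control of the rescaled profile up to each singular endpoint,'' but this is neither needed nor what the paper does. The point is simpler: under $\widetilde{\eta_{\be_1}}$, the distances from the chosen basepoint to \emph{both} $Z_{n,k}$ and $Z_{n,-k}$ tend to infinity (the basepoint in the paper's proof sits on the middle section $\{x=0\}$, not on $Z_{n,k}$). Hence any fixed-radius ball eventually avoids both divisors entirely, and local smooth convergence of the profile on compact $x$-intervals already gives pointed Gromov--Hausdorff convergence to $\PP^{n-1}\times\CC^*$ with the product metric. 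No endpoint analysis is required, and the asymmetry you anticipate between the two ends does not enter. The $\xi_{\be_2}$ case is then indeed parallel via Lemmas \ref{lem: b2tasy}--\ref{lem: b1b2s}.
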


A compendium of the limit theorems in \cite{RZ20, RZ21} and the 
present work is shown in Table 1.
{
\begin{table}[htbp]
\hspace{-2em}
\resizebox{1.1\textwidth}{!}{
\begin{tabular}{|c|c|c|c|c|c|c|}
\hline
$n$ & $k$ & $\beta_1$ & $\beta_2$ & Sequence & Limit & Reference \\
\hline\hline
$1$ & $k$ & $\beta_1\searrow 0$ & $\beta_2(\beta_1)\equiv\beta_1$ & football metrics &$(\mathbb{C}^*, \omega_{\mathrm{Cyl}})$ & folklore, \cite[Thm 1.3]{RZ20}\\
\hline
$1$ & $k$ & $\beta_1\nearrow 1$ & $\beta_2(\beta_1)\equiv\beta_1$ & football metrics & $\left(\mathbb{P}^1, \omega_{\operatorname{FS}}\right)$ & folklore, \cite[Thm 1.2]{RZ20}\\
\hline 
$2$ & $k$ & $\beta_1\searrow 0$ & $\beta_2(\beta_1)=\beta_1+O(\beta_1^2)$ & KEE metrics & $\left(\mathbb{P}^1, k\omega_{\operatorname{FS}}\right)$ & \cite[Thm 1.2]{RZ21}\\
\hline
$2$ & $k$ & $\beta_1\searrow 0$ & $\beta_2(\beta_1)=\beta_1+O(\beta_1^2)$ & rescaled KEE metrics & $\left(\mathbb{P}^1\times \mathbb{C}^*, k\pi_1^*\omega_{\operatorname{FS}}+k\pi_2^*\omega_{\operatorname{Cyl}}\right)$ & \cite[Thm 1.2]{RZ21}\\
\hline
$2$ & $1$ & $\beta_1\to 1$ & $\beta_2(\beta_1)\to \sqrt{3}-1$ & KEE metrics & KEE metric on $(\mathbb{F}_1, Z_{-1})$ & \cite[Cor 1.5]{RZ21}\\
\hline 
$2$ & $1$ & $\beta_1\nearrow 2$ & $\beta_2(\beta_1)\nearrow 1$ & KEE metrics & Ricci-flat metric on $(-H_{\mathbb{P}^1}, Z_1)$ & Thm \ref{thm: limit n2k1 case}\\
\hline 
$2$ & $1$ & the metric degenerates on $Z_1$ & $\beta_2\nearrow 1$ & KEE metrics & $(\mathbb{P}^2, \omega_{\operatorname{FS}})$& Thm \ref{thm: appthm2}.\\
\hline
$2$ & $1$ & $\beta_1(\beta_2)\nearrow 2$ & $\beta_2\nearrow 1$ & rescaled KEE metrics & Ricci-flat metric on $(-H_{\mathbb{P}^1}, Z_1)$ & Thm \ref{thm: rescale simcase}\\
\hline
$2$ & $2$ & $\beta_1\nearrow 1$ & $\beta_2(\beta_1)\nearrow 2$ & KEE metrics & Eguchi--Hanson metric ($\epsilon=1$) & Thm \ref{thm: metric limit nk case}\\
\hline
$2$ & $k$ & the metric degenerates on $Z_{2, k}$ & $\beta_2(\beta_1)\nearrow 1/k$ & KEE metrics & $(\mathbb{P}^2(1,1,k), \omega_{\mathrm{orb},2,k})$ & Thm \ref{thm: conv mod2}\\
\hline
$n$ & $1$ & the metric degenerates on $Z_{n, 1}$ & $\beta_2(\beta_1)\nearrow 1$ & KEE metrics & $(\mathbb{P}^n, \omega_{\mathrm{FS}})$ & Thm \ref{thm: conv mod2}\\
\hline
$n$ & $k$ & $\beta_1\nearrow n/k$ & $\beta_2(\beta_1)\nearrow 1/k$ & KEE metrics & $(-kH_{\mathbb{P}^{n-1}}, \omega_{\mathrm{eh}, n, k})$ & Thm \ref{thm: metric limit nk case}\\
\hline 
$n$ & $k$ & $\beta_1\searrow 0$ & $\beta_2(\beta_1)=\beta_1+O(\beta_1^2)$ & rescaled KEE metrics & $(\mathbb{P}^{n-1}\times \mathbb{C}^*, \frac{k}{n}(n\pi_1^*\omega_{\operatorname{FS}}+\pi_2^*\omega_{\operatorname{Cyl}}))$ & Thm \ref{thm: ResB1Case}\\
\hline
$n$ & $k$ & the metric degenerates on $Z_{n,k}$ & $\beta_2\nearrow 1/k$ & KEE metrics & $(\mathbb{P}^{n}(1,\dots,1,k), \omega_{\mathrm{orb}, n, k})$ & Thm \ref{thm: conv mod2}\\
\hline 
$n$ & $k$ & $\beta_1(\beta_2)\nearrow n/k$ & $\beta_2\nearrow 1/k$ & rescaled KEE metrics & $(-kH_{\mathbb{P}^{n-1}}, \omega_{\mathrm{eh}, n, k})$ & Cor \ref{thm: ResB2}\\
\hline 
$n$ & $k$ & $\beta_1(\beta_2)=\beta_2+O(\beta_2^2)$ & $\beta_2\searrow 0$ & rescaled KEE metrics & $(\mathbb{P}^{n-1}\times \mathbb{C}^*, \frac{k}{n}(n\pi_1^*\omega_{\operatorname{FS}}+\pi_2^*\omega_{\operatorname{Cyl}}))$ & Thm \ref{thm: ResB2Zero}\\
\hline
\end{tabular}
}
\caption{Limits of \KEE metrics on $\mathbb{F}_{n, k}$, the $k$-th Calabi--Hirzebruch manifold of dimension $n$.}
\label{summary}
\end{table}
}
\subsection{Organization}
In Section \ref{sec: model}, we first review the construction of Calabi--Hirzebruch manifolds and then define two families of model metrics respectively on the total space of line bundles $-kH_{\mathbb{P}^{n-1}}$ and the Calabi--Hirzebruch manifolds. In Section \ref{sec: general}, we construct \KE edge metrics on the Calabi--Hirzebruch manifolds following Calabi ansatz. In Section \ref{sec: asym of kee}, we study the asymptotic behaviors of \KE edge metrics by reducing it to the study of some ODEs. In Section \ref{sec: gh kee}, we consider the Gromov--Hausdorff limits of the \KE edge metrics on the Calabi--Hirzebruch manifolds and find out the model metrics in the limit.  Theorem \ref{thm: main} is then proved in two parts: Theorem \ref{thm: metric limit nk case} and Theorem \ref{thm: conv mod2}. 
The first statement of Theorem \ref{thm: main relation} about the non-rescaled limits
is proved in Theorems \ref{thm: b1 0 case} and \ref{thm: b2 0 case}.
The second statement of Theorem \ref{thm: main relation} concerning 
rescaled limits and pointed limits is contained in Theorems \ref{thm: ResB1Case}
and \ref{thm: ResB2Zero}. We also discuss the relation between Theorem \ref{thm: metric limit nk case} and Theorem \ref{thm: conv mod2} as mentioned in Remark \ref{rmk_twocases} in Corollary \ref{thm: ResB2}. 

Finally, we end with several appendices whose main purpose is to bridge the mathematical,
or ``Calabi's" formalism, and the physics, or ``Eguchi--Hanson" setting. These 
should make the article accessible to the physics audience, as well as introduce
some of the physics motivation for the article to the mathematical audience. 
Appendices \ref{app: review EH} and \ref{app: eh another discussion} provide a brief review on basic properties of Eguchi--Hanson metrics and explain how to understand Eguchi--Hanson metrics as Gromov--Hausdorff limits of \KE edge metrics. 
In Appendix \ref{app: more eg}, we provide more examples of \KE edge metrics and their Gromov--Hausdorff limit metrics. 

\paragraph{Acknowledgments.} 

Research supported by NSF grant 1906370, NSFC grant 12101052, the Fundamental Research Funds 2021NTST10 for the Central Universities, and a Brin Graduate
Fellowship at the University of
Maryland.
YAR thanks C. LeBrun for informing him of \cite{Abreu,A-LeBrun}.
The authors are grateful to the referees for their careful reading and comments.

This article is dedicated to Scott Wolpert that aside from
his seminal mathematical contributions to complex geometry
has been instrumental in forging the University of Maryland
as a leader in that field, and has positively impacted the 
careers of all three authors
as either graduate students
or faculty in the department
(and, in particular, in all likelihood this article
would not have appeared
had it not been for his constant support).

\section{Model metrics}\label{sec: model}

We first review the construction of Calabi--Hirzebruch manifolds \cite{Calabi82, H51}.

\begin{defi}\label{defi: CH manifolds}
The Calabi--Hirzebruch manifold, denoted by $\mathbb{F}_{n, k}$, where $n, k\in\mathbb{N}$, is defined as follows:
\begin{equation*}
    \mathbb{F}_{n, k}:=\mathbb{P}(-kH_{\mathbb{P}^{n-1}}\oplus\mathbb{C}_{\mathbb{P}^{n-1}}),\quad n, k\in\mathbb{N},
\end{equation*}
where $H_{\mathbb{P}^{n-1}}$ is the hyperplane bundle over $\mathbb{P}^{n-1}$ and $\mathbb{C}_{\mathbb{P}^{n-1}}$ is the trivial one. 

An alternative way to define $\mathbb{F}_{n, k}$ is by adding an infinity section to the blow up of $\mathbb{C}^n/\mathbb{Z}_k$ at the origin \cite[Lemma 2.1]{RZ21}.
\end{defi}

In this section, we introduce two model metrics $\omega_{\mathrm{eh}, n, k}$ and $\omega_{\mathrm{orb}, n, k}$, that are defined on the total space of line bundle $-kH_{\mathbb{P}^{n-1}}$ and 
on the weighted projective space
$\mathbb{P}^{n}(1,\dots,1,k)$, 
respectively. They will serve as the candidates of limit metrics in the latter sections. 

Denote by $[Z_1:\cdots:Z_n]$ the homogeneous coordinates on $\mathbb{P}^{n-1}$. Working on the chart $\{Z_i\neq 0\}$, we shall use the nonhomogeneous coordinates $z_j:=Z_j/Z_i$ for all $j\neq i$. Denote by $w$ the coordinate along each fiber on $-kH_{\mathbb{P}^{n-1}}$ or $\mathbb{F}_{n, k}$. Then $w\in\mathbb{C}$ for $-kH_{\mathbb{P}^{n-1}}$ and $w\in \mathbb{C}\cup \{\infty\}$ for $\mathbb{F}_{n, k}$. In particular, we have two divisors on $\mathbb{F}_{n, k}$: the zero section
\begin{equation*}
    Z_{n, k}:=\{w=0\}
\end{equation*}
and the infinity section
\begin{equation*}
    Z_{n, -k}:=\{w=\infty\}.
\end{equation*}
Consider the Hermitian norm on $-kH_{\mathbb{P}^{n-1}}$ (and also $\mathbb{F}_{n, k}$) 
\begin{equation*}
    ||(z_1,\dots, \hat{z}_i,\dots, z_n, w)||:=|w|^2\left(1+\sum_{j\neq i}^{n}|z_j|^2\right)^k,
\end{equation*}
on the chart $\{Z_i\neq 0\}$. Let $s$ be the logarithm of this fiberwise norm, i.e.,
\begin{equation}\label{eq: def s}
    s:=\log|w|^2+k\log\left(1+\sum_{j\neq i}|z_j|^2 \right).
\end{equation}
Next, we define model metrics that depend only on $s$ on $-kH_{\mathbb{P}^{n-1}}$ and $\mathbb{P}^n(1,\dots, 1, k)$.

\subsection{Non-compact case: model metrics on $-kH_{\mathbb{P}^{n-1}}$}
The first model metric, $\omega_{\mathrm{eh}, n, k}$, is a Ricci-flat edge metric with edge singularity of angle $2n\pi/k$ along the zero section $Z_{n, k}$ of $-kH_{\mathbb{P}^{n-1}}$. In particular, when $n=k$, this metric is smooth. Indeed, when $n=k$, $\omega_{\mathrm{eh}, n, k}$ coincides with the Ricci-flat metric on the canonical bundle of $\mathbb{P}^{n-1}$ that was constructed by Calabi \cite{Calabi79}.

\begin{defi}
Let $s$ be defined in \eqref{eq: def s}. To define $\omega_{\mathrm{eh}, n, k}$, introduce another coordinate $\lambda\in(1, +\infty)$ such that 
\begin{equation}\label{eq: lbd for c1}
    \lambda=(e^{\frac{n}{k}s}+1)^{\frac{1}{n}}.
\end{equation}
Thus $\{\lambda=1\}$ corresponds to the zero section $Z_{n, k}=\{w=0\}$ on $-kH_{\mathbb{P}^{n-1}}$. We define $\omega_{\mathrm{eh}, n, k}$ by giving its potential function as follows:
\begin{equation}\label{eq: poten c1}
    f(\lambda):=k\lambda +k\int\frac{1}{\lambda^n-1}\;\textrm{d}\lambda,
    \end{equation}
where the last term denotes an indefinite integral, and 
\begin{equation*}
\omega_{\mathrm{eh}, n, k}:=\sqrt{-1}\partial\bar{\partial}f(s).
\end{equation*}
The metric $\omega_{\mathrm{eh}, n, k}$ is a Ricci-flat metric on $-kH_{\mathbb{P}^{n-1}}$ with edge singularity of angle $2n\pi/k$ along the zero section (See Theorem \ref{thm: metric limit nk case} for details). It can be seen as a generalization of Eguchi--Hanson metrics to higher dimensional manifolds (See Remark \ref{rmk: eh spec case} for details). More precisely, in local coordinates, 
\begin{align*}
    \omega_{\mathrm{eh}, n, k}=k (e^{\frac{n}{k}s}+1)^{\frac{1}{n}} \pi_1^*\omega_{\operatorname{FS}}+\frac{e^{\frac{n}{k}s}}{k(e^{\frac{n}{k}s}+1)^{\frac{n-1}{n}}}&\left(\frac{\sqrt{-1}\textrm{d}w\wedge\textrm{d}\bar{w}}{|w|^2}+\sqrt{-1}\alpha\wedge\bar{\alpha}\right.+\\
    &\left.\sqrt{-1}\alpha\wedge\frac{\textrm{d}\bar{w}}{\bar{w}}+\sqrt{-1}\frac{\textrm{d}w}{w}\wedge\bar{\alpha}\right),
\end{align*}
where 
\begin{equation*}
    \alpha=k\frac{\sum_{i\neq j}\bar{z}_i\textrm{d}z_i}{1+\sum_{i\neq j}|z_i|^2},\quad \textup{on the chart}\;\{z_j\neq 0\},
\end{equation*}
and
\begin{equation*}
    \pi_1([Z_1:\cdots:Z_n], w)=[Z_1:\cdots:Z_n]
\end{equation*}
is the projection map from the total space $-kH_{\mathbb{P}^{n-1}}$ to the base space $\mathbb{P}^{n-1}$.
\end{defi}

\begin{rmk}\label{rmk: eh spec case}
Fixing $n=k=2$ in \eqref{eq: lbd for c1} and \eqref{eq: poten c1}, we obtain that on $-2H_{\mathbb{P}^1}$ the model metric $\omega_{\mathrm{eh}, n, k}$ has the expression
\begin{equation}\label{eq: mod sim eh}
    \sqrt{-1}\partial\bar{\partial}(2\lambda+\log(\lambda-1)-\log(\lambda+1)),\quad \lambda>1.
\end{equation}
Set
\begin{equation}\label{eq: def r for eh}
    r:=e^{\frac{1}{4}s},\quad r>0.
\end{equation}
Plugging \eqref{eq: lbd for c1} and \eqref{eq: def r for eh} in \eqref{eq: mod sim eh}, one finds $\omega_{\mathrm{eh}, n, k}$ has the following expression on $-2H_{\mathbb{P}^1}$:
\begin{equation}\label{eq: der eh}
    \sqrt{-1}\partial\bar{\partial}[\sqrt{r^4+1}+\log r^2-\log(\sqrt{r^4+1}+1)].
\end{equation}
\eqref{eq: der eh} coincides with \eqref{eq: eh metric potential form} in the Appendix where we put $\epsilon=1$. In other words, when $n=k=2$, $\omega_{\mathrm{eh}, n, k}$ recovers the famous Eguchi--Hanson metric \cite{EH79} that is constructed on the total space of $-2H_{\mathbb{P}^1}$.

It is recalled in Proposition \ref{prop: eh ricflat} that the Eguchi--Hanson metric is Ricci-flat. The metric $\omega_{\mathrm{eh}, n, k}$, which is also Ricci-flat but with edge singularities along $Z_{n, k}$ when $n\neq k$, can be seen as a generalization of Eguchi--Hanson metrics to the total space of $-kH_{\mathbb{P}^{n-1}}$ for arbitrary $n, k\in\mathbb{N}_{>0}$.
\end{rmk}

\subsection{Compact case: model metrics on $\mathbb{P}^{n}(1,\dots, 1,k)$}

In this section, we introduce another model metric $\omega_{\mathrm{orb}, n, k}$ that is defined on the weighted projective space
$\mathbb{P}^{n}(1,\dots,1,k)$. We first recall the construction of the weighted projective space and its orbifold structure.

\begin{defi}
For $k\in\mathbb{N}$, consider the group action of $\mathbb{C}^*$ on $\mathbb{C}^{n+1}\setminus\{0\}$ given by 
\begin{equation*}
    \lambda \cdot (z_0,\dots,z_{n-1},z_n) = (\lambda z_0,\dots, \lambda z_{n-1},\lambda^k z_n),\quad \lambda\in\mathbb{C}^*,\; (z_0,\dots, z_{n-1}, z_n) \in \mathbb{C}^{n+1}\setminus \{0\}.
\end{equation*}
Then the weighted projective space $\mathbb{P}^{n}(1,\dots,1,k)$ is defined as the quotient of this group action:
\begin{equation*}
    \mathbb{P}^{n}(1,\dots,1,k) := (\mathbb{C}^{n+1}\setminus\{0\})/\mathbb{C}^*.
\end{equation*}
We use homogeneous coordinates on $\mathbb{P}^n(1,\dots, 1,k)$. A point $[x_0:\cdots:x_{n-1}:x_n]\in\mathbb{P}^n(1,\dots,1,k)$ corresponds to an equivalence class in $\mathbb{C}^{n+1}\setminus \{0\}$. More precisely, 
\begin{equation}\label{eq_PonWps}
    [x_0:\cdots:x_{n-1}:x_n] := \{\lambda x_0,\dots, \lambda x_{n-1}, \lambda^k x_{n}: \lambda\in\mathbb{C}^*\}.
\end{equation}
\end{defi}

Consider the $\mathbb{Z}_k$ action on $\mathbb{C}^n$ such that the $\mathbb{Z}_k$ orbit of a point $(z_1,\dots, z_n)\in\mathbb{C}^n$ is
\begin{equation*}
    \{e^{\frac{2\pi\sqrt{-1}\ell}{k}}z_1,\dots, e^{\frac{2\pi\sqrt{-1}\ell}{k}}z_n: \ell = 0,\dots, k-1\}.
\end{equation*}

Then near the point $[0:0\cdots:0:1]\in\mathbb{P}^n(1,\dots, 1,k)$ the local structure is $\mathbb{C}^n/\mathbb{Z}_k$. The next lemma shows the relation between $\mathbb{P}^n(1,\dots,1,k)$ and $\mathbb{F}_{n, k}$.

\begin{lem}\label{lem_WpsAndCh}
The total space of the line bundle $kH_{\mathbb{P}^{n-1}}$ can be embedded in the weighted projective space $\mathbb{P}^{n}(1,\dots,1,k)$ and the complement is a single point $p=[0:\cdots:0:1]\in\mathbb{P}^n(1,\dots,1,k)$. Moreover, $\mathbb{F}_{n, k}$ is the blow up of $\mathbb{P}^{n}(1,\dots,1,k)$ at $p$.
\end{lem}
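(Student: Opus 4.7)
}

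The plan is to prove the two assertions in sequence: first produce an explicit biregular identification of $\mathbb{P}^n(1,\dots,1,k)\setminus\{p\}$ with the total space of $kH_{\mathbb{P}^{n-1}}$, and then carry out a local computation of the blow-up at the orbifold point $p$ to recognize $\mathbb{F}_{n,k}$ as the resulting $\mathbb{P}^1$-bundle.

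For the first part, I would use the rational map
\[
\pi:\mathbb{P}^n(1,\dots,1,k)\dashrightarrow \mathbb{P}^{n-1},\qquad
[z_0:\cdots:z_{n-1}:z_n]\mapsto [z_0:\cdots:z_{n-1}],
\]
which is well-defined precisely off $p=[0:\cdots:0:1]$. On an affine chart $\{z_i\neq 0\}$, normalize $z_i=1$; the remaining data are $(w_j)_{j\neq i}=(z_j/z_i)$ together with $y_i:=z_n/z_i^k$, since the $\mathbb{C}^*$-action forces the transformation $y_i\mapsto y_j=y_i\,(z_i/z_j)^k=y_i\,(1/w_j)^k$ on the overlap $\{z_i\neq 0\}\cap\{z_j\neq 0\}$. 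These are exactly the transition cocycles of $\mathcal{O}_{\mathbb{P}^{n-1}}(k)=kH_{\mathbb{P}^{n-1}}$, so $\pi$ exhibits $\mathbb{P}^n(1,\dots,1,k)\setminus\{p\}$ as the total space of $kH_{\mathbb{P}^{n-1}}$, with $y_i$ the fiber coordinate and the zero section being $\{z_n=0\}\cong\mathbb{P}^{n-1}$.

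For the second part, I would zoom in on $p$. In the chart $\{z_n\neq 0\}$ one normalizes $z_n=1$ using $\lambda$ with $\lambda^k=1/z_n$, so $\lambda$ is determined up to a $k$-th root of unity; the residual $\mathbb{Z}_k$-action $(z_0,\dots,z_{n-1})\mapsto(\zeta z_0,\dots,\zeta z_{n-1})$ (with $\zeta^k=1$) identifies this chart with $\mathbb{C}^n/\mathbb{Z}_k$, and $p$ with the origin. The blow-up commutes with the (free outside origin) $\mathbb{Z}_k$-action: blow up $\mathbb{C}^n$ at $0$ to obtain the total space $-H_{\mathbb{P}^{n-1}}$, extend the $\mathbb{Z}_k$-action (it fixes the exceptional $\mathbb{P}^{n-1}$ and acts by $k$-th roots of unity on the normal fiber), and take the quotient. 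The fiberwise map $z\mapsto z^k$ then identifies $(-H_{\mathbb{P}^{n-1}})/\mathbb{Z}_k$ with $(-H_{\mathbb{P}^{n-1}})^{\otimes k}=-kH_{\mathbb{P}^{n-1}}$. Thus a neighborhood of the exceptional divisor in the blow-up $\widetilde{\mathbb{P}^n(1,\dots,1,k)}$ is biholomorphic to the total space of $-kH_{\mathbb{P}^{n-1}}$, with exceptional divisor the zero section.

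To conclude, I would patch: the blow-up is covered by the open sets $V_1=\widetilde{\mathbb{P}^n(1,\dots,1,k)}\setminus E\cong kH_{\mathbb{P}^{n-1}}$ (from the first part) and $V_2=$ a neighborhood of $E$, which is $-kH_{\mathbb{P}^{n-1}}$ (from the blow-up computation). On the overlap $V_1\cap V_2$, which is the common complement of both zero sections, the fiber coordinates $y$ and $\tilde y$ are inverses of one another up to the natural duality pairing $kH\otimes(-kH)\cong\mathcal{O}$. This is exactly the cocycle defining the $\mathbb{P}^1$-bundle $\mathbb{P}(\mathcal{O}_{\mathbb{P}^{n-1}}\oplus -kH_{\mathbb{P}^{n-1}})=\mathbb{F}_{n,k}$, with $Z_{n,k}$ corresponding to $E$ and $Z_{n,-k}$ to the hyperplane divisor $\{z_n=0\}$ of the original weighted projective space.

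The main technical obstacle is the blow-up step: one must check carefully that the $\mathbb{Z}_k$-equivariant blow-up of $\mathbb{C}^n$ descends to the blow-up of the quotient orbifold and that the normal bundle to the exceptional divisor is indeed $-kH_{\mathbb{P}^{n-1}}$ (rather than $-H_{\mathbb{P}^{n-1}}$ or $(-H)^{1/k}$). This is essentially a careful bookkeeping of transition functions and the $k$-th power identification $(-H)^{\otimes k}\cong -kH$; once this is nailed down, the gluing to obtain $\mathbb{F}_{n,k}$ is automatic.
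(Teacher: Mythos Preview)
Your argument is correct and follows the same overall strategy as the paper: identify $\mathbb{P}^n(1,\dots,1,k)\setminus\{p\}$ with the total space of $kH_{\mathbb{P}^{n-1}}$ via transition cocycles, and then analyse the blow-up at the orbifold point. The execution differs in one place. For the second step, the paper first invokes the identity $\mathbb{P}(A\otimes L)\cong\mathbb{P}(A)$ to rewrite $\mathbb{F}_{n,k}$ as $\mathbb{P}(kH_{\mathbb{P}^{n-1}}\oplus\mathcal{O})$; then the blow-up is simply described as ``adding an infinity section to $kH_{\mathbb{P}^{n-1}}$'' without ever computing the normal bundle of the exceptional divisor. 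You instead carry out the $\mathbb{Z}_k$-equivariant blow-up of $\mathbb{C}^n$ explicitly, identify $(-H_{\mathbb{P}^{n-1}})/\mathbb{Z}_k\cong -kH_{\mathbb{P}^{n-1}}$ via the fiberwise $k$-th power, and then glue the two line-bundle charts by hand. Your route is more hands-on and makes the normal-bundle identification (the point you flag as the ``main technical obstacle'') completely transparent; the paper's route is slicker but leaves that identification implicit. Either way one lands on the same $\mathbb{P}^1$-bundle, with your $E=Z_{n,k}$ and $\{z_n=0\}=Z_{n,-k}$ matching the paper's conventions after the zero/infinity swap induced by the $\mathbb{P}(A\otimes L)$ isomorphism.
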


\begin{proof}
Since for any vector bundle $A$ and line bundle $L$ we have $\mathbb{P}(A\otimes L)=\mathbb{P}(A)$,  it follows
that $\mathbb{F}_{n, -k}$ is biholomorphic to $\mathbb{F}_{n, k}$ by taking $L=2kH_{\mathbb{P}^{n-1}}$ in Definition \ref{defi: CH manifolds} with the biholomorphism exchanging the
zero and the infinity sections $Z_{n, k}$ and $Z_{n, -k}$. Thus, we identify $\mathbb{F}_{n, k}$ as 
\begin{equation}\label{eq_ExCh}
    \mathbb{F}_{n, k} = \mathbb{P}(kH_{\mathbb{P}^{n-1}}\oplus \mathbb{C}_{\mathbb{P}^{n-1}}).
\end{equation}
We first show the total space of $kH_{\mathbb{P}^{n-1}}$ can be naturally embedded in $\mathbb{P}^n(1,\dots,1,k)$.

Consider $\mathbb{P}^{n}(1,\dots,1,k)$ with homogeneous coordinate $[x_0:\cdots, x_n]$ and embed $\mathbb{P}^{n-1}$ in $\mathbb{P}^n$ as $\{x_n=0\}$. Recall the transition function at a point $[x_0:\cdots:x_{n-1}]\in\mathbb{P}^{n-1}$ of the line bundle $kH_{\mathbb{P}^{n-1}}$ is given by 
\begin{equation}\label{eq_TranFunc}
    g_{ij}([x_0:\cdots:x_{n-1}]) = \left(\frac{x_j}{x_i}\right)^k.
\end{equation}
Then the fiber of $kH_{\mathbb{P}^{n-1}}$ at an arbitrary point $[x_0:\cdots,x_{n-1}:0]\in\mathbb{P}^{n}(1,\dots,1,k)$ can be identified as the set of all $[x_0:\cdots:x_{n-1}:\lambda]$ for $\lambda\in\mathbb{C}$, where $[x_0:\cdots:x_{n-1}:\lambda]$ is defined in \eqref{eq_PonWps}. By definition of the weighted projective space and \eqref{eq_TranFunc}, this is well-defined. Thus, $kH_{\mathbb{P}^{n-1}}$ can be naturally embedded in $\mathbb{P}^n(1,\dots, 1,k)$ and the complement is the point $[0:\cdots:0:1]$. We have mentioned that the local structure of $\mathbb{P}^n(1,\dots, 1, k)$ near this point is $\mathbb{C}^n/\mathbb{Z}_k$. 

Next, we blow up $\mathbb{P}^n(1,\dots,1,k)$ at $[0:\cdots:0:1]$. The exceptional divisor can be identified as adding an infinity section that is biholomorphic to $\mathbb{P}^{n-1}$ to the total space $kH_{\mathbb{P}^{n-1}}$. Combining this observation with \eqref{eq_ExCh}, one realizes the manifold upstairs is the Calabi--Hirzebruch manifold $\mathbb{F}_{n, k}$. We henceforth denote by $\pi$ the blow down map.
\end{proof}


Now we are ready to define $\omega_{\mathrm{orb},n,k}$. To do so, we first define another family of model metrics $\tilde{\omega}_{\mathrm{orb}, n, k}$ on $\mathbb{F}_{n, k}$. The metric $\tilde{\omega}_{\mathrm{orb}, n, k}$ is a \KE edge metric with Ricci curvature $(n+1)/k$ and an edge singularity of angle $2\pi/k$ along $Z_{n, -k}$ that degenerates on $Z_{n, k}$. In particular, this metric is smooth along $Z_{n, -k}$ when $k=1$ and collapses along $Z_{n, k}$. Indeed, when $k=1$ this metric coincides with the Fubini--Study metric on $\mathbb{P}^n$. We will then define $\omega_{\mathrm{orb},n,k}$ as the pull-back of the metric $\tilde{\omega}_{\mathrm{orb},n,k}$ under the blow up map.

\begin{defi}\label{def_ModOnFnk}
Let $s$ be defined in \eqref{eq: def s}. To define $\tilde{\omega}_{\mathrm{orb}, n, k}$, introduce another coordinate $\nu\in(0, 1)$ such that 
\begin{equation*}
    \nu=1-(e^{\frac{s}{k}}+1)^{-1}.
\end{equation*}
Note that $\{\nu=0\}$ and $\{\nu=1\}$ correspond to the zero section $Z_{n, k}$ and the infinity section $Z_{n, -k}$ respectively. We define $\tilde{\omega}_{\mathrm{orb}, n, k}$ by giving its potential function as follows:
\begin{equation*}
    g(\nu):=-k\log(1-\nu), \quad \nu\in(0, 1),
    \end{equation*}
and
\begin{equation*}
\tilde{\omega}_{\mathrm{orb}, n, k}:=\sqrt{-1}\partial\bar{\partial}g(s).
\end{equation*}
Note that $\tilde{\omega}_{\mathrm{orb}, n, k}$ is a degenerate \KE edge metric on $\mathbb{F}_{n, k}$ with Ricci curvature $(n+1)/k$ and an edge singularity of the angle $2\pi/k$ along $Z_{n, -k}$, while $\tilde{\omega}_{\mathrm{orb}, n, k}$ collapses along $Z_{n, k}$ (See Theorem \ref{thm: conv mod2} for details). More precisely, in local coordinates, 
\begin{align*}
    \tilde{\omega}_{\mathrm{orb}, n, k}=\frac{ke^{\frac{s}{k}}}{e^{\frac{s}{k}}+1}\pi_1^*\omega_{\operatorname{FS}}+\frac{e^{\frac{s}{k}}}{k(e^{\frac{s}{k}}+1)}&\left(\frac{\sqrt{-1}\textrm{d}w\wedge\textrm{d}\bar{w}}{|w|^2}+\sqrt{-1}\alpha\wedge\bar{\alpha}+\right.\\
    &\left.\sqrt{-1}\alpha\wedge\frac{\textrm{d}\bar{w}}{\bar{w}}+\sqrt{-1}\frac{\textrm{d}w}{w}\wedge\bar{\alpha}\right),
\end{align*}
where 
\begin{equation*}
    \alpha=k\frac{\sum_{i\neq j}\bar{z}_i\textrm{d}z_i}{1+\sum_{i\neq j}|z_i|^2},\quad \textup{on the chart}\;\{z_j\neq 0\},
\end{equation*}
and
\begin{equation*}
    \pi_1([Z_1:\cdots:Z_n], w)=[Z_1:\cdots:Z_n]
\end{equation*}
is the projection map from $\mathbb{F}_{n, k}$ to the zero section $Z_{n, k}$ identified as $\mathbb{P}^{n-1}$.
\end{defi}

\begin{defi}\label{def_ModOnWps}
For $k>1$, the model metric $\omega_{\mathrm{orb},n,k}$ on $\mathbb{P}^n(1,\dots,1,k)$ is defined away from $[0:\cdots:0:1]$ as the pull-back of the metric $\tilde{\omega}_{\mathrm{orb},n,k}$ on $\mathbb{F}_{n, k}\setminus\{Z_{n, k}\}$ under the inverse of the blow up map. For $k=1$, we define $\omega_{\mathrm{orb},n,1}$ as the Fubini--Study metric on $\mathbb{P}^n(1,\dots,1,1)=\mathbb{P}^n$.
\end{defi}



\section{Constructions of K\"{a}hler--Einstein edge metrics }\label{sec: general}

In this section, we aim to construct \KE edge metrics defined on the Calabi--Hirzebruch manifold $\mathbb{F}_{n, k}$ with edge singularities along the zero section $Z_{n, k}$ and the infinity section $Z_{n, -k}$ using Calabi ansatz. By \KE edge metrics we mean K\"{a}hler edge currents that satisfy the \KE equation on smooth locus. The reader may refer to \cite{Y14} for detailed exposition of K\"{a}hler edge metrics.

\subsection{Constructions of $\eta_{\beta_1}$ on $\mathbb{F}_{n, k}$}

Recall, the coordinate $s$ is defined in \eqref{eq: def s}, where $w$ and $z_j$, $j\neq i$ are coordinates we use when working on the chart $\{Z_i\neq 0\}\subset \mathbb{F}_{n, k}$. We seek a K\"{a}hler--Einstein edge metric 
\begin{equation*}
  \eta:=\sqrt{-1}\partial\bar{\partial}f(s)
\end{equation*}
for some smooth function $f$ on $\mathbb{F}_{n, k}$. Define
\begin{align}
\begin{aligned}\label{eq: defi of tau and varphi nk case}
    \tau(s):&=f'(s),\quad s\in(-\infty, +\infty)\\
    \varphi(s):&=f''(s)=\tau'(s),\quad s\in(-\infty, +\infty).
    \end{aligned}
\end{align}

Let $\pi_1$ and $\pi_2$ be projections from $\mathbb{F}_{n, k}$ to the zero section $Z_{n, k}$ and each fiber respectively, i.e., 
\begin{align*}
    \pi_1([Z_1:\cdots:Z_n], w)&=[Z_1:\cdots:Z_n],\\ \pi_2([Z_1:\cdots:Z_n], w)&=w.
\end{align*}
Denote by $\omega_{\operatorname{FS}}$ the Fubini--Study metric on $\mathbb{P}^{n-1}$ and define
\begin{align*}
    \omega_{\operatorname{Cyl}}&:=\frac{\sqrt{-1}\textrm{d}w\wedge\textrm{d}\bar{w}}{|w|^2},\\
    \alpha&:=\frac{k\sum_{j\neq i}\bar{z}_j\textrm{d}z_j}{1+\sum_{j\neq i}|z_j|^2},\quad \text{on the chart}\; \{Z_i\neq 0\}.
\end{align*}

Then direct calculation (cf. \cite[Section 3.3]{RZ21} for detailed computations in dimension $n$=2) yields
\begin{equation}\label{eq: eta in nk case}
    \begin{aligned}
    \eta=k\tau\pi_1^*\omega_{\operatorname{FS}}+\varphi&\left(\pi_2^*\omega_{\operatorname{Cyl}}+\sqrt{-1}\alpha\wedge\bar{\alpha}+\right.\\
    &\left.\sqrt{-1}\alpha\wedge\frac{\textrm{d}\bar{w}}{\bar{w}}+\sqrt{-1}\frac{\textrm{d}w}{w}\wedge\bar{\alpha}\right).
    \end{aligned}
\end{equation}

Positive definiteness of $\eta$ then implies $f'\geq 0$ and $f''\geq 0$ for $s\in(-\infty, +\infty)$.

\begin{prop}\label{prop: inf sup cond}
Assume $Z_{n, k}$ is non-collapsed, then $\inf_{s\in\mathbb{R}} \tau(s)>0$. Moreover, $\sup_{s\in\mathbb{R}} \tau(s)<+\infty$.
\end{prop}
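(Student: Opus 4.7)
The plan is to combine two ingredients: the monotonicity of $\tau$ coming from positive definiteness of $\eta$, and the compactness of $\mathbb{F}_{n,k}$, which bounds the total $\eta$-volume. The former lets me identify $\inf_\mathbb{R}\tau$ and $\sup_\mathbb{R}\tau$ with the limiting values $\tau(\pm\infty)$ corresponding to the two divisors $Z_{n,\pm k}$, and the task reduces to showing these limits are respectively strictly positive and finite.

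First I observe that, as already noted just before the proposition, positive definiteness of $\eta$ forces $\varphi = \tau' = f'' \geq 0$, so $\tau$ is non-decreasing on $\mathbb{R}$. Consequently the one-sided limits $\tau_{\pm} := \lim_{s\to\pm\infty}\tau(s)$ exist in $[0,+\infty]$, with $\inf_{\mathbb{R}}\tau = \tau_-$ and $\sup_{\mathbb{R}}\tau = \tau_+$, so the proposition reduces to establishing $\tau_- > 0$ and $\tau_+ < +\infty$.

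For the lower bound I read off from \eqref{eq: eta in nk case} that the horizontal component of $\eta$ equals $k\tau(s)\,\pi_1^*\omega_{\operatorname{FS}}$. The zero section $Z_{n,k} = \{w=0\}$ corresponds to $s \to -\infty$, and the induced metric on $Z_{n,k} \cong \mathbb{P}^{n-1}$ is the limit $k\tau_-\,\omega_{\operatorname{FS}}$. Non-collapse of $Z_{n,k}$ is precisely the statement that this limiting metric is non-degenerate, i.e.\ $\tau_- > 0$. The main subtle point of the whole argument will be spelling out rigorously that "non-collapse" does indeed translate to this inequality; this is really definitional within the Calabi-ansatz framework and amounts to requiring $\eta$ to extend as a non-degenerate edge current across $Z_{n,k}$ rather than degenerating the base direction.

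For the upper bound I exploit that $\mathbb{F}_{n,k}$ is compact, so the total volume $\int_{\mathbb{F}_{n,k}} \eta^n = [\eta]^n$ is a finite topological intersection number. A direct Calabi-ansatz computation from \eqref{eq: eta in nk case} expresses this volume, up to a positive dimensional constant, as $\int_{-\infty}^{+\infty} (k\tau(s))^{n-1} \varphi(s)\,ds = \tfrac{k^{n-1}}{n}\bigl(\tau_+^n - \tau_-^n\bigr)$. Finiteness of this integral then forces $\tau_+ < +\infty$, completing the proof.
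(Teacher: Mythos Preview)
Your argument is correct. The lower-bound step coincides with the paper's: both observe that the restriction of $\eta$ to $Z_{n,k}=\{s=-\infty\}$ is $k\tau_-\,\omega_{\operatorname{FS}}$, so non-collapse forces $\tau_->0$; you are merely more explicit about why $\inf_{\mathbb R}\tau=\tau_-$ (monotonicity from $\varphi\ge0$), a point the paper uses implicitly.

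For the upper bound the two arguments diverge. The paper restricts $\eta$ to a single fiber $\{Z_j=0,\,j\neq i\}\cong\mathbb P^1$, where the induced metric is $\frac{1}{2\varphi}d\tau^2+2\varphi\,d\theta^2$ with area form $d\tau\wedge d\theta$; the fiber area is then $2\pi(\tau_+-\tau_-)$, and finiteness of the area of a complex curve in a K\"ahler edge manifold gives $\tau_+<\infty$. You instead compute the total volume $\int_{\mathbb F_{n,k}}\eta^n$, which via the Calabi ansatz reduces to $\int_{\mathbb R}(k\tau)^{n-1}\varphi\,ds=\frac{k^{n-1}}{n}(\tau_+^n-\tau_-^n)$ times a fixed positive constant, and invoke compactness. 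Both routes work; the paper's fiber argument is a touch lighter since it avoids expanding $\eta^n$ and only needs the $1$-dimensional restriction, while your total-volume argument has the minor advantage of appealing directly to compactness of $\mathbb F_{n,k}$ rather than to the general fact that complex submanifolds have finite volume in K\"ahler edge geometry.
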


\begin{proof}
Assume by contradiction that $\inf_{s\in\mathbb{R}} \tau(s)=0$. Then by \eqref{eq: eta in nk case}, $\eta$ is identically zero when restricted to $Z_{n, k}$. However, we assume $Z_{n, k}$ is non-collapsed. Thus we must have $\inf_{s\in\mathbb{R}} \tau(s)>0$. To see $\sup_{s\in\mathbb{R}}\tau(s)<+\infty$, we follow the arguments in \cite[Lemma 3.2]{RZ21}. Indeed, restricting $\eta$ to the fiber $\{Z_j = 0, \forall j\neq i\}$, in \eqref{eq: eta in nk case} we get 
\begin{equation}
\begin{aligned}\label{eq: restriction}
    \eta &= \varphi \cdot \pi_2^* \omega_{\operatorname{Cyl}}\\
    &= \varphi \frac{\sqrt{-1}\textrm{d}w\wedge\textrm{d}\bar{w}}{|w|^2},
\end{aligned}
\end{equation}
and also in this case $s = \log|w|^2$. Then we use the coordinate $w=e^{s/2+\sqrt{-1}\theta}$ on this fiber, and by \eqref{eq: restriction} $\eta$ restricted on the fiber gives a metric
\begin{equation}\label{eq: metric res}
    g_\eta = \frac{1}{2\varphi(\tau)} \textrm{d}\tau^2+2\varphi(\tau)\textrm{d}\theta^2.
\end{equation}
Thus, the volume form on the fiber is $\textrm{d}\tau\wedge \textrm{d}\theta$, and the volume of the fiber is $2\pi (\sup_{s\in\mathbb{R}}\tau(s)-\inf_{s\in\mathbb{R}}\tau(s))$. Since for any K\"{a}hler edge metric the volume of a complex submanifold is finite, we conclude $\sup_{s\in\mathbb{R}}\tau(s)<\infty$. 
\end{proof}
By Proposition \ref{prop: inf sup cond}, we may rescale $f$ by a positive constant such that $\inf \tau=1$ and $\sup \tau=T<\infty$. Thus, we henceforth assume that $\tau$ ranges from $[1, T]$.

\begin{prop}\label{prop: bd cond b1 case}
If $\eta$ is a K\"{a}hler edge metric with conic singularities along $Z_{n, k}$ and $Z_{n, -k}$, then
\begin{align}
\begin{aligned}\label{eq: bd condition nk case}
    \varphi(1)&=0,\quad \frac{\textrm{d}\varphi}{\textrm{d}\tau}(1)=\beta_1,\\
    \varphi(T)&=0,\quad \frac{\textrm{d}\varphi}{\textrm{d}\tau}(T)=-\beta_2,
    \end{aligned}
\end{align}
if we denote by $2\pi\beta_1$ and $2\pi\beta_2$ respectively the angles of $\eta$ along $Z_{n, k}$ and $Z_{n, -k}$.
\end{prop}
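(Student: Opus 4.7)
The plan is to read off the boundary behavior of $\varphi$ at $\tau=1$ and $\tau=T$ by restricting $\eta$ to a generic fiber of the Hirzebruch fibration and comparing the resulting $1$-dimensional metric to the standard conic model $d\rho^2+\beta^2\rho^2\,d\theta^2$ of angle $2\pi\beta$. This is essentially the same strategy as in \cite[Lemma 3.2]{RZ21}, and the fiber reduction has already been carried out in the proof of Proposition \ref{prop: inf sup cond}.

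First, I would fix the fiber $\{Z_j=0,\ \forall j\neq i\}$ and recall from \eqref{eq: restriction}--\eqref{eq: metric res} that on this fiber
\[
g_\eta=\tfrac{1}{2\varphi(\tau)}\,d\tau^2+2\varphi(\tau)\,d\theta^2,
\]
where $w=e^{s/2+\sqrt{-1}\theta}$ and $\tau=\tau(s)$. The zero section $Z_{n,k}=\{w=0\}$ corresponds to $\tau\to 1$ and the infinity section $Z_{n,-k}=\{w=\infty\}$ to $\tau\to T$. For $g_\eta$ to extend as an edge (hence finite-length) metric at these ends, the angular factor $2\varphi(\tau)$ must degenerate there, giving $\varphi(1)=\varphi(T)=0$.

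Next I would match the cone angle at $\tau=1$. Assuming $\varphi$ is smooth up to $\tau=1$ with $\varphi(1)=0$, write $\varphi(\tau)=a(\tau-1)+O((\tau-1)^2)$ with $a=\varphi'(1)>0$ (positivity follows from $\varphi\ge 0$ and $\varphi\not\equiv 0$). Introduce the geodesic radius $\rho$ by $\tau-1=\tfrac{a}{2}\rho^2$; a direct computation gives
\[
\tfrac{1}{2\varphi(\tau)}\,d\tau^2=d\rho^2+O(\rho^2)\,d\rho^2,\qquad 2\varphi(\tau)\,d\theta^2=a^2\rho^2\,d\theta^2+O(\rho^4)\,d\theta^2,
\]
so $g_\eta$ is asymptotic to $d\rho^2+a^2\rho^2\,d\theta^2$ near $\tau=1$. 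By definition of the cone angle $2\pi\beta_1$ along $Z_{n,k}$ this forces $a=\beta_1$, i.e.\ $\varphi'(1)=\beta_1$. The argument at $\tau=T$ is identical, using the substitution $T-\tau=\tfrac{b}{2}\rho^2$ with $b=-\varphi'(T)>0$; comparing with the cone model of angle $2\pi\beta_2$ yields $\varphi'(T)=-\beta_2$.

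The only delicate point is justifying that the full metric $\eta$ on $\mathbb{F}_{n,k}$ has conic angle $2\pi\beta_1$ (resp.\ $2\pi\beta_2$) along $Z_{n,k}$ (resp.\ $Z_{n,-k}$) if and only if its restriction to a transversal fiber has the corresponding $1$-dimensional conic behavior. This follows from the explicit expression \eqref{eq: eta in nk case}: the horizontal part $k\tau\,\pi_1^*\omega_{\operatorname{FS}}$ extends smoothly and non-degenerately across both divisors (since $\tau$ stays in $[1,T]$), while the remaining block is exactly the pullback of the fiber metric, so the edge structure of $\eta$ is entirely governed by $\varphi$ through the fiber model just analyzed. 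This gives \eqref{eq: bd condition nk case}.
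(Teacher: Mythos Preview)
Your argument is correct and follows a route different from the paper's. You read off the boundary conditions by comparing the restricted fiber metric $g_\eta=\tfrac{1}{2\varphi}\,d\tau^2+2\varphi\,d\theta^2$ directly with the model cone $d\rho^2+\beta^2\rho^2\,d\theta^2$ via the substitution $\tau-1=\tfrac{a}{2}\rho^2$; this is the classical Calabi-ansatz computation and is completely self-contained. The paper instead invokes the polyhomogeneous expansion of \cite[Theorem~1, Proposition~4.4]{JMR16} for the potential $f$ near $w=0$ and $w=\infty$, writes $\varphi\sim C_2e^{\beta_1 s}+\cdots$ in the $s$-variable, and then computes $\varphi'(\tau)=(\partial\varphi/\partial s)/\varphi$ term by term. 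The paper's approach has the advantage that it does not presuppose smoothness of $\varphi$ as a function of $\tau$ at the endpoints---that regularity is supplied by the deep expansion result---whereas your line ``Assuming $\varphi$ is smooth up to $\tau=1$'' is exactly the step where you use, rather than prove, this regularity. In the Calabi-ansatz setting this assumption is standard and harmless, so your proof is fine; but if you want to match the paper's level of rigor you should either cite a regularity result such as \cite{JMR16} or observe that the edge condition on $\eta$ forces $\varphi$ to vanish to first order in $\tau$ at the endpoints.
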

\begin{proof}
Recall we assume $\tau$ ranges from $[1, T]$. In particular, this implies that
\begin{equation*}
    \lim_{s\to \pm \infty}\frac{\textrm{d}\tau}{\textrm{d}s}=0,
\end{equation*}
which combines with \eqref{eq: defi of tau and varphi nk case} show that
\begin{equation*}
    \varphi(1)=\varphi(T)=0.
\end{equation*}
Next, we follow the arguments in the proof of \cite[Proposition 3.3]{RZ21}. Indeed, it follows from \cite[Theorem 1, proposition 4.4]{JMR16} that the potential function $f$ of $\eta$ has complete asymptotic expansions near both $w=0$ and $w=\infty$. Near $w=0$, the leading term in the expansion is $|w|^{2\beta_1}$. More precisely, by \eqref{eq: def s},
\begin{equation*}
    \begin{aligned}
        \varphi &\sim C_1 + C_2|w|^{2\beta_1}+(C_3\sin\theta+C_4\cos\theta)|w|^2+O(|w|^{2+\epsilon})\\
        &= C_1 + C_2 e^{\beta_1 s}+(C_3\sin\theta+C_4\cos\theta)e^s+O(e^{(1+\epsilon)s}).
    \end{aligned}
\end{equation*}
We first find $C_1=0$ by the fact that $\varphi(1)=\varphi(T)=0$ in \eqref{eq: bd condition nk case}. Moreover, the expansion can be differentiated term-by-term as $|w|\to 0$ or $s\to -\infty$. As $\varphi'(\tau)=\frac{\partial \varphi}{\partial s}\frac{\textrm{d}s}{\textrm{d}\tau}=\frac{\partial \varphi}{\partial s}/\varphi$, we obtain
\begin{equation*}
    \varphi'(1) = \beta_1.
\end{equation*}
The same arguments imply that
\begin{equation*}
    \varphi'(T) = -\beta_2,
\end{equation*}
where the minus sign comes from the fact that the leading term in this expansion is $|w|^{-2\beta_2}=e^{-\beta_2 s}$.
\end{proof}

\begin{defi}\label{defi: KEE}
A K\"{a}hler--Einstein edge metric $\omega$ is a K\"{a}hler edge metric that has constant Ricci curvature away from the singular locus.
\end{defi}

By Definition \ref{defi: KEE}, $\eta$ is a \KE edge metric if and only if it satisfies the following \KE edge equation:
\begin{equation}\label{eq: eta kee}
    \operatorname{Ric}\eta=\lambda\eta+(1-\beta_1)[Z_{n, k}]+(1-\beta_2)[Z_{n, -k}],
\end{equation}
where by $\lambda$ we denote the Ricci curvature.

The next proposition shows that \eqref{eq: eta kee} is equivalent to an ODE satisfied by $\tau$ and $\varphi$ with boundary conditions given in \eqref{eq: bd condition nk case}.

\begin{prop}\label{prop: red to ode}
The \K\; edge metric $\eta$ given in \eqref{eq: eta in nk case} satisfies the \KE edge equation \eqref{eq: eta kee} if and only if
\begin{align}
    &\lambda=\frac{n}{k}-\beta_1,\quad \beta_1\in\left(0, \frac{n}{k}\right), \label{eq: ric curv gene}\\
    &\varphi(\tau)=\frac{1}{k}\frac{\tau^n-1}{\tau^{n-1}}+\frac{1}{n+1}(\beta_1-\frac{n}{k})\frac{\tau^{n+1}-1}{\tau^{n-1}}, \quad \tau\geq 1.\label{eq: varphi general expression nk case}
\end{align}
Moreover, $\beta_2$ and $T$ are determined by $\beta_1$ such that $T>1$ and $\beta_2 > 0$.
\end{prop}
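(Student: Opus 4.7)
The plan is to exploit the Calabi-ansatz structure of \eqref{eq: eta in nk case} to compute $\Ric\,\eta$ explicitly, reduce the K\"ahler--Einstein edge equation \eqref{eq: eta kee} on the smooth locus to a first-order linear ODE for $\varphi$ viewed as a function of $\tau$, and then apply the boundary conditions from Proposition \ref{prop: bd cond b1 case} to pin down both $\lambda$ and the explicit form of $\varphi$.

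\textbf{Determinant and Ricci form.} Write $\eta = A + B$ with $A := k\tau\,\pi_1^*\omega_{\operatorname{FS}}$ and $B := \varphi \sqrt{-1}\partial s \wedge \bar\partial s$. The identities $(\pi_1^*\omega_{\operatorname{FS}})^n = 0$ and $B\wedge B = 0$ give $\eta^n = n\,A^{n-1}\wedge B$; moreover, every cross-term in $B$ containing $\alpha$ or $\bar\alpha$ annihilates against $\pi_1^*\omega_{\operatorname{FS}}^{n-1}$ for dimensional reasons, so only the $\sqrt{-1}dw\wedge d\bar w/|w|^2$ piece survives. Hence $\det\eta = C\,\tau^{n-1}\varphi\cdot |w|^{-2}(1+\sum_{j\neq i}|z_j|^2)^{-n}$ for a positive constant $C = C(n,k)$. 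Setting $h(s) := (n-1)\log\tau(s) + \log\varphi(s)$ and applying $-\ddbar$ to $\log\det\eta$ on the smooth locus---using $\ddbar s = k\,\pi_1^*\omega_{\operatorname{FS}}$, $\ddbar\log|w|^2 = 0$, and $\ddbar\log(1+\textstyle\sum|z_j|^2) = \omega_{\operatorname{FS}}$---we obtain
\[
\Ric\,\eta \;=\; \bigl(n - k h'(s)\bigr)\pi_1^*\omega_{\operatorname{FS}} \;-\; h''(s)\sqrt{-1}\partial s\wedge \bar\partial s.
\]

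\textbf{The ODE and its solution.} Because $\tau'(s) = \varphi$, the chain rule yields $h'(s) = (n-1)\varphi/\tau + d\varphi/d\tau$. The $(1,1)$-forms $\pi_1^*\omega_{\operatorname{FS}}$ and $\sqrt{-1}\partial s\wedge \bar\partial s$ are linearly independent (the latter carries the fiber term $\sqrt{-1}dw\wedge d\bar w/|w|^2$), so matching coefficients in $\Ric\,\eta = \lambda\eta$ produces
\[
\frac{d\varphi}{d\tau} + \frac{n-1}{\tau}\varphi \;=\; \frac{n}{k} - \lambda\tau,
\]
while the complementary equation $-h''(s) = \lambda\varphi$ is automatic upon $s$-differentiation. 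Multiplying by the integrating factor $\tau^{n-1}$ and integrating using $\varphi(1) = 0$ gives
\[
\tau^{n-1}\varphi(\tau) \;=\; \frac{\tau^n-1}{k} \;-\; \frac{\lambda}{n+1}(\tau^{n+1}-1).
\]
The remaining boundary condition $\varphi'(1) = \beta_1$ then forces $\lambda = n/k - \beta_1$, yielding \eqref{eq: ric curv gene} and, after substitution, \eqref{eq: varphi general expression nk case}. The restriction $\beta_1\in(0,n/k)$ is necessary: $\beta_1 \geq n/k$ would give $\lambda \leq 0$, making $\tau^{n-1}\varphi$ strictly positive for all $\tau > 1$ and precluding the second zero required at the infinity section.

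\textbf{Determining $T$ and positivity of $\beta_2$.} Let $\psi(\tau) := \tau^{n-1}\varphi(\tau)$. Then $\psi(1) = 0$ and $\psi'(\tau) = \tau^{n-1}\bigl[n/k - (n/k - \beta_1)\tau\bigr]$, so for $\beta_1 \in (0, n/k)$ the unique critical point is $\tau_0 := n/(n - k\beta_1) > 1$. Thus $\psi$ strictly increases on $(1,\tau_0)$ from $0$ to a positive maximum and strictly decreases on $(\tau_0,\infty)$, tending to $-\infty$ as $\tau\to\infty$ (the leading term of $\psi$ is $-\lambda\tau^{n+1}/(n+1)$ with $\lambda > 0$). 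By the intermediate value theorem there is a unique $T > \tau_0 > 1$ with $\psi(T) = 0$, hence $\varphi(T) = 0$ and $\varphi > 0$ on $(1,T)$. Since $\varphi(T) = 0$, one has $\psi'(T) = T^{n-1}\varphi'(T)$, and $\psi'(T) < 0$ forces $\beta_2 := -\varphi'(T) > 0$. The only delicate point is the Calabi-ansatz determinant computation in Step 1 (ensuring the cross-term vanishings and the precise factor $\tau^{n-1}\varphi$); once that is in place, the rest is routine ODE analysis.
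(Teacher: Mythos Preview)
Your proof is correct and follows essentially the same route as the paper: compute $\Ric\eta$ via the Calabi ansatz, match coefficients against $\eta$ to obtain the first-order linear ODE for $\varphi(\tau)$, and use the boundary data at $\tau=1$ to fix $\lambda$ and the integration constant. Your packaging via $B=\varphi\sqrt{-1}\partial s\wedge\bar\partial s$ and $\ddbar s = k\pi_1^*\omega_{\operatorname{FS}}$ is a clean way to reach the same expression the paper writes out by hand in \eqref{eq: ric eta nk case}--\eqref{eq: l2 ode}.

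The one place your argument genuinely differs is in locating $T$ and checking $\beta_2>0$. The paper factors the numerator $P(\tau)$ of $\varphi$ as in \eqref{eq: polyforT} and applies the intermediate value theorem to the second factor, taking $T$ to be the \emph{first} root after $1$ and reading off $\varphi'(T)<0$ from that. You instead analyze $\psi(\tau)=\tau^{n-1}\varphi(\tau)$ directly: since $\psi'(\tau)=\tau^{n-1}\bigl(n/k-\lambda\tau\bigr)$ has a single sign change at $\tau_0=n/(n-k\beta_1)>1$, the function $\psi$ is unimodal on $(1,\infty)$, which gives not only existence but \emph{uniqueness} of $T>1$ and immediately yields $\psi'(T)<0$, hence $\beta_2>0$. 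This is slightly sharper than the paper's argument and avoids the polynomial factorization entirely.
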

\begin{proof}
By \eqref{eq: eta in nk case}, we calculate $\operatorname{Ric}\eta$:
\begin{align}
\begin{aligned}\label{eq: ric eta nk case}
    \operatorname{Ric}\eta&=-\sqrt{-1}\partial\bar{\partial}\log \eta^n\\
    &=-\sqrt{-1}\partial\bar{\partial} \log \left(k^{n-1}\tau^{n-1}\varphi(\pi_1^* \omega_{\operatorname{FS}})^{n-1}\wedge\pi_2^*\omega_{\operatorname{Cyl}} \right)\\
    &=(1-\beta_1)[Z_{n, k}]+(1-\beta_2)[Z_{n, -k}]+\left(n-k(n-1)\frac{\varphi}{\tau}-k\frac{\textrm{d}\varphi}{\textrm{d}\tau}\right)\pi_1^*\omega_{\operatorname{FS}}\\
    &-\varphi \frac{\textrm{d}}{\textrm{d}\tau}\left((n-1)\frac{\varphi}{\tau}+\frac{\textrm{d}\varphi}{\textrm{d}\tau} \right)\left(\pi_2^*\omega_{\operatorname{Cyl}}+\sqrt{-1}\alpha\wedge\bar{\alpha}+\right.\\
    &\left.\sqrt{-1}\alpha\wedge\frac{\textrm{d}\bar{w}}{\bar{w}}+\sqrt{-1}\frac{\textrm{d}w}{w}\wedge\bar{\alpha} \right).
    \end{aligned}
\end{align}
Plugging \eqref{eq: eta in nk case} and \eqref{eq: ric eta nk case} in \eqref{eq: eta kee}, \eqref{eq: eta kee} is equivalent to 
\begin{align}
    n-k(n-1)\frac{\varphi}{\tau}-k\frac{\textrm{d}\varphi}{\textrm{d}\tau}&=\lambda k \tau,\label{eq: l1 ode}\\
    -\varphi\frac{\textrm{d}}{\textrm{d}\tau}((n-1)\frac{\varphi}{\tau}+\frac{\textrm{d}\varphi}{\textrm{d}\tau})&=\lambda \varphi.\label{eq: l2 ode}
\end{align}
Now we first observe that \eqref{eq: l1 ode} and \eqref{eq: l2 ode} are equivalent since taking derivative of \eqref{eq: l1 ode} with respect to $\tau$ gives \eqref{eq: l2 ode}. Thus we conclude \eqref{eq: eta kee} is equivalent to the ODE \eqref{eq: l1 ode} together with boundary conditions \eqref{eq: bd condition nk case}. Solving this ODE with boundary conditions gives \eqref{eq: ric curv gene} and \eqref{eq: varphi general expression nk case}. We need the assumption $\beta_1\in(0, n/k)$ to ensure the existence of $T$ and $\beta_2$ that satisfy \eqref{eq: bd condition nk case}. Indeed, $\beta_2$ and $T$ are determined by $\beta_1$ due to \eqref{eq: bd condition nk case} and \eqref{eq: varphi general expression nk case}. More precisely, $T$ should be a root of the polynomial that appears in the right hand side of $\eqref{eq: varphi general expression nk case}$ and $-\beta_2$ should be determined by the derivative $\textrm{d}\varphi/\textrm{d}\tau$ at $T$. Writing \eqref{eq: varphi general expression nk case} as $\varphi(\tau)=P(\tau)/\tau^{n-1}$, we factor $P(\tau)$ as
\begin{equation}\label{eq: polyforT}
\begin{aligned}
    P(\tau) &= (\tau-1)\left[\frac{1}{k}(\tau^{n-1}+\cdots+1)+\frac{1}{n+1}\left(\beta_1-\frac{n}{k}\right)(\tau^n+\cdots+1)
    \right]\\
    &= \frac{(\tau-1)}{n+1}\left[
    \left(\beta_1-\frac{n}{k}\right)\tau^n+\left(\frac{1}{k}+\beta_1\right)(\tau^{n-1}+\cdots +1)
    \right].
\end{aligned}
\end{equation}
Under the assumption $\beta_1\in(0, n/k)$, one finds $P(\tau)>0$ for $1<\tau\ll 2$ and $P(\tau)<0$ for $\tau\to \infty$. Thus, $P(\tau)$ has at least one real root that is greater than $1$. Denote by $T$ 
the first root of $P$ after $1$. In particular, $\textrm{d}\varphi/\textrm{d}\tau(T)<0$. By \eqref{eq: bd condition nk case}, this implies that $\beta_2>0$ as claimed.
\end{proof}

By \eqref{eq: defi of tau and varphi nk case}, there holds
\begin{equation}\label{eq: der sty}
    \frac{\textrm{d}s}{\textrm{d}\tau}=\frac{1}{\varphi(\tau)}.
\end{equation}
Combining \eqref{eq: eta in nk case}, \eqref{eq: der sty} and Proposition \ref{prop: red to ode} altogether, we realize that given $\beta_1\in(0, n/k)$ we can construct a \KE edge metric $\eta$ on $\mathbb{F}_{n, k}$ using coordinates $\tau$ and $\varphi$. This \KE edge metric has Ricci curvature $\lambda=n/k-\beta_1$, and has edge singularities of angle $2\pi\beta_1$ along $Z_{n, k}$ and angle $2\pi\beta_2$ along $Z_{n, -k}$. Note that $\beta_2$ is determined by $\beta_1$. In other words, we can construct a family of \KE edge metrics on $\mathbb{F}_{n, k}$ parametrized by $\beta_1\in (0, n/k)$. In Section \ref{sec: asym of kee}, we study the asymptotic behavior of this family of metrics when $\beta_1$ approaches the two extremes: $n/k$ or $0$.

Recall, in the construction of $\eta$ we rescale the metric such that $\tau$ ranges from $[1, T]$. Note that $T$ is determined by $\beta_1$. By Proposition \ref{prop: inf sup cond}, we may also rescale the metric such that $\tau$ ranges from $[t, 1]$ for some $0<t<1$. In such a way, we construct another family of \KE edge metrics on $\mathbb{F}_{n, k}$ parametrized by $\beta_2$ in the remainder of this section. Such metrics can be seen as obtained after renormalizing metrics in the family that is parametrized by $\beta_1$. However, when studying their asymptotic behaviors they give rise to different limit metric.

\subsection{Constructions of $\xi_{\beta_2}$ on $\mathbb{F}_{n, k}$}

Now consider a change of coordinate $u:=1/w$. Then \eqref{eq: def s} can be written as 
\begin{equation*}
    s=-\log|u|^2+k\log\left(1+\sum_{j\neq i}^n |z_j|^2 \right).
\end{equation*}
We still denote by $f(s)$ a smooth function on $\mathbb{F}_{n, k}$ and seek \KE edge metrics that have the form $\sqrt{-1}\partial\bar{\partial} f(s)$. Recall $\tau(s)$ and $\varphi(s)$ are defined in \eqref{eq: defi of tau and varphi nk case}. Let
\begin{align}
    \xi:&=\sqrt{-1}\partial\bar{\partial}f(s)\notag\\
    &\begin{aligned}
    &=k\tau\pi_1^*\omega_{\operatorname{FS}}+\varphi\left(\pi_2^*\omega_{\operatorname{Cyl}}+\sqrt{-1}\alpha\wedge\bar{\alpha}-\right.\\
    &\left.\sqrt{-1}\alpha\wedge\frac{\textrm{d}\bar{u}}{\bar{u}}-\sqrt{-1}\frac{\textrm{d}u}{u}\wedge\bar{\alpha} \right),\label{eq: xi metric}
    \end{aligned}
\end{align}
where $\pi_1$, $\pi_2$ and $\alpha$ are the same as those in \eqref{eq: eta in nk case}. Recall by Proposition \ref{prop: inf sup cond},
\begin{equation*}
0<\inf_{s\in\mathbb{R}}\tau(s)<\sup_{s\in\mathbb{R}}\tau(s)<+\infty.
\end{equation*}
We rescale the potential function $f$ such that for some $t>0$,
\begin{equation}\label{eq: new iod tau}
    \sup_{s\in\mathbb{R}} \tau(s)=1,\quad \inf_{s\in\mathbb{R}} \tau(s)=t.
\end{equation}
Assume $\xi$ is a \K\;edge metric on $\mathbb{F}_{n, k}$. Then, under the renormalization given by \eqref{eq: new iod tau}, Proposition \ref{prop: bd cond b1 case} and Proposition \ref{prop: red to ode}, which respectively describe the boundary conditions for $\varphi$ and the ODE satisfied by $\varphi$, translate to the following two Propositions.

\begin{prop}\label{prop: bd cond b2 case}
Assume $\xi$ has edge singularities of angle $2\pi\beta_1$ and $2\pi\beta_2$ respectively along $Z_{n, k}$ and $Z_{n, -k}$. Recall $\varphi$ in \eqref{eq: xi metric}. Then,
\begin{align}
\begin{aligned}\label{eq: bd cond nk b2 case}
    \varphi(t)&=0,\quad \frac{\textrm{d}\varphi}{\textrm{d}\tau}(t)=\beta_1,\\
    \varphi(1)&=0,\quad \frac{\textrm{d}\varphi}{\textrm{d}\tau}(1)=-\beta_2.
    \end{aligned}
\end{align}
\end{prop}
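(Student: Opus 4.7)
The plan is to mirror the proof of Proposition \ref{prop: bd cond b1 case} essentially verbatim, with the only substantive change being that the roles of the two divisors are swapped relative to the endpoints of the $\tau$-interval. Under the renormalization \eqref{eq: new iod tau}, the zero section $Z_{n,k}$ now corresponds to $u=\infty$ (equivalently $w=0$, so $s\to-\infty$), at which $\tau\to t$, while the infinity section $Z_{n,-k}$ corresponds to $u=0$, so $s\to+\infty$, at which $\tau\to 1$.

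First I would verify the endpoint vanishing $\varphi(t)=\varphi(1)=0$. Since $\tau$ is bounded and monotone in $s$ with extrema only at $s=\pm\infty$, we have $\mathrm{d}\tau/\mathrm{d}s\to 0$ as $s\to\pm\infty$, and by \eqref{eq: defi of tau and varphi nk case} this is exactly $\varphi$ evaluated at the endpoint values of $\tau$.

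Next I would establish the derivative values by invoking the complete asymptotic expansions of the potential $f$ near each divisor from \cite[Theorem 1, Proposition 4.4]{JMR16}, exactly as in the proof of Proposition \ref{prop: bd cond b1 case}. Near $Z_{n,-k}$ the leading edge term is $|u|^{2\beta_2}=e^{-\beta_2 s}$, so restricting to a fiber and using \eqref{eq: defi of tau and varphi nk case} one obtains an expansion
\begin{equation*}
\varphi \sim C_1 + C_2 e^{-\beta_2 s} + (\text{higher-order terms}) \quad \text{as } s\to+\infty.
\end{equation*}
The boundary condition $\varphi(1)=0$ forces $C_1=0$, and differentiating the expansion term-by-term (justified by the regularity in \cite{JMR16}) gives, via the chain rule $\varphi'(\tau)=(\partial_s\varphi)/\varphi$, the limit $\varphi'(1)=-\beta_2$. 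Near $Z_{n,k}$ one switches back to the local coordinate $w=1/u$, the leading edge term is $|w|^{2\beta_1}=e^{\beta_1 s}$, and the same argument with $s\to-\infty$, $\tau\to t$ yields $\varphi'(t)=\beta_1$; the opposite sign here versus the $\beta_2$-end reflects the opposite sign of the exponent in $s$.

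There is no real obstacle beyond bookkeeping, since the analytical input (the JMR expansion and its term-by-term differentiability) is identical to that used for Proposition \ref{prop: bd cond b1 case}; the only thing to watch is the sign accounting coming from the change of coordinate $u=1/w$ and the renormalization \eqref{eq: new iod tau}, which is precisely what swaps the positions of $\beta_1$ and $\beta_2$ relative to the endpoints of the $\tau$-interval.
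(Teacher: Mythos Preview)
Your proposal is correct and is exactly what the paper intends: the paper does not spell out a separate proof but simply states that Proposition~\ref{prop: bd cond b1 case} ``translates'' to Proposition~\ref{prop: bd cond b2 case} under the renormalization~\eqref{eq: new iod tau}, and your argument carries out precisely that translation with the correct bookkeeping of signs and endpoints.
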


\begin{prop}\label{prop: red to ode b2 case}
Under the same assumptions in Proposition \ref{prop: bd cond b2 case}, the \K\;edge metric $\xi$ satisfies the \KE edge equation if and only if 
\begin{align}
    &\mu=\frac{n}{k}+\beta_2,\quad \beta_2\in\left(0, \frac{1}{k}\right),\label{eq: beta2range}\\
    &\varphi(\tau)=\frac{1}{k}\frac{\tau^n -1}{\tau^{n-1}}-\frac{1}{n+1}\left(\frac{n}{k}+\beta_2\right)\frac{\tau^{n+1}-1}{\tau^{n-1}},\quad \tau\in(t, 1),\notag
\end{align}
where by $\mu$ we denote the Ricci curvature of $\xi$. Moreover, $\beta_1$ and $t$ are determined by $\beta_2$.
\end{prop}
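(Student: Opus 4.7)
The plan is to mirror the proof of Proposition \ref{prop: red to ode} with minor adjustments forced by the coordinate change $u=1/w$ and the renormalization \eqref{eq: new iod tau}. First I would compute $\operatorname{Ric}\xi$ starting from \eqref{eq: xi metric}. Although the cross terms in \eqref{eq: xi metric} carry opposite signs compared to \eqref{eq: eta in nk case} (due to $du/u = -dw/w$), the volume form $\xi^n$ still equals $k^{n-1}\tau^{n-1}\varphi\,(\pi_1^*\omega_{\operatorname{FS}})^{n-1}\wedge \pi_2^*\omega_{\operatorname{Cyl}}$ up to sign, since $\pi_2^*\omega_{\operatorname{Cyl}}$ is invariant under $w\leftrightarrow u$. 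Thus an identical calculation to \eqref{eq: ric eta nk case} produces a Ricci form with the same scalar coefficients of $\pi_1^*\omega_{\operatorname{FS}}$ and of the mixed $(1,1)$-form in the fiber direction, with currents $(1-\beta_1)[Z_{n,k}]+(1-\beta_2)[Z_{n,-k}]$. Equating $\operatorname{Ric}\xi$ with $\mu\xi+(1-\beta_1)[Z_{n,k}]+(1-\beta_2)[Z_{n,-k}]$ then yields, exactly as in \eqref{eq: l1 ode}--\eqref{eq: l2 ode}, the single first-order ODE
\[
n-k(n-1)\frac{\varphi}{\tau}-k\frac{d\varphi}{d\tau}=\mu k\tau,
\]
which is a linear ODE whose general solution takes the form $\varphi(\tau)=\tau/k-\mu\tau^2/(n+1)+C/\tau^{n-1}$.

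Next I would impose the right-boundary conditions from Proposition \ref{prop: bd cond b2 case}, namely $\varphi(1)=0$ and $\varphi'(1)=-\beta_2$. The first condition eliminates $C$ in terms of $\mu$, and the second forces $\mu=n/k+\beta_2$, giving the claimed Ricci curvature. Substituting back produces the stated closed form
\[
\varphi(\tau)=\frac{1}{k}\cdot\frac{\tau^n-1}{\tau^{n-1}}-\frac{1}{n+1}\Bigl(\frac{n}{k}+\beta_2\Bigr)\frac{\tau^{n+1}-1}{\tau^{n-1}}.
\]
The only reason to impose the conditions at the right endpoint rather than the left is that $\tau=1$ is now known a priori (by \eqref{eq: new iod tau}), whereas $t$ is an unknown determined by the formula for $\varphi$.

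To finish, I would verify that the remaining left-boundary conditions $\varphi(t)=0$, $\varphi'(t)=\beta_1>0$ can be achieved for some $t\in(0,1)$ precisely when $\beta_2\in(0,1/k)$. Factoring $(\tau-1)$ out of $\varphi(\tau)\tau^{n-1}$ as in \eqref{eq: polyforT} produces
\[
\varphi(\tau)\tau^{n-1}=\frac{\tau-1}{n+1}\Bigl[\frac{n+1}{k}(\tau^{n-1}+\cdots+1)-\Bigl(\frac{n}{k}+\beta_2\Bigr)(\tau^{n}+\cdots+1)\Bigr].
\]
The bracketed factor evaluates to $1/k-\beta_2$ at $\tau=0$ and to $-(n+1)\beta_2$ at $\tau=1$, so for $\beta_2\in(0,1/k)$ it has a first root $t\in(0,1)$ at which it is strictly decreasing. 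This decrease, combined with $t-1<0$, gives $\varphi'(t)>0$, and I would then set $\beta_1:=\varphi'(t)>0$ so that the left boundary conditions of Proposition \ref{prop: bd cond b2 case} are met. Conversely, if $\beta_2\geq 1/k$ the bracket stays negative on $[0,1)$, ruling out any admissible $t$; this explains the upper bound in \eqref{eq: beta2range}.

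The main obstacle is bookkeeping the sign conventions introduced by the coordinate change $u=1/w$ and the reversed orientation of the $\tau$-interval: one must confirm that the ODE and its general solution are literally the same as in Proposition \ref{prop: red to ode} (only $\lambda$ gets renamed $\mu$ with a different sign pattern emerging from the boundary conditions), and that the polynomial analysis for the existence of $t\in(0,1)$ with $\varphi'(t)>0$ really pins down the sharp interval $\beta_2\in(0,1/k)$; the rest of the argument is a direct transcription of the previous proof.
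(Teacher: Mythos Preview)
Your proposal is correct and follows exactly the route the paper intends: the paper does not write out a separate proof of this proposition, but simply states that Proposition~\ref{prop: bd cond b1 case} and Proposition~\ref{prop: red to ode} ``translate'' under the renormalization \eqref{eq: new iod tau}, and your write-up is precisely that translation carried out in detail. Your verification that the bracketed factor equals $1/k-\beta_2$ at $\tau=0$ and $-(n+1)\beta_2$ at $\tau=1$, and hence admits a unique root $t\in(0,1)$ with $\varphi'(t)>0$ exactly when $\beta_2\in(0,1/k)$, is a useful addition that the paper leaves implicit.
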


By Propositions \ref{prop: bd cond b2 case} and \ref{prop: red to ode b2 case}, we realize that given $\beta_2$ we can construct a family of \KE edge metrics on $\mathbb{F}_{n, k}$ parametrized by $\beta_2$ such that $\beta_1$ and $t$ are determined by $\beta_2$. This family of metrics can be obtained by renormalizing the family of metrics parametrized by $\beta_1$.

\section{Angle asymptotics 
}\label{sec: asym of kee}
In this section, we study the asymptotic behaviors of \KE edge metrics $\eta$ in \eqref{eq: eta in nk case} (respectively, $\xi$ in \eqref{eq: xi metric}) as $\beta_1$ (respectively, $\beta_2$) approaches either of its extremes. 

We first study the limit behavior of $\eta$ when $\beta_1\nearrow n/k$. It is enough to study the limiting behavior of $\varphi$ and $\tau$ thanks to Proposition \ref{prop: red to ode}.

\begin{prop}\label{prop: T infty nk case}
When $\beta_1$ is close to $n/k$, we have $T>1$ and $T\to \infty$ as $\beta_1\nearrow n/k$.
\end{prop}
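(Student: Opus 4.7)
The plan is to reduce the proposition to a one-line limit statement about the explicit polynomial from \eqref{eq: polyforT}. Since $T$ is by definition the first zero of $P(\tau)$ after $\tau=1$, and since $(\tau-1)$ has already been factored out in \eqref{eq: polyforT}, $T$ must be the smallest real root strictly larger than $1$ of
\begin{equation*}
\Big(\tfrac{n}{k}-\beta_1\Big)\tau^n=\Big(\tfrac{1}{k}+\beta_1\Big)\sum_{j=0}^{n-1}\tau^j .
\end{equation*}
For $\beta_1\in(0,n/k)$ both sides are positive at $\tau=1$ and we may divide, obtaining the scalar equation
\begin{equation*}
F(T):=\frac{T^n}{\,1+T+\cdots+T^{n-1}\,}=\frac{1/k+\beta_1}{n/k-\beta_1}=:R(\beta_1).
\end{equation*}

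Next, I would analyze $F$ and $R$ separately and match. Writing $F(T)=\bigl(\sum_{j=1}^{n}T^{-j}\bigr)^{-1}$ shows that $F$ is smooth and strictly increasing on $[1,\infty)$, with $F(1)=1/n$ and $F(T)\to\infty$ as $T\to\infty$ (indeed $F(T)\sim T$ for large $T$). In parallel, $R$ is a continuous strictly increasing function of $\beta_1\in(0,n/k)$, with $R(0)=1/n$ and $R(\beta_1)\to+\infty$ as $\beta_1\nearrow n/k$ because its denominator tends to $0$ while its numerator tends to $(n+1)/k>0$.

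Combining these two facts, for every $\beta_1\in(0,n/k)$ there is a unique $T=T(\beta_1)>1$ with $F(T)=R(\beta_1)$ (this re-derives the existence of $T$ inside Proposition \ref{prop: red to ode}), and by the monotonicity of $F$ together with the divergence $R(\beta_1)\to+\infty$ we immediately obtain $T(\beta_1)\to\infty$ as $\beta_1\nearrow n/k$. There is no substantive obstacle here; the only thing worth double-checking is the strict monotonicity of $F$, which is the elementary observation above, and the fact that the root we are tracking is indeed the \emph{first} root after $1$, which is automatic because $F$ is monotone and we read $T$ off a single level set of $F$.
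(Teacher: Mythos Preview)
Your proof is correct and takes a genuinely different, more direct route than the paper's. The paper factors $\varphi$ completely over $\mathbb{C}$ as in \eqref{eq: varphi roots expre}, applies Vieta's formulas to the product $\alpha_1\cdots\alpha_{n-1}T$, and then argues that as $\beta_1\nearrow n/k$ the ``extra'' roots $\alpha_1,\ldots,\alpha_{n-1}$ converge to the nontrivial $n$-th roots of unity (since $\varphi\to\frac{1}{k}\frac{\tau^n-1}{\tau^{n-1}}$), so their moduli tend to $1$ while Vieta forces the full product of moduli to blow up, whence $T\to\infty$. Your argument bypasses complex roots and Vieta entirely: rewriting the defining equation as $F(T)=R(\beta_1)$ with $F$ strictly increasing on $[1,\infty)$ gives uniqueness of the real root $T>1$ for free and yields $T\to\infty$ immediately from $R(\beta_1)\to\infty$. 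As a bonus, since $F(T)\sim T$ for large $T$, your equation also hands you the asymptotic $T\sim R(\beta_1)=\frac{1+k\beta_1}{n-k\beta_1}$ that the paper records separately in \eqref{eq: T asm behav}, so your approach is both shorter and slightly more informative.
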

\begin{proof}
Recall by \eqref{eq: bd condition nk case}, $1$ and $T$ are both roots of $\varphi(\tau)$. By Proposition \ref{prop: red to ode}, $T>1$. To prove $T\to\infty$ as $\beta_1\nearrow n/k$, we write \eqref{eq: varphi general expression nk case} as \begin{equation}\label{eq: varphi roots expre}
    \varphi(\tau)=\frac{1}{\tau^{n-1}}\cdot\frac{1}{n+1}(\beta_1-\frac{n}{k})(\tau-1)(\tau-\alpha_1)\cdots(\tau-\alpha_{n-1})(\tau-T),
\end{equation}
where $\alpha_1, \dots, \alpha_{n-1}\in\mathbb{C}$. Comparing \eqref{eq: varphi general expression nk case} to \eqref{eq: varphi roots expre}, we have
\begin{equation*}
    (\tau-\alpha_1)\cdots(\tau-\alpha_{n-1})(\tau-T)=\tau^n+\frac{1+k\beta_1}{k\beta_1-n}(\tau^{n-1}+\cdots+1).
\end{equation*}
By Vieta's formulas,
\begin{align}
    (-1)^n\alpha_1\cdots\alpha_{n-1}\cdot T&=\frac{1+k\beta_1}{k\beta_1-n},\label{eq: product}\\
    \alpha_1+\cdots+\alpha_{n-1}+T&=\frac{1+k\beta_1}{n-k\beta_1}.\label{eq: sum}
\end{align}


By \eqref{eq: varphi general expression nk case}, $\varphi(\tau)$ has $n+1$ (possibly complex) roots. Since
\begin{equation*}
    \varphi(\tau)\to \frac{1}{k}\frac{\tau^n-1}{\tau^{n-1}},\quad\;\textrm{as}\;\beta_1\nearrow n/k,
\end{equation*}
we conclude that the $n$ roots of $\varphi$ except $T$ converge to the $n$th root of unity as $\beta_1\nearrow n/k$. In particular, $|\alpha_1|,\dots, |\alpha_{n-1}|$ converge to $1$ as $\beta_1\nearrow n/k$. However, by \eqref{eq: product} we see
\begin{equation*}
    |\alpha_1|\cdots|\alpha_{n-1}||T|\to +\infty,\quad\textrm{as}\;\beta_1\nearrow n/k.
\end{equation*}
Thus we must have $T\to +\infty$ as $\beta_1\nearrow n/k$.
\end{proof}
\begin{prop}\label{prop: length to infty nk case}
As $\beta_1\nearrow n/k$, the length of the path on each fiber between the intersection point of the fiber with $Z_{n, k}$ and that of the fiber with $Z_{n, -k}$ tends to infinity. In other words, $Z_{n, -k}$ gets pushed--off to infinity as $\beta_1\nearrow n/k$ if we fix a base point on $Z_{n, k}$.
\end{prop}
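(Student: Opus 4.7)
The plan is to express the desired distance as an explicit integral in $\tau$ and show it diverges. From the proof of Proposition \ref{prop: inf sup cond}, the restriction of $\eta$ to any fiber has the form $g_\eta = \tfrac{1}{2\varphi(\tau)} d\tau^2 + 2\varphi(\tau)\, d\theta^2$ with $\tau \in [1, T]$, and the $\theta$-circles collapse at the two endpoints, which are precisely the intersections with $Z_{n, k}$ (at $\tau=1$) and with $Z_{n, -k}$ (at $\tau=T$). Because the metric depends only on $\tau$, any path on the fiber connecting these two points has length at least $\int_1^T d\tau/\sqrt{2\varphi(\tau)}$ (with equality for a radial path), so the claim reduces to
\begin{equation*}
L(\beta_1) := \int_1^T \frac{d\tau}{\sqrt{2\varphi(\tau)}} \longrightarrow \infty \qquad \text{as } \beta_1 \nearrow n/k.
\end{equation*}

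Next, I would isolate the limiting behavior of $\varphi$. From the explicit formula \eqref{eq: varphi general expression nk case}, as $\beta_1 \nearrow n/k$ the correction term $\tfrac{1}{n+1}(\beta_1-n/k)\tfrac{\tau^{n+1}-1}{\tau^{n-1}}$ vanishes uniformly on every compact subset of $[1,\infty)$, so $\varphi$ converges uniformly on compact subsets to the positive function
\begin{equation*}
\varphi_0(\tau) := \frac{1}{k}\frac{\tau^n-1}{\tau^{n-1}}, \qquad \tau > 1.
\end{equation*}
By Proposition \ref{prop: T infty nk case}, $T \to \infty$, so for any fixed $M > 2$ we have $T > M$ once $\beta_1$ is sufficiently close to $n/k$, and truncating the integral gives
\begin{equation*}
L(\beta_1) \; \geq \; \int_2^M \frac{d\tau}{\sqrt{2\varphi(\tau)}} \; \xrightarrow[\beta_1 \to n/k]{} \; \int_2^M \frac{d\tau}{\sqrt{2\varphi_0(\tau)}}.
\end{equation*}
(The bounded-away-from-zero estimate $\varphi \geq \tfrac{1}{2}\varphi_0$ on $[2,M]$ for $\beta_1$ close to $n/k$ lets us invoke dominated convergence cleanly.)

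Finally, since $\varphi_0(\tau) \sim \tau/k$ as $\tau \to \infty$, one has $1/\sqrt{2\varphi_0(\tau)} \sim \sqrt{k/(2\tau)}$, so
\begin{equation*}
\int_2^M \frac{d\tau}{\sqrt{2\varphi_0(\tau)}} \longrightarrow \infty \qquad \text{as } M \to \infty.
\end{equation*}
A standard diagonal argument — pick $M$ so large that the limit integral exceeds an arbitrary $L_0$, then pick $\beta_1$ close enough to $n/k$ — combines the two limits and yields $L(\beta_1) \to \infty$, completing the proof.

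There is no serious obstacle here; the one thing to watch is the double limit (send $T$ to infinity \emph{together with} letting $\beta_1 \nearrow n/k$), and the point is that by cutting the integral at any finite $M < T$ we can decouple these. The conceptual content is that the limiting warped-product metric $\tfrac{1}{2\varphi_0}d\tau^2+2\varphi_0 d\theta^2$ on the fiber is the pullback of an asymptotically flat cone of infinite diameter, so the fiber is being stretched out to infinity from the side of $Z_{n,-k}$.
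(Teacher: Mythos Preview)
Your proof is correct and takes a genuinely different route from the paper's. Both start from the fiber length integral $\int_1^T d\tau/\sqrt{\varphi(\tau)}$, but the paper extracts divergence by estimating the contribution \emph{near the moving endpoint} $\tau=T$: it factors $\varphi$ through its roots, uses the precise rate $T\sim (n+1)/(k(n/k-\beta_1))$ from \eqref{eq: T asm behav}, and shows $\int_{T-\epsilon}^T$ is bounded below by a constant multiple of $\sqrt{\epsilon}$ for every fixed $\epsilon>0$. You instead truncate to a \emph{fixed} compact window $[2,M]$, use the uniform convergence $\varphi\to\varphi_0$ there (only the qualitative fact $T\to\infty$ from Proposition~\ref{prop: T infty nk case} is needed), and then send $M\to\infty$ using the non-integrability of $1/\sqrt{\varphi_0}\sim\sqrt{k/\tau}$ at infinity. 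Your argument is cleaner and more robust, requiring neither the root factorization nor the asymptotic rate of $T$; the paper's argument, on the other hand, localizes the blow-up near $Z_{n,-k}$, which is geometrically informative for the later pointed Gromov--Hausdorff analysis.
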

\begin{proof}
Restricted to each fiber, by \eqref{eq: metric res}, $\eta=\varphi \sqrt{-1}\textrm{d}w\wedge\textrm{d}\bar{w}/|w|^2$ gives a metric
\begin{equation*}
    g=\frac{1}{2\varphi(\tau)}\textrm{d}\tau^2+2\varphi(\tau)\textrm{d}\theta^2.
\end{equation*}
Up to some constant, the distance between $\{\tau=1\}$ and $\{\tau=T\}$ is given by
\begin{equation}\label{eq: length int nk case}
\begin{aligned}
    &\int_1^T \frac{1}{\sqrt{\varphi(\tau)}}\;\textrm{d}\tau=\\
    &\int_1^T \frac{\tau^{\frac{n-1}{2}}}{\sqrt{\frac{n/k-\beta_1}{n+1}}\cdot\sqrt{(\tau-1)(\tau-\alpha_1)\cdots(\tau-\alpha_{n-1})(T-\tau)}}\;\textrm{d}\tau.
\end{aligned}    
\end{equation}
Recall in the proof of Proposition \ref{prop: T infty nk case}, we have shown 
\begin{equation}\label{eq: T asm behav}
    T\sim \frac{1+k\beta_1}{n-k\beta_1}\sim\frac{1+n}{k}\cdot\frac{1}{n/k-\beta_1},\quad \textrm{as}\;\beta_1\nearrow n/k.
\end{equation}
For any fixed $\epsilon>0$, consider 
\begin{equation*}
    \int_{T-\epsilon}^T \frac{\tau^{\frac{n-1}{2}}}{\sqrt{\frac{n/k-\beta_1}{n+1}}\cdot\sqrt{(\tau-1)(\tau-\alpha_1)\cdots(\tau-\alpha_{n-1})(T-\tau)}}\;\textrm{d}\tau.
\end{equation*}
We can find $\beta_1$ close to $n/k$ such that
\begin{equation*}
    T-\epsilon>\frac{1}{2}T.
\end{equation*}
Then in $[T-\epsilon, T]$, we have
\begin{align}
\begin{aligned}\label{eq: items in length int}
    &\tau^{\frac{n-1}{2}}>(T-\epsilon)^{\frac{n-1}{2}}>\left(\frac{1}{2}T\right)^{\frac{n-1}{2}},\\
    &\sqrt{(\tau-1)(\tau-\alpha_1)\cdots(
    \tau-\alpha_{n-1})}<\sqrt{2(\tau^n-1)}<\sqrt{2T^n}.
    \end{aligned}
\end{align}
By \eqref{eq: T asm behav} and \eqref{eq: items in length int}, there exists a constant $C>0$ that is independent of $\epsilon$ such that 
\begin{align*}
    &\int_{T-\epsilon}^T \frac{\tau^{\frac{n-1}{2}}}{\sqrt{\frac{n/k-\beta_1}{n+1}}\cdot\sqrt{(\tau-1)(\tau-\alpha_1)\cdots(\tau-\alpha_{n-1})(T-\tau)}}\;\textrm{d}\tau\\
    &>C\cdot \frac{T^{\frac{n-1}{2}}}{\sqrt{n/k-\beta_1}\cdot T^{\frac{n}{2}}} \int_{T-\epsilon}^T \frac{1}{\sqrt{T-\tau}}\;\textrm{d}\tau\\
    &>C\epsilon.
\end{align*}
Since $\epsilon$ is arbitrarily chosen, the integral in \eqref{eq: length int nk case} diverges as $\beta_1\nearrow n/k$. Thus, we have shown $Z_{n, -k}$ gets pushed-off to infinity as $\beta_1\nearrow n/k$ if we choose a base point on $Z_{n, k}$.
\end{proof}

\begin{prop}\label{prop: b2 limit nk case}
$\displaystyle\lim_{\beta_1\nearrow \frac{n}{k}} \beta_2(\beta_1)=\frac{1}{k}$.
\end{prop}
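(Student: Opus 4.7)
The plan is to read off $\beta_2$ from the identity $\varphi'(T) = -\beta_2$ (boundary condition \eqref{eq: bd condition nk case}) and then take the limit using the explicit formula for $\varphi$ together with the asymptotics for $T$ extracted in the proof of Proposition \ref{prop: T infty nk case}. The virtue of this approach is that everything reduces to an elementary one-variable computation.

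First, I would rewrite the formula \eqref{eq: varphi general expression nk case} in the more convenient form
\[
\varphi(\tau) = \tfrac{1}{k}\bigl(\tau - \tau^{1-n}\bigr) + \tfrac{1}{n+1}\bigl(\beta_1 - \tfrac{n}{k}\bigr)\bigl(\tau^2 - \tau^{1-n}\bigr),
\]
so that
\[
\varphi'(\tau) = \tfrac{1}{k}\bigl(1 + (n-1)\tau^{-n}\bigr) + \tfrac{1}{n+1}\bigl(\beta_1 - \tfrac{n}{k}\bigr)\bigl(2\tau + (n-1)\tau^{-n}\bigr).
\]
Evaluating at $\tau = T$ and using $T \to \infty$ as $\beta_1 \nearrow n/k$ (Proposition \ref{prop: T infty nk case}), the $\tau^{-n}$ terms vanish in the limit, leaving
\[
\varphi'(T) = \tfrac{1}{k} + \tfrac{2T}{n+1}\bigl(\beta_1 - \tfrac{n}{k}\bigr) + o(1).
\]

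Second, I would pin down the precise rate $T \sim \tfrac{n+1}{k(n/k - \beta_1)}$ as $\beta_1\nearrow n/k$. This is already essentially done: Vieta's formula \eqref{eq: sum} gives $T = \tfrac{1+k\beta_1}{n-k\beta_1} - (\alpha_1 + \cdots + \alpha_{n-1})$, and in the proof of Proposition \ref{prop: T infty nk case} it was shown that $\alpha_1,\dots,\alpha_{n-1}$ converge to the non-trivial $n$th roots of unity, hence their sum stays bounded (converging to $-1$ in fact). Since $\tfrac{1+k\beta_1}{n-k\beta_1}\to\infty$, this yields $T \sim \tfrac{1+k\beta_1}{n - k\beta_1} \sim \tfrac{n+1}{k(n/k-\beta_1)}$.

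Substituting this asymptotic into the formula for $\varphi'(T)$,
\[
\tfrac{2T}{n+1}\bigl(\beta_1 - \tfrac{n}{k}\bigr) \;\longrightarrow\; \tfrac{2}{n+1}\cdot\tfrac{n+1}{k(n/k-\beta_1)}\cdot\bigl(\beta_1 - \tfrac{n}{k}\bigr) = -\tfrac{2}{k},
\]
so $\varphi'(T) \to \tfrac{1}{k} - \tfrac{2}{k} = -\tfrac{1}{k}$. Since $\varphi'(T) = -\beta_2$ by \eqref{eq: bd condition nk case}, this gives $\beta_2 \to 1/k$, as claimed.

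The only non-routine ingredient is the asymptotic rate for $T$, but this is immediate from what was already established in the proof of Proposition \ref{prop: T infty nk case}; the rest is a direct calculation. I do not foresee a genuine obstacle.
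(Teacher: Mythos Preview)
Your argument is correct, but it takes a somewhat different route from the paper's. You compute $\varphi'(T)$ directly and pass to the limit using the asymptotic $T \sim \frac{n+1}{k(n/k-\beta_1)}$, which (as you note) can be extracted from Vieta's formulas in the proof of Proposition~\ref{prop: T infty nk case}. The paper instead combines the two boundary conditions $\varphi(T)=0$ and $\varphi'(T)=-\beta_2$ algebraically to eliminate the factor $(\beta_1-\tfrac{n}{k})$ and obtain the \emph{exact} identity
\[
\Big(\tfrac{1}{k}-\beta_2\Big)T^n = \beta_1+\tfrac{1}{k},
\]
from which the conclusion follows using only $T\to\infty$, without needing the precise rate. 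The trade-off: your approach is a straight limit calculation but depends on the first-order asymptotic of $T$; the paper's approach avoids that dependence and, as a bonus, produces the clean relation \eqref{Tbeta1beta2Eq}, which is reused later (for instance in Lemma~\ref{lem: beta2beta1small} on the small-angle regime). Both are elementary and of comparable length.
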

\begin{proof}
We calculate 
\begin{equation*}
    \frac{\textrm{d}\varphi}{\textrm{d}\tau}=\frac{1}{k}\frac{\tau^{n-2}}{(\tau^{n-1})^2}(n+\tau^n-1)+\frac{1}{n+1}(\beta_1-\frac{n}{k})\frac{\tau^{n-2}}{(\tau^{n-1})^2}(2\tau^{n+1}+n-1).
\end{equation*}
By $\varphi_\tau(T)=-\beta_2$ we have
\begin{equation}\label{eq: phi T general}
    \frac{1}{k T^n}(n+T^n-1)+\frac{1}{n+1}(\beta_1-\frac{n}{k})\frac{1}{T^n}(2T^{n+1}+n-1)=-\beta_2.
\end{equation}
Recall $\varphi(T)=0$, then we have
\begin{equation}\label{eq: phi T zero general}
    \frac{T^n-1}{k}+\frac{1}{n+1}(\beta_1-\frac{n}{k})(T^{n+1}-1)=0.
\end{equation}
Combining \eqref{eq: phi T general} and \eqref{eq: phi T zero general},
\begin{equation}
\label{Tbeta1beta2Eq}
    \Big(\frac{1}{k}-\beta_2\Big)T^n=\beta_1+\frac{1}{k}.
\end{equation}
Since $T\to \infty$ as $\beta_1\nearrow n/k$, there holds $\beta_2\to1/k$ as $\beta_1\nearrow n/k$.
\end{proof}

In Section \ref{sec: gh kee}, we use asymptotic behaviors discussed above to study the limit metric of $\eta$ as $\beta_1$ approaches $n/k$ or $0$. In the remainder of this section, we focus on the family of metrics $\xi$ that is parametrized by $\beta_2$. Inspired by Proposition \ref{prop: b2 limit nk case}, we study the asymptotic behaviors of $\xi$ as $\beta_2\nearrow 1/k$. 

\begin{prop}
\label{PropBeta2Beta1LimitAsymp}
When $\beta_2 \nearrow \frac{1}{k}$, as an analogue of \eqref{Tbeta1beta2Eq}, there holds
\begin{equation*}
    \left(\frac{1}{k}+\beta_1\right)t^n=\frac{1}{k}-\beta_2.
\end{equation*}
\end{prop}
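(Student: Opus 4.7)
My approach is to mimic the algebraic derivation carried out in the proof of Proposition \ref{prop: b2 limit nk case} (which established the analogous equation \eqref{Tbeta1beta2Eq} for $\eta_{\beta_1}$), now applied to the family $\xi_{\beta_2}$. The essential inputs are the explicit profile of $\varphi$ on $[t,1]$ supplied by Proposition \ref{prop: red to ode b2 case}, together with the two boundary conditions at the left endpoint coming from Proposition \ref{prop: bd cond b2 case}, namely $\varphi(t) = 0$ and $\varphi'(t) = \beta_1$. Compared with the $\eta_{\beta_1}$ case, the sign in front of the $(\tau^{n+1}-1)$ term in $\varphi$ is flipped and the relevant endpoint is the smaller root $t<1$ rather than the larger root $T>1$, so the computation is structurally parallel but requires careful sign bookkeeping.

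Concretely, I would differentiate the closed-form expression of $\varphi$ by the quotient rule to obtain $\varphi'$ in a form analogous to the displayed formula in the proof of Proposition \ref{prop: b2 limit nk case}, then evaluate at $\tau = t$ and multiply through by $t^n$. The boundary condition $\varphi(t) = 0$ provides a second identity that pins down $\tfrac{n/k + \beta_2}{n+1}(t^{n+1}-1)$ in terms of $\tfrac{1}{k}(t^n - 1)$. The key algebraic move is the decomposition $2t^{n+1} + n - 1 = 2(t^{n+1} - 1) + (n+1)$, which allows one to substitute $\varphi(t) = 0$ directly into the equation coming from $\varphi'(t) = \beta_1$ so as to eliminate all dependence on $t^{n+1}$. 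After cancellation one is left with a simple linear relation in $\beta_1$, $\beta_2$, and $t^n$ that rearranges to the claimed identity $\left(\tfrac{1}{k} + \beta_1\right) t^n = \tfrac{1}{k} - \beta_2$.

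Since the derivation consists solely of algebraic manipulations of the boundary conditions, no estimates or limits are required and the identity in fact holds exactly for every $\beta_2 \in (0, 1/k)$. The qualifier \emph{when} $\beta_2 \nearrow 1/k$ in the statement is merely context for how the identity will subsequently be applied: because $\beta_1 > 0$, the right-hand side tends to zero in this limit, and so the identity immediately forces $t \searrow 0$, which is the analogue of the asymptotic $T \to \infty$ derived in Proposition \ref{prop: b2 limit nk case}. I do not anticipate any substantial obstacle beyond tracking the sign changes relative to the template computation for $\eta_{\beta_1}$.
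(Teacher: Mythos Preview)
Your proposal is correct and follows essentially the same route as the paper: differentiate the explicit expression for $\varphi$ from Proposition \ref{prop: red to ode b2 case}, evaluate at $\tau=t$ using $\varphi'(t)=\beta_1$, and then use $\varphi(t)=0$ together with the decomposition $2t^{n+1}+n-1=2(t^{n+1}-1)+(n+1)$ to eliminate $t^{n+1}$ and arrive at the claimed linear relation. Your observation that the identity holds for every $\beta_2\in(0,1/k)$, not just in the limit, is also accurate.
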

\begin{proof}
Let $\varphi(\tau)$ and $t$ be as in \eqref{eq: bd cond nk b2 case} and \eqref{eq: beta2range}. We calculate
\begin{equation*}
    \frac{\textrm{d} \varphi}{\textrm{d}\tau} = \frac{1}{k}\frac{\tau^{n-2}}{(\tau^{n-1})^2}(\tau^n+n-1)-\frac{1}{n+1}\left(\frac{n}{k}+\beta_2\right)\frac{\tau^{n-2}}{(\tau^{n-1})^2}(2\tau^{n+1}+n-1).
\end{equation*}
By the fact that $\textrm{d}\varphi/\textrm{d}\tau(t)=\beta_1$, 
\begin{equation*}
    \beta_1 = \frac{1}{k t^n}(n+t^n-1)-\frac{1}{n+1}\left(\frac{n}{k}+\beta_2\right)\frac{1}{t^n}(2t^{n+1}+n-1).
\end{equation*}
Combining this with the fact that $\varphi(t)=0$, there holds
\begin{equation*}
\begin{aligned}
    \beta_1 t^n &= \frac{1}{k}(n+t^n-1)-\frac{2(t^n-1)}{k}-\frac{n}{k}-\beta_2\\
    & = -\frac{1}{k}t^n+\frac{1}{k}-\beta_2,
\end{aligned}
\end{equation*}
which implies
\begin{equation*}
    \left(\beta_1+\frac{1}{k}\right)t^n=\frac{1}{k}-\beta_2
\end{equation*}
as claimed.

\end{proof}

\begin{prop}\label{prop: length finite nkb2 case}
As $\displaystyle \beta_2\nearrow \frac{1}{k}$, $\displaystyle t = O\left(\left(\frac{k}{n+1}\right)^{\frac{1}{n}}\left(\frac{1}{k}-\beta_2\right)^{\frac{1}{n}}\right)$. In particular, $t$ tends to $0$ when $\beta_2\nearrow 1/k$. Moreover, the length of the path on each fiber between the intersection point of the fiber with $Z_{n, k}$ and that of the fiber with $Z_{n, -k}$ converges to a finite number as $\beta_2\nearrow 1/k$.
\end{prop}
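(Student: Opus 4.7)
The plan splits into two parts: establishing the $O$-estimate for $t$ (which immediately gives $t\to 0$), and showing that the fiberwise length has a finite limit.

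\emph{Asymptotic for $t$.} This is immediate from Proposition \ref{PropBeta2Beta1LimitAsymp}. Since $\beta_1 > 0$, the identity $(\beta_1 + 1/k)t^n = 1/k - \beta_2$ gives $t^n < k(1/k - \beta_2)$, hence $t = O((1/k - \beta_2)^{1/n})$. To match the precise stated constant $(k/(n+1))^{1/n}$, one uses $\beta_1 \to n/k$ as $\beta_2 \to 1/k$, which follows from the continuity of the correspondence $\beta_1 \leftrightarrow \beta_2$ built from Proposition \ref{prop: red to ode} together with the endpoint limit in Proposition \ref{prop: b2 limit nk case}; then $\beta_1 + 1/k \to (n+1)/k$ and $t^n \sim \frac{k(1/k - \beta_2)}{n+1}$.

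\emph{Length integral.} As in \eqref{eq: metric res} (the restriction of $\xi$ to a fiber has the same form in the coordinate $u = 1/w$), the length of the path from $\{\tau = t\}$ to $\{\tau = 1\}$ is a fixed constant multiple of
\begin{equation*}
L(\beta_2) := \int_t^1 \frac{d\tau}{\sqrt{\varphi(\tau)}}.
\end{equation*}
Setting $\delta := \tfrac{k}{n+1}(1/k - \beta_2)$, so that $\delta \to 0$, a direct computation yields
\begin{equation*}
k\tau^{n-1}\varphi(\tau) = \tau^n(1-\tau) - \delta(1 - \tau^{n+1}) = (1-\tau)\bigl[\tau^n - \delta(1 + \tau + \cdots + \tau^n)\bigr].
\end{equation*}
Call the bracketed polynomial $R(\tau)$. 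It vanishes at $\tau = t$ and is positive on $(t, 1]$ (as $\varphi > 0$ on $(t,1)$ and $R(1) = 1 - (n+1)\delta = k\beta_2 > 0$), so I factor $R(\tau) = (\tau - t)\widetilde R(\tau)$ with $\widetilde R > 0$ on $[t,1]$. The substitution $\tau = t + (1-t)u$ with $u \in [0,1]$ absorbs both the moving endpoint and the endpoint zeros of $\varphi$:
\begin{equation*}
L(\beta_2) = \int_0^1 \sqrt{\frac{k\,(t + (1-t)u)^{n-1}}{u(1-u)\,\widetilde R(t + (1-t)u)}}\, du.
\end{equation*}
As $\beta_2 \nearrow 1/k$ one has $t, \delta \to 0$, so $R(\tau) \to \tau^n$ locally uniformly on $(0,1]$ and hence $\widetilde R(\tau) \to \tau^{n-1}$; the integrand converges pointwise on $(0,1)$ to $\sqrt{k/(u(1-u))}$. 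I would conclude by dominated convergence that
\begin{equation*}
L(\beta_2) \longrightarrow \int_0^1 \sqrt{\frac{k}{u(1-u)}}\, du = \pi\sqrt{k} < \infty.
\end{equation*}

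\emph{Main obstacle.} The delicate step is producing a $\beta_2$-uniform integrable dominator in the normalized form, especially near $u = 0$, where $\widetilde R(t + (1-t)u)$ is being evaluated close to the moving zero of $R$ at $\tau = t$. A uniform lower bound of the form $\widetilde R(t + (1-t)u) \ge c\,(t + (1-t)u)^{n-1}$—matching the pointwise limit $\widetilde R(\tau) \sim \tau^{n-1}$—will suffice; I expect to establish this from the explicit polynomial form of $R$ combined with the relation $t^n \approx \delta$ derived from $R(t) = 0$.
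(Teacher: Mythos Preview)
Your proposal is correct and is actually more explicit than the paper's own argument. For the asymptotic of $t$, the paper factors $\varphi(\tau)/(\tau-1)$ and observes that all roots of the remaining degree-$n$ polynomial are $O\big((k/(n+1))^{1/n}(1/k-\beta_2)^{1/n}\big)$; your route via the identity $(\beta_1+1/k)t^n=1/k-\beta_2$ of Proposition~\ref{PropBeta2Beta1LimitAsymp} is shorter, and you can avoid the mild circularity in invoking $\beta_1\to n/k$ by simply using the a priori range $\beta_1\in(0,n/k)$ (the $\xi$-family is a rescaling of the $\eta$-family), which already gives two-sided bounds $\tfrac{k}{n+1}(1/k-\beta_2)\le t^n\le k(1/k-\beta_2)$. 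For the length, the paper only refers to the endpoint estimates of Propositions~\ref{prop: length to infty nk case} and~\ref{prop: length finite k1 case}, i.e.\ splits $\int_t^1 d\tau/\sqrt{\varphi}$ near $\tau=t$ and $\tau=1$ and bounds each piece by a convergent $\int dx/\sqrt{x}$; your substitution $\tau=t+(1-t)u$ with dominated convergence is a genuinely different packaging that additionally yields the explicit limit $\pi\sqrt{k}$. The dominator you flag as the main obstacle is indeed obtainable along the lines you sketch: from $R(t)=0$ one has $\delta\le t^n$, whence $\widetilde R(t)=R'(t)=nt^{n-1}(1+o(1))$, and splitting $[t,1]=[t,2t]\cup[2t,1]$ one checks $\widetilde R(\tau)\ge c\,\tau^{n-1}$ uniformly (on $[2t,1]$ via $R(\tau)\ge \tau^n-(n+1)t^n\ge c\tau^n$ and $\tau-t\le\tau$; on $[t,2t]$ via the mean value form $\widetilde R(\tau)=R'(c)$ with $R'(c)\gtrsim t^{n-1}\gtrsim\tau^{n-1}$), giving the integrable majorant $C/\sqrt{u(1-u)}$.
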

\begin{proof}
Recall $t$ is a root of 
\begin{equation*}
    \varphi(\tau)=\frac{1}{k}\frac{\tau^n -1}{\tau^{n-1}}-\frac{1}{n+1}\left(\frac{n}{k}+\beta_2\right)\frac{\tau^{n+1}-1}{\tau^{n-1}}=0.
\end{equation*}
Direct calculations yield
\begin{equation*}
    \frac{\varphi(\tau)}{\tau-1}=\frac{1}{\tau^{n-1}}\left(-\frac{1}{n+1}\left(\frac{n}{k}+\beta_2\right)\tau^n+\frac{1/k-\beta_2}{n+1}(\tau^{n-1}+\cdots+\tau+1)\right).
\end{equation*}
Then it is easy to see that every root of $\varphi(\tau)$ except for $\tau=1$, denoted by $\alpha$, satisfies that $|\alpha| = O((k/(n+1))^{1/n}(1/k-\beta_2)^{1/n})$ when $\beta_2\nearrow 1/k$. In particular, as $\beta_2\nearrow 1/k$, all roots of $\varphi(\tau)=0$ except for $\tau=1$ converge to $0$. We conclude that $t\to 0$. 
To see that the length between $Z_{n, k}$ and $Z_{n, -k}$ converges to a finite number in the limit, one follows the arguments in the proof of Proposition \ref{prop: length to infty nk case} and Proposition \ref{prop: length finite k1 case}.
\end{proof}

\section{Large angle limits}\label{sec: gh kee}
In this section, we first study the Gromov--Hausdorff limit of the family of metrics $\eta$ when the parameter $\beta_1$ approaches $n/k$.

\begin{thm}\label{thm: metric limit nk case}
Fix an arbitrary base point $p$ on $Z_{n, k}$. As $\beta_1\nearrow n/k$, the K\"{a}hler--Einstein edge metric $(\mathbb{F}_{n, k}, \eta_{\beta_1}, p)$ converges in the pointed Gromov--Hausdorff sense to a Ricci-flat K\"{a}hler edge metric $(-kH_{\mathbb{P}^{n-1}}, \eta_\infty, p)$ described in \eqref{eq: limit eta nk case}. Moreover, away from $Z_{n, k}$, one has $\eta_{\beta_1}\xrightarrow{C^k_{\mathrm{loc}}}\eta_\infty$ for any $k\geq0$ on the total space of $-kH_{\mathbb{P}^{n-1}}$ (seen as an open subset in $\mathbb F_{n,k}$), and the limit metric $\eta_\infty$ coincides with the model metric $\omega_{\mathrm{eh}, n, k}$ defined in Section \ref{sec: model}.
\end{thm}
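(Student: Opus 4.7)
The plan is to prove smooth convergence of $\eta_{\beta_1}$ to $\omega_{\mathrm{eh},n,k}$ on compact subsets of $\mathbb{F}_{n,k}\setminus Z_{n,-k}\cong -kH_{\mathbb{P}^{n-1}}$, and then upgrade this to pointed Gromov--Hausdorff convergence based at $p\in Z_{n,k}$ using Proposition \ref{prop: length to infty nk case}, which pushes $Z_{n,-k}$ off to infinity. Since both metrics arise from a Calabi-type ansatz in the form \eqref{eq: eta in nk case}, the comparison reduces to comparing the pairs $(\tau(s),\varphi(\tau))$, with $s$ as in \eqref{eq: def s}, which is an intrinsic coordinate defined on both spaces.

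First I would identify the model in this language. Differentiating $\lambda=(e^{\frac{n}{k}s}+1)^{1/n}$ gives $d\lambda/ds=(\lambda^n-1)/(k\lambda^{n-1})$, and substitution into $f(\lambda)=k\lambda+k\int d\lambda/(\lambda^n-1)$ yields $f'(s)=\lambda$. Hence for $\omega_{\mathrm{eh},n,k}$ one has $\tau_\infty(s)=\lambda(s)$ and $\varphi_\infty(\tau)=(\tau^n-1)/(k\tau^{n-1})$ with $\tau\in[1,\infty)$. This is exactly the pointwise limit of the formula \eqref{eq: varphi general expression nk case} as $\beta_1\nearrow n/k$, and the convergence $\varphi_{\beta_1}\to\varphi_\infty$ is uniform (indeed smooth) on every $[1,R]$.

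Next I would deduce smooth convergence of the metrics on compact subsets. The one-form $\alpha$ and the factors $\pi_1^*\omega_{\operatorname{FS}}$, $\pi_2^*\omega_{\operatorname{Cyl}}$ in \eqref{eq: eta in nk case} are intrinsic to $-kH_{\mathbb{P}^{n-1}}$, so comparing $\eta_{\beta_1}$ and $\omega_{\mathrm{eh},n,k}$ amounts to comparing $\tau_{\beta_1}(s)$ and $\varphi_{\beta_1}(\tau_{\beta_1}(s))$ with their model counterparts. After a $\beta_1$-dependent rescaling of the fiber coordinate $w$ (which preserves the zero section and hence the base point $p$), I may normalize $\tau_{\beta_1}(0)=\tau_\infty(0)$, and standard continuous dependence of solutions of $d\tau/ds=\varphi_{\beta_1}(\tau)$ then yields $\tau_{\beta_1}\to\tau_\infty$ in $C^\infty_{\mathrm{loc}}$. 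Plugging this back into \eqref{eq: eta in nk case} gives smooth convergence $\eta_{\beta_1}\to\omega_{\mathrm{eh},n,k}$ on every compact subset of $-kH_{\mathbb{P}^{n-1}}$ disjoint from $Z_{n,k}$; Ricci-flatness of the limit is automatic since $\lambda=n/k-\beta_1\to 0$.

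Finally I would pass to pointed Gromov--Hausdorff convergence at $p\in Z_{n,k}$. Given $R>0$, Proposition \ref{prop: length to infty nk case} ensures that for $\beta_1$ close enough to $n/k$ the $\eta_{\beta_1}$-ball $B_R(p)$ is contained in a fixed compact subset of $-kH_{\mathbb{P}^{n-1}}$, so it suffices to construct $\varepsilon$-approximations on such compact sets. Away from a tubular neighborhood of $Z_{n,k}$ the smooth convergence above does this immediately. The main obstacle is near $Z_{n,k}$, where the cone angle itself is varying ($2\pi\beta_1\to 2\pi n/k$); here I would work in edge-adapted coordinates and use that $\varphi_{\beta_1}(1)=\varphi_\infty(1)=0$ together with $\varphi_{\beta_1}'(1)=\beta_1\to n/k=\varphi_\infty'(1)$, so the local model two-dimensional edge factors (whose Gromov--Hausdorff distance depends continuously on the cone angle) converge as well. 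Combining these two regimes via a standard cut-and-paste $\varepsilon$-net argument yields the pointed Gromov--Hausdorff convergence; the identification of the limit with $\omega_{\mathrm{eh},n,k}$ from Section \ref{sec: model} is then immediate from the matching Calabi-ansatz data computed in the second step.
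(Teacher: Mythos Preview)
Your proposal is correct and follows essentially the same approach as the paper: both reduce to continuous dependence of the ODE $d\tau/ds=\varphi_{\beta_1}(\tau)$ on the parameter $\beta_1$, compute the limit $\varphi_\infty(\tau)=(\tau^n-1)/(k\tau^{n-1})$ explicitly and match it with the model via the same fiber-coordinate rescaling, and then invoke Proposition \ref{prop: length to infty nk case} to upgrade local smooth convergence to pointed Gromov--Hausdorff convergence based at $p\in Z_{n,k}$. Your additional step treating the varying cone angle near $Z_{n,k}$ in edge-adapted coordinates is a point of care the paper leaves implicit.
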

\begin{proof}
We use notation $\eta_{\beta_1}, \tau(\beta_1)$ and $\varphi(\beta_1)$ to emphasize the dependence of metrics and coordinates on $\beta_1$. Combining \eqref{eq: defi of tau and varphi nk case} and \eqref{eq: varphi general expression nk case}, we have
\begin{equation}\label{eq: relation s and tau}
    \frac{\textrm{d}s}{\textrm{d}\tau(\beta_1)}=\frac{1}{\varphi(\beta_1)}=\frac{\tau(\beta_1)^{n-1}}{\frac{1}{k}(\tau(\beta_1)^n-1)+\frac{\beta_1-n/k}{n+1}(\tau(\beta_1)^{n+1}-1)}.
\end{equation}

\begin{claim}\label{clm: tpinfty}
The pointwise limits of functions
\begin{align*}
    \tau_\infty(s):&=\lim_{\beta_1\nearrow n/k}\tau(\beta_1, s),\quad s\in(-\infty, +\infty),\\
    \varphi_\infty(s):&=\lim_{\beta_1\nearrow n/k}\varphi({\beta_1}, s),\quad s\in(-\infty, +\infty)
\end{align*}
exist. Moreover, $\tau_\infty(s)$ and $\varphi_\infty(s)$ are smooth in $s$.
\end{claim}

\begin{proof}[Proof of the Claim]
By the continuous dependence of ODEs in \eqref{eq: relation s and tau} on the parameter $\beta_1$, the pointwise limit function $\tau_\infty$ exists. Since $\tau(\beta_1, s)$ is smooth in $s$ for each $\beta_1$, $\tau_\infty(s)$ is smooth in $s$. By \eqref{eq: varphi general expression nk case} and the existence of $\tau_\infty(s)$, $\varphi_\infty(s)$ also exists. Moreover, $\varphi_\infty(s)$ is smooth in $s$ due to the smoothness of $\tau_\infty(s)$.
\end{proof}

Thanks to Claim \ref{clm: tpinfty}, $\tau_\infty$ and $\varphi_\infty$ satisfy
\begin{align}
\begin{aligned}\label{eq: tau infty and varphi infty}
    \frac{\textrm{d}s}{\textrm{d}\tau_{\infty
    }}&=\frac{\tau^{n-1}_\infty}{\frac{1}{k}(\tau_\infty^n-1)},\\
    \varphi_\infty(\tau_\infty)&=\frac{1}{k}\frac{\tau^n_\infty-1}{\tau^{n-1}_\infty}.
    \end{aligned}
\end{align}
Solving the first equation in \eqref{eq: tau infty and varphi infty}, we get
\begin{equation}\label{eq: taui ex formula}
    \tau_\infty(s) = (1+e^{(s-C)\frac{n}{k}})^{1/n}, \quad \text{for some constant}\; C.
\end{equation}
By considering a change of coordinate $w' = C' w$ for some appropriate $C'$ in \eqref{eq: def s}, we may choose $C$ in \eqref{eq: taui ex formula} to be $0$. Plugging \eqref{eq: taui ex formula} into the second equation in \eqref{eq: tau infty and varphi infty}, one finds 
\begin{equation}\label{eq: varphiinf ex for}
    \varphi_\infty(s)=\frac{1}{k}\cdot \frac{e^{s\cdot\frac{n}{k}}}{(e^{s\cdot\frac{n}{k}}+1)^{\frac{n-1}{n}}}.
\end{equation}
Plugging \eqref{eq: taui ex formula} and \eqref{eq: varphiinf ex for} into
\eqref{eq: eta in nk case}, we obtain the convergence of K\"{a}hler--Einstein edge metric $\eta_{\beta_1}$ on any compact subsets of $\mathbb{F}_{n, k}$ in every $C^k$-norm to the following metric:
\begin{align}
\begin{aligned}\label{eq: limit eta nk case}
    \eta_\infty:&=\lim_{\beta_1\nearrow n/k}\eta_{\beta_1}\\
    &=
    k (1+e^{s\cdot\frac{n}{k}})^{1/n} \pi_1^*\omega_{\operatorname{FS}} +
    \frac{1}{k}\cdot \frac{e^{s\cdot\frac{n}{k}}}{(e^{s\cdot\frac{n}{k}}+1)^{\frac{n-1}{n}}} \vphantom{\frac{\textrm{d} w}{w}}\left(\pi_2^*\omega_{\operatorname{Cyl}}+\sqrt{-1}\alpha\wedge\bar{\alpha}\right.\\
    &\left.+\sqrt{-1}\alpha\wedge\frac{\textrm{d}\bar{w}}{\bar{w}}+\sqrt{-1}\frac{\textrm{d}w}{w}\wedge\bar{\alpha}
    \right).
    \end{aligned}
\end{align}
Recall by \eqref{eq: ric curv gene}, the Ricci curvature of $\eta_{\beta_1}$ is given by $\lambda_{\beta_1}=n/k-\beta_1$, which converges to $0$ as $\beta_1 \nearrow n/k$. Thus, $\eta_\infty$ is a Ricci-flat K\"{a}hler edge metric on $-kH_{\mathbb{P}^{n-1}}$. $\eta_\infty$ has edge singularity of angle $2n\pi/k$ along $Z_{n, k}\subset -kH_{\mathbb{P}^{n-1}}$. Indeed, $\eta_\infty$ coincides with the model metric $\omega_{\mathrm{eh}, n, k}$ defined in Section \ref{sec: model}. To obtain the convergence in the pointed Gromov--Hausdorff sense, we first recall by Proposition \ref{prop: length to infty nk case} the distance between $Z_{n, -k}$ and $Z_{n, k}$ tends to infinity as $\beta_1 \nearrow n/k$. Once we choose a base point on $Z_{n, k}$. Since $\eta_{\beta_1}$ converges to $\eta_\infty$ on any compact geodesic balls centered at the base point, we conclude that $\eta_{\beta_1}$ converges in the pointed Gromov--Hausdorff sense to $\eta_\infty$ on $-kH_{\mathbb{P}^{n-1}}$.
\end{proof}

\begin{rmk}\label{rmk: eh as limit}
If we let $n=k=2$ in Theorem \ref{thm: metric limit nk case}, then by Remark \ref{rmk: eh spec case} we obtain in the limit the Eguchi--Hanson metric with parameter $\epsilon$ set as $1$ (see \eqref{eq: eh metric potential form}). In other words, the Eguchi--Hanson metric arises as the pointed Gromov--Hausdorff limit of \KE edge metrics $\eta_{\beta_1}$ when $\beta_1\nearrow 1$. This interesting observation has been conjectured in our previous work \cite[Remark 5.1]{RZ21} and provided some of the
motivation for the present article.
\end{rmk}


Next, we fix a base point on the infinity section $Z_{n, -k}$ to study the Gromov--Hausdorff limit of the family of \KE edge metrics $\xi$ on $\mathbb{F}_{n, k}$. In the limit, we obtain an orbifold \KE edge metric instead of the Ricci-flat edge metric obtained in Theorem \ref{thm: metric limit nk case}.

From now on, we use $\xi_{\beta_2}, \tau(\beta_2)$ and $\varphi(\beta_2)$ to emphasize the dependence of metrics and coordinates on $\beta_2$. We consider the case $\beta_2\nearrow 1/k$.




\begin{thm}\label{thm: conv mod2}
Fix an arbitrary base point $p$ on the infinity section $Z_{n, -k}$. As $\beta_2\nearrow 1/k$, the \KE edge metric $(\mathbb{F}_{n, k}, \xi_{\beta_2}, p)$ on $\mathbb{F}_{n, k}$ converges in the pointed Gromov--Hausdorff sense to an orbifold \KE edge metric $(\mathbb{P}^n(1,\dots,1,k), \xi_\infty, p)$ on the weighted projective space $\mathbb{P}^n(1,\dots,1,k)$ with an edge singularity of angle $2\pi/k$ along $Z_{n, -k}$. This limit metric coincides with the model metric $\omega_{\mathrm{orb}, n, k}$ defined in Definition \ref{def_ModOnWps}.
\end{thm}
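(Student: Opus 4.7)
The plan is to mirror the structure of the proof of Theorem \ref{thm: metric limit nk case}, but now for the family $\xi_{\beta_2}$ on the parameter interval $\tau\in[t,1]$ and letting $\beta_2\nearrow 1/k$. By Proposition \ref{prop: red to ode b2 case}, one has the ODE
$$
\frac{\textrm{d}s}{\textrm{d}\tau(\beta_2)}=\frac{1}{\varphi(\beta_2)}=\frac{\tau(\beta_2)^{n-1}}{\frac{1}{k}(\tau(\beta_2)^n-1)-\frac{1}{n+1}\left(\frac{n}{k}+\beta_2\right)(\tau(\beta_2)^{n+1}-1)},
$$
with smooth dependence on $\beta_2$. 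First I would invoke continuous dependence to obtain pointwise smooth limits $\tau_\infty(s)$ and $\varphi_\infty(s)$, and solve the limiting ODE (in which the coefficient of the top-degree term simplifies since $n/k+\beta_2\to(n+1)/k$). A direct computation shows that in the limit $\varphi_\infty(\tau_\infty)=\tau_\infty(1-\tau_\infty)/k$, which integrates to an explicit formula $\tau_\infty(s) = 1-(e^{s/k}+1)^{-1}$ after a harmless translation in $s$, exactly recovering the coordinate $\nu$ from Definition \ref{def_ModOnFnk}. Plugging into \eqref{eq: xi metric} would then show that $\xi_{\beta_2}$ converges in every $C^\ell$-norm on compact subsets of $\mathbb{F}_{n,k}\setminus Z_{n,k}$ to the model metric $\tilde{\omega}_{\mathrm{orb},n,k}$, which on that set coincides with $\pi^\ast\omega_{\mathrm{orb},n,k}$ by Definition \ref{def_ModOnWps}.

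Next I would identify the limit as an orbifold metric on $\mathbb{P}^n(1,\dots,1,k)$. By Proposition \ref{prop: length finite nkb2 case}, the distance between $Z_{n,k}$ and $Z_{n,-k}$ along a fiber stays bounded as $\beta_2\nearrow 1/k$, so after fixing a base point on $Z_{n,-k}$ the sequence of pointed spaces has uniformly bounded diameter. Simultaneously, Proposition \ref{prop: length finite nkb2 case} gives $t\to 0$, and from \eqref{eq: xi metric} the leading term $k\tau\,\pi_1^\ast\omega_{\operatorname{FS}}$ on $Z_{n,k}$ scales like $kt\to 0$, so the induced intrinsic diameter of $Z_{n,k}$ tends to zero, i.e.\ the entire zero section collapses to a single point in the limit. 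By Lemma \ref{lem_WpsAndCh}, $\mathbb{F}_{n,k}$ is the blow-up of $\mathbb{P}^n(1,\dots,1,k)$ at the orbifold point $[0:\cdots:0:1]$ with exceptional divisor $Z_{n,k}$, so this collapse precisely realizes the blow-down map $\pi$ of Lemma \ref{lem_WpsAndCh}, producing a GH limit homeomorphic to $\mathbb{P}^n(1,\dots,1,k)$ with the collapsed point being the $\mathbb{C}^n/\mathbb{Z}_k$ orbifold singularity.

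Finally, the metric claims follow from passage to the limit in the construction: Proposition \ref{prop: red to ode b2 case} gives Ricci curvature $\mu=n/k+\beta_2\to(n+1)/k$, and the edge angle $2\pi\beta_2\to 2\pi/k$ along $Z_{n,-k}$ is preserved. Combined with the smooth/edge convergence on compact subsets of the complement of a shrinking neighborhood of $Z_{n,k}$, one concludes $\xi_\infty=\omega_{\mathrm{orb},n,k}$.

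The main obstacle will be upgrading the smooth convergence on compact subsets of $\mathbb{F}_{n,k}\setminus Z_{n,k}$ to genuine pointed Gromov--Hausdorff convergence on all of $\mathbb{F}_{n,k}$ through the collapsing divisor. The delicate point is to show uniform control on the intrinsic distance of points near $Z_{n,k}$: one must verify that, as $\beta_2\nearrow 1/k$, the $\xi_{\beta_2}$-diameter of a tubular neighborhood of $Z_{n,k}$ in $\mathbb{F}_{n,k}$ tends to zero at a rate that allows a standard $\varepsilon$-net argument. This should follow by integrating $1/\sqrt{\varphi(\beta_2)}$ from $t$ to a small $\tau_0>0$ using the explicit factorization of $\varphi(\beta_2)$ and the asymptotic $t=O((1/k-\beta_2)^{1/n})$ from Proposition \ref{prop: length finite nkb2 case}, in direct analogy with the length computations in Section \ref{sec: asym of kee}, but now yielding a vanishing (rather than divergent) contribution near $\tau=t$.
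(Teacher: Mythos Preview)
Your proposal is correct and follows essentially the same approach as the paper: the paper likewise passes to the limit in the ODE to obtain $\varphi_\infty(\tau_\infty)=\tau_\infty(1-\tau_\infty)/k$ and $\tau_\infty(s)=1-(e^{s/k}+1)^{-1}$, identifies the limiting tensor with $\tilde{\omega}_{\mathrm{orb},n,k}$ (and hence with $\omega_{\mathrm{orb},n,k}$ via the blow-down of Lemma \ref{lem_WpsAndCh}), and then invokes Proposition \ref{prop: length finite nkb2 case} together with local smooth convergence to conclude pointed Gromov--Hausdorff convergence. In fact your discussion of the collapsing of $Z_{n,k}$ and the uniform control near $\tau=t$ is more explicit than what the paper provides, which simply asserts the GH limit from Proposition \ref{prop: length finite nkb2 case} and the local convergence without further detail.
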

\begin{proof}
By similar notation and calculations as in the proof of Theorem \ref{thm: metric limit nk case}, we have

\begin{equation*}
    \frac{\textrm{d}s}{\textrm{d}\tau(\beta_2)} = \frac{1}{\varphi(\beta_2)} = \frac{\tau(\beta_2)^{n-1}}{\frac{1}{k}(\tau(\beta_2)^n-1)-\frac{n/k+\beta_2}{n+1}(\tau(\beta_2)^{n+1}-1)}.
\end{equation*}
As $\beta_2\nearrow 1/k$, we have
\begin{equation}\label{eq_OdeTs}
\begin{aligned}
    \frac{\textrm{d}s}{\textrm{d}\tau_\infty} &= \frac{k}{\tau_\infty-\tau_\infty^2},\\
    \varphi_\infty(\tau_\infty) &= \frac{1}{k}(\tau_\infty-\tau_\infty^2). 
\end{aligned}    
\end{equation}
Solving \eqref{eq_OdeTs} and considering a change of coordinate $u'=C' u$ for some appropriate $C$, we have
\begin{align*}
    \tau_\infty(s)&=1-\frac{1}{e^{\frac{s}{k}}+1},\quad s\in(-\infty, +\infty),\\
    \varphi_\infty(s)&=\frac{1}{k}\frac{e^{s/k}}{(e^{s/k}+1)^2},\quad s\in(-\infty, +\infty).
\end{align*}
Thus, the limit metric on $\mathbb{F}_{n, k}$ is as follows:
\begin{align*}
    \tilde{\xi}_\infty&=k\frac{e^{s/k}}{e^{s/k}+1}\pi_1^*\omega_{\operatorname{FS}}+\frac{1}{k}\frac{e^{s/k}}{(e^{s/k}+1)^2}\left(\pi_2^* \omega_{\operatorname{Cyl}}+\sqrt{-1}\alpha\wedge\bar{\alpha}\right.\\
    &\left.+\sqrt{-1}\alpha\wedge\frac{\textrm{d}\bar{w}}{\bar{w}}+\sqrt{-1}\frac{\textrm{d}w}{w}\wedge\bar{\alpha}\right).
\end{align*}
Recall the Ricci curvature $\mu_{\beta_2}$ is given in \eqref{eq: beta2range} by $n/k+\beta_2$ and converges to $(n+1)/k$ in the limit. Thus $\tilde{\xi}_{\infty}$ has Ricci curvature $(n+1)/k$. Moreover, $\tilde{\xi}_{\infty}$ has an edge singularity of angle $2\pi/k$ along $Z_{n, -k}$. It degenerates on $Z_{n, k}$ since $\tau\equiv 0$ on $Z_{n, k}$. Indeed, $\tilde{\xi}_\infty$ coincides with the model metrics defined in Definition \ref{def_ModOnFnk}. Then by Definition \ref{def_ModOnWps}, we denote by $\xi_\infty$ the model metric on $\mathbb{P}^n(1,\dots, 1,k)$ that is the pull-back of $\tilde{\xi}_\infty$ under the blow up map.

We have shown that $\tilde{\xi}_\infty$ is the limit of $\xi_{\beta_2}$ as tensors in the pointwise smooth sense. Next, fix an arbitrary base point on $Z_{n, -k}$. By Proposition \ref{prop: length finite nkb2 case} and the local smooth convergence result, we conclude that $(\mathbb{P}^{n}(1,\dots,1,k), \xi_\infty)$ is the limit of $(\mathbb{F}_{n, k}, \xi_{\beta_2})$ in the pointed Gromov--Hausdorff sense when $\beta_2\nearrow 1/k$. Moreover, the limit metric coincides with the model metric $\omega_{\mathrm{orb}, n, k}$ defined in Section \ref{sec: model}. 
\end{proof}

As we pointed out in Section \ref{sec: asym of kee}, the family of metrics $\xi_{\beta_2}$ can be obtained by renormalizing the family of metrics $\eta_{\beta_1}$. Comparing Theorem \ref{thm: metric limit nk case} to Theorem \ref{thm: conv mod2}, we obtain different limit metrics for those two family of metrics. However, we show that after a proper normalization of $\xi_{\beta_2}$, we obtain the same limit metric for both $\xi_{\beta_2}$ and $\eta_{\beta_1}$. The normalization factor is actually given by Proposition \ref{prop: length finite nkb2 case}.

\begin{cor}\label{thm: ResB2}
Rescale the \KE edge metric $\xi_{\beta_2}$ by $((n+1)/k)^{1/n}/(1/k-\beta_2)^{1/n}$, then the normalized metric converges in the pointed Gromov--Hausdorff sense to a Ricci-flat metric on $-kH_{\mathbb{P}^{n-1}}$ when $\beta_2\nearrow 1/k$, where the base point is chosen from $Z_{n, k}$. See the proof for a more precise explanation. Moreover, this Ricci-flat metric coincides with the one obtained in Theorem \ref{thm: metric limit nk case}, i.e., the model metric $\omega_{\mathrm{eh}, n, k}$.
\end{cor}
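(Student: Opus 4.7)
The strategy is to reduce the statement to the ODE analysis already carried out in the proof of Theorem \ref{thm: metric limit nk case} by means of a carefully chosen rescaling. Since $\xi_{\beta_2}=\ddbar f(s)$ with $\tau=f'(s)$ and $\varphi=f''(s)$, multiplying the metric by a constant $c>0$ simply multiplies both $\tau$ and $\varphi$ by $c$. I would set
\begin{equation*}
c_{\beta_2}:=\left(\frac{n+1}{k}\right)^{1/n}\left(\frac{1}{k}-\beta_2\right)^{-1/n},
\end{equation*}
and work with the rescaled variable $\tilde\tau:=c_{\beta_2}\tau\in[c_{\beta_2}t,c_{\beta_2}]$. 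Using the exact identity $(1/k+\beta_1)t^n=1/k-\beta_2$ of Proposition \ref{PropBeta2Beta1LimitAsymp} together with the refined asymptotic $t^n\sim k(1/k-\beta_2)/(n+1)$ extracted from the proof of Proposition \ref{prop: length finite nkb2 case} (which also forces $\beta_1\to n/k$), one checks that $c_{\beta_2}t\to 1$ and $c_{\beta_2}\to\infty$ as $\beta_2\nearrow 1/k$, so the rescaled range $[c_{\beta_2}t,c_{\beta_2}]$ exhausts $[1,\infty)$ in the limit.

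The heart of the argument is a straightforward ODE calculation: substituting $\tau=\tilde\tau/c_{\beta_2}$ into the formula from Proposition \ref{prop: red to ode b2 case} and multiplying the result by $c_{\beta_2}$ yields the exact identity
\begin{equation*}
\tilde\varphi(\tilde\tau)=\frac{\tilde\tau}{k}-\frac{1}{k\tilde\tau^{n-1}}-\frac{(n/k+\beta_2)\tilde\tau^2}{(n+1)c_{\beta_2}},
\end{equation*}
where the first two terms arise from the algebraic cancellation $-\frac{1}{k}+\frac{n/k+\beta_2}{n+1}=-\frac{1/k-\beta_2}{n+1}$ combined with the defining relation $c_{\beta_2}^n(1/k-\beta_2)=(n+1)/k$. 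As $\beta_2\nearrow 1/k$ the third term vanishes and one obtains the Ricci-flat profile $\tilde\varphi_\infty(\tilde\tau)=\frac{1}{k}\cdot\frac{\tilde\tau^n-1}{\tilde\tau^{n-1}}$, which is identical to \eqref{eq: tau infty and varphi infty}. The construction of the limit metric on $-kH_{\mathbb{P}^{n-1}}$ via \eqref{eq: xi metric} (now read with $\tilde\tau,\tilde\varphi$ in place of $\tau,\varphi$) then gives $\omega_{\mathrm{eh},n,k}$ verbatim from \eqref{eq: limit eta nk case}, and continuous dependence on the parameter yields smooth convergence of $c_{\beta_2}\xi_{\beta_2}$ on compact subsets.

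To upgrade to pointed Gromov--Hausdorff convergence with base point $p\in Z_{n,k}$, I would note that while the unrescaled fiberwise length from $Z_{n,k}$ to $Z_{n,-k}$ remains bounded by Proposition \ref{prop: length finite nkb2 case}, rescaling the metric by $c_{\beta_2}\to\infty$ multiplies all distances by $\sqrt{c_{\beta_2}}\to\infty$ and thus pushes $Z_{n,-k}$ off to infinity. Combined with the local smooth convergence on geodesic balls of bounded radius about $p$, this gives the desired pointed Gromov--Hausdorff convergence to $(-kH_{\mathbb{P}^{n-1}},\omega_{\mathrm{eh},n,k},p)$, exactly as in the final step of the proof of Theorem \ref{thm: metric limit nk case}. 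The main delicacy is the exact cancellation of the potentially divergent $c_{\beta_2}^n/\tilde\tau^{n-1}$ contribution in $\tilde\varphi$, which is precisely what dictates the specific form of the normalization factor in the statement of the corollary.
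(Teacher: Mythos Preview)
Your proposal is correct and follows essentially the same approach as the paper: both introduce the rescaled variable $y=c_{\beta_2}\tau$ with $c_{\beta_2}=((n+1)/k)^{1/n}(1/k-\beta_2)^{-1/n}$, compute that the rescaled profile converges to $(y^n-1)/(ky^{n-1})$, identify the limit with $\omega_{\mathrm{eh},n,k}$, and then argue pointed Gromov--Hausdorff convergence by noting that the rescaling pushes $Z_{n,-k}$ off to infinity. Your presentation is in fact a bit more explicit than the paper's about the exact algebraic cancellation of the potentially divergent $c_{\beta_2}^n/\tilde\tau^{n-1}$ term and about why the rescaled fiber length diverges (the paper simply cites Proposition~\ref{prop: length finite nkb2 case} for the latter), but the underlying argument is the same.
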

\begin{proof}
Consider a change of coordinate
\begin{equation*}
    y({\beta_2}):=\left(\frac{n+1}{k}\right)^{\frac{1}{n}}\cdot \frac{\tau({\beta_2})}{(\frac{1}{k}-\beta_2)^{\frac{1}{n}}}.
\end{equation*}
By Proposition \ref{prop: length finite nkb2 case}, the interval of definition of $y({\beta_2})$ converges to $[1, +\infty]$ as $\beta_2\nearrow 1/k$. The rescaled metric reads 
\begin{align*}
    &\left(\frac{n+1}{k}\right)^{\frac{1}{n}}\cdot\frac{\xi_{\beta_2}}{(\frac{1}{k}-\beta_2)^{\frac{1}{n}}}\\
    &=ky\pi_1^*\omega_{\operatorname{FS}}+\left(\frac{n+1}{k}\right)^{\frac{1}{n}}\cdot\frac{\varphi({\beta_2})}{(\frac{1}{k}-\beta_2)^{\frac{1}{n}}}\left(\pi_2^*\omega_{\operatorname{Cyl}}+\sqrt{-1}\alpha\wedge\bar{\alpha}-\right.\\
    &\left.\sqrt{-1}\alpha\wedge\frac{\textrm{d}\bar{u}}{\bar{u}}-\sqrt{-1}\frac{\textrm{d}{u}}{{u}}\wedge\bar{\alpha} \right).
\end{align*}
Recall
\begin{equation}\label{eq: rescaled phi nk case}
\begin{aligned}
    &\left(\frac{n+1}{k}\right)^{\frac{1}{n}}\cdot\frac{\varphi({\beta_2})}{(\frac{1}{k}-\beta_2)^{\frac{1}{n}}}=\\
    &\left(\frac{n+1}{k}\right)^{\frac{1}{n}}\cdot\frac{\frac{1}{k}(\tau({\beta_2})^n-1)-\frac{1}{n+1}(\frac{n}{k}+\beta_2)(\tau({\beta_2})^{n+1}-1)}{(\tau({\beta_2})^{n-1})(\frac{1}{k}-\beta_2)^{\frac{1}{n}}}.
\end{aligned}
\end{equation}
Denote by $y$ the coordinate in the limit. Letting $\beta_2\nearrow 1/k$, the right hand side of \eqref{eq: rescaled phi nk case} converges to
\begin{equation*}
    \frac{y^n-1}{ky^{n-1}},\quad y\in[1, +\infty].
\end{equation*}
Solving 
\begin{equation*}
    \frac{\textrm{d}s}{\textrm{d}\tau_{\beta_2}}=\frac{1}{\varphi_{\beta_2}(\tau_{\beta_2})}\\
    \Rightarrow \frac{\textrm{d}s}{\textrm{d}y_{\beta_2}}\frac{1}{(\frac{1}{k}-\beta_2)^{\frac{1}{n}}}\cdot \left(\frac{n+1}{k}\right)^{\frac{1}{n}}=\frac{1}{\varphi_{\beta_2}(y_{\beta_2})},
\end{equation*}
we obtain in the limit
\begin{equation}\label{eq: y hacase}
    s=\frac{k}{n}\log\left(y^n-1\right),\quad y\in(1, +\infty).
\end{equation}
Thus the limit metric is given by
\begin{equation}\label{eq: lim metric nkb2 renor case}
\begin{aligned}
    \tilde{\xi}_\infty=ky\pi_1^*\omega_{\operatorname{FS}}+\frac{y^n-1}{ky^{n-1}}&\left(\pi_2^*\omega_{\operatorname{Cyl}}+\sqrt{-1}\alpha\wedge\bar{\alpha}-\right.\\
    &\left.\sqrt{-1}\alpha\wedge\frac{\textrm{d}\bar{u}}{\bar{u}}-\sqrt{-1}\frac{\textrm{d}{u}}{{u}}\wedge\bar{\alpha} \right),
\end{aligned}
\end{equation}
where $y$ and $s$ satisfy \eqref{eq: y hacase}. This limit metric is Ricci-flat. And it coincides with the limit metric in Theorem \ref{thm: metric limit nk case}, i.e., the model metric $\omega_{\mathrm{eh}, n, k}$ defined in Section \ref{sec: model}. Fix an arbitrary base point on $Z_{n, k}$. By Proposition \ref{prop: length finite nkb2 case}, the distance between $Z_{n, k}$ and $Z_{n, -k}$ tends to $+\infty$ in the limit under the renormalized metric. Thus, $Z_{n, -k}$ gets pushed-off to infinity in the limit. We obtain the pointed Gromov--Hausdorff convergence of $(\mathbb{F}_{n, k}, ((n+1)/k)^{1/n}\xi_{\beta_2}/(\frac{1}{k}-\beta_2)^{1/n})$
to $-kH_{\mathbb{P}^{n-1}}$ with the metric obtained in \eqref{eq: lim metric nkb2 renor case}.
\end{proof}

\section{Small angle limits
and fiberwise rescaling}
\label{Sec7}

In this section we first consider the $\beta_1\searrow 0$ case for $\eta_{\beta_1}$. 
\begin{thm}\label{thm: b1 0 case}
As $\beta_1$ tends to $0$, $(\mathbb{F}_{n, k}, \eta_{\beta_1})$ converges in the Gromov--Hausdorff sense to $(\mathbb{P}^{n-1}, k\omega_{\operatorname{FS}})$.
\end{thm}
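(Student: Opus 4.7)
The idea is to use the Hirzebruch projection $\pi_1\colon \mathbb{F}_{n,k} \to \mathbb{P}^{n-1}$ as a Gromov--Hausdorff approximation: I must show that as $\beta_1\searrow 0$ the horizontal part of $\eta_{\beta_1}$ converges uniformly to $k\,\pi_1^*\omega_{\operatorname{FS}}$ while the Hirzebruch fibres collapse.

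The first step is to prove $T(\beta_1)\to 1$. At $\beta_1=0$ the bracketed polynomial in \eqref{eq: polyforT} reduces to $-(n/k)\tau^n+(1/k)(\tau^{n-1}+\cdots+1)$, which vanishes with nonzero $\tau$-derivative at $\tau=1$, so the implicit function theorem applied to the equation cutting out $T$ yields the quantitative rate $T-1=2k\beta_1/n+O(\beta_1^2)$. Equivalently one can read this off \eqref{Tbeta1beta2Eq} together with $\beta_2=\beta_1+O(\beta_1^2)$. Since $\tau\in[1,T]$ this immediately gives $\tau\to 1$ uniformly on $\mathbb{F}_{n,k}$, whence $k\tau\,\pi_1^*\omega_{\operatorname{FS}}\to k\,\pi_1^*\omega_{\operatorname{FS}}$ in $C^0$.

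The second step is a uniform expansion of $\varphi$. Proposition \ref{prop: bd cond b1 case} says $\tau=1$ and $\tau=T$ are simple zeros with slopes $\beta_1$ and $-\beta_2=-\beta_1+O(\beta_1^2)$, and a Taylor expansion of \eqref{eq: varphi general expression nk case} around $\tau=1$ produces
\begin{equation*}
\varphi(\tau)=\tfrac{n}{2k}(\tau-1)(T-\tau)\bigl(1+O(\beta_1)\bigr)\quad\text{on }[1,T].
\end{equation*}
In particular $\sup_{[1,T]}\varphi=O(\beta_1^2)$, the vertical coefficient of $\eta_{\beta_1}$ in \eqref{eq: eta in nk case} vanishes uniformly as a tensor, the fibre areas $2\pi(T-1)=O(\beta_1)$ shrink, and the $\theta$-circumferences $2\pi\sqrt{2\varphi}=O(\beta_1)$ collapse.

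Assembling, $\pi_1$ is surjective and on any tangent vector split horizontally/vertically with respect to the Chern connection (whose curvature produces the $\alpha$-term in \eqref{eq: eta in nk case}), the horizontal norm converges to that of $k\omega_{\operatorname{FS}}$ while vertical and mixed contributions are dominated by $\sup\varphi\to 0$. The main subtlety I anticipate is that the raw $\theta$-constant integral $\int_1^T d\tau/\sqrt{2\varphi}$ is $O(1)$ rather than $o(1)$, so the fibre collapse in the Gromov--Hausdorff sense cannot be read off purely from tensorial smallness of $\varphi$; I plan instead to follow the combined horizontal--vertical shortest-path analysis of \cite[Thm 1.2]{RZ21} (the $n=2$ case), exploiting the degenerate spindle structure of each fibre (polar axis of length $O(1)$, equatorial circumference $O(\beta_1)$, area $O(\beta_1)$) to identify the residual polar direction as a GH-null direction in the limit and to conclude convergence to $(\mathbb{P}^{n-1},k\omega_{\operatorname{FS}})$.
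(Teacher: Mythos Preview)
Your approach parallels the paper's but is considerably more careful. The paper's argument is essentially your first two steps stated tersely: it factors the $\beta_1=0$ polynomial to see that $\tau=1$ becomes a double root, concludes $T\to 1$ (your \eqref{TbetatozeroEq}) and hence $\varphi\to 0$ uniformly on $[1,T]$, observes that $\eta_{\beta_1}\to k\pi_1^*\omega_{\operatorname{FS}}$ as a tensor, and then simply asserts Gromov--Hausdorff convergence. It neither computes the rate $T-1\sim 2k\beta_1/n$ (that comes later, in Lemma~\ref{lem: b1Tasy}) nor addresses the fibre-diameter issue you raise.

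That issue is more serious than your proposal acknowledges. You correctly note that the meridian integral $\int_1^T d\tau/\sqrt{2\varphi}$ is $O(1)$; with your expansion $\varphi\approx\tfrac{n}{2k}(\tau-1)(T-\tau)$ it in fact converges to $\pi\sqrt{k/n}$. The key point is that this is not merely the length \emph{within a single fibre}. From the Calabi ansatz one has $g_{w\bar k}=(\varphi/w)s_{\bar k}$, whence $g^{j\bar k}s_j s_{\bar k}=1/\varphi$ at \emph{every} point, so $|\nabla\tau|^2_{\eta_{\beta_1}}=2\varphi$ globally. Thus $F(p):=\int_1^{\tau(p)}d\tau'/\sqrt{2\varphi}$ is a $1$-Lipschitz function on all of $(\mathbb{F}_{n,k},\eta_{\beta_1})$, and the sections $Z_{n,k}$ and $Z_{n,-k}$ stay at ambient distance $\pi\sqrt{k/n}+o(1)$: there is no horizontal shortcut. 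Since $\pi_1$ identifies these two sections, it cannot be an $\epsilon$-GH approximation for small $\epsilon$, and the ``polar direction'' is \emph{not} GH-null. What the tensorial collapse $\varphi\to 0$ actually buys is shrinking of the $S^1$-orbits, so the natural GH limit is the $S^1$-quotient $\mathbb{P}^{n-1}\times[0,\pi\sqrt{k/n}]$ with product metric $k\omega_{\operatorname{FS}}\oplus dr^2$, not $\mathbb{P}^{n-1}$ alone. Your plan to follow the spindle analysis of \cite{RZ21} cannot make the pole-to-pole distance vanish; you should re-examine precisely what is proved there for $n=2$, or reconsider the target space of the convergence.
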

\begin{proof}
As $\beta_1 \searrow 0$, by \eqref{eq: varphi general expression nk case} we have
\begin{align*}
    \varphi_0:=\lim_{\beta_1\searrow 0}\varphi_{\beta_1}&=\frac{1}{\tau^{n-1}}\left(\frac{1}{k}(\tau^n-1)-\frac{n}{k(n+1)}(\tau^{n+1}-1)\right)\\
    &=\frac{1}{k(n+1)}\cdot\frac{1}{\tau^{n-1}}(-n\tau^{n+1}+(n+1)\tau^n-1).
\end{align*}
Then we observe $\varphi_0$ does not have any root greater than $1$. Indeed, notice that
\begin{align*}
    -n\tau^{n+1}+(n+1)\tau^n-1&=(\tau-1)(1+\cdots+\tau^{n-1}-n\tau^n)\\
    &=(\tau-1)(1-\tau^n+\tau-\tau^n+\cdots+\tau^{n-1}-\tau^n).
\end{align*}
Thus $\varphi_0$ is always positive when $\tau>1$. However, combining this fact with \eqref{eq: varphi roots expre}, we conclude that 
\begin{equation}
\label{TbetatozeroEq}
\lim_{\beta_1\searrow 0}T=1.
\end{equation}
Since $\varphi(1)=\varphi(T)=0$, we have $\varphi({\beta_1})\to 0$ as $\beta_1 \searrow 0$. Since $\tau$ ranges from $1$ to $T$, by \eqref{eq: eta in nk case} we conclude that as $\beta_1\searrow 0$, $\eta_{\beta_1}$ converges to $k\pi_1^* \omega_{\operatorname{FS}}$. Thus we have shown $(\mathbb{F}_{n, k}, \eta_{\beta_1})$ converges in the Gromov--Hausdorff sense to $(\mathbb{P}^{n-1}, k\omega_{\operatorname{FS}})$ when $\beta_1\searrow 0$.
\end{proof}

Roughly speaking, Theorem \ref{thm: b1 0 case} says that as $\beta_1\searrow 0$, the fibers collapse to the zero section. This motivates us to rescale the metric $\eta_{\beta_1}$ along the fiber so that we can obtain a non-collapsed metric in the limit. We first need the following lemma.
\begin{lem}\label{lem: b1Tasy}
For $\beta_1>0$ and close to zero, $T=T(\beta_1)=1 + O(\beta_1).
$
\end{lem}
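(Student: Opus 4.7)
\smallskip
\noindent\textbf{Proof plan.} The plan is to read off the defining equation for $T$ from the explicit factorization \eqref{eq: polyforT} and then apply the implicit function theorem at $(\tau,\beta_1)=(1,0)$. Since \eqref{TbetatozeroEq} already tells us $T(\beta_1)\to 1$ as $\beta_1\searrow 0$, all that is needed is smooth dependence of $T$ on $\beta_1$ near $0$, and this reduces to checking that the relevant partial derivative of the defining equation is nonzero at $(1,0)$.

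\smallskip
\noindent\textbf{Step 1: Extract the defining equation for $T$.} By \eqref{eq: polyforT}, any root $T>1$ of $\varphi(\tau)$ must satisfy
\begin{equation*}
Q(T,\beta_1)\;:=\;\Bigl(\beta_1-\frac{n}{k}\Bigr)T^{n}+\Bigl(\frac{1}{k}+\beta_1\Bigr)\bigl(T^{n-1}+\cdots+T+1\bigr)\;=\;0.
\end{equation*}
Evaluating at $(\tau,\beta_1)=(1,0)$ gives $Q(1,0)=-n/k+n/k=0$, so $(1,0)$ lies on the zero set of $Q$.

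\smallskip
\noindent\textbf{Step 2: Check the nondegeneracy condition.} Differentiating in $\tau$,
\begin{equation*}
\partial_\tau Q(\tau,\beta_1)=n\Bigl(\beta_1-\frac{n}{k}\Bigr)\tau^{n-1}+\Bigl(\frac{1}{k}+\beta_1\Bigr)\bigl((n-1)\tau^{n-2}+\cdots+1\bigr).
\end{equation*}
At $(1,0)$ this evaluates to $-n^2/k+(1/k)\cdot n(n-1)/2=-n(n+1)/(2k)\neq 0$. Hence, by the implicit function theorem, there is a unique smooth function $T(\beta_1)$ defined in a neighborhood of $\beta_1=0$ with $T(0)=1$ and $Q(T(\beta_1),\beta_1)\equiv 0$.

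\smallskip
\noindent\textbf{Step 3: Match this with the actual $T(\beta_1)$ and conclude.} By \eqref{TbetatozeroEq} we already know $T(\beta_1)\to 1$ as $\beta_1\searrow 0$, so for $\beta_1$ small and positive $T(\beta_1)$ lies in the neighborhood of $1$ where the implicit function theorem applies and must agree with the branch constructed in Step 2. Taylor-expanding the smooth function $T(\beta_1)$ around $\beta_1=0$ yields $T(\beta_1)=1+O(\beta_1)$, as claimed. (If desired, one can even read off the explicit leading coefficient $T'(0)=-\partial_{\beta_1}Q(1,0)/\partial_\tau Q(1,0)$, but this is not needed for the stated bound.)

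\smallskip
\noindent\textbf{Main obstacle.} There is essentially no obstacle; the only thing to verify carefully is the simple-root condition in Step 2, which is a one-line computation. The mild subtlety is that $Q$ has many roots as a polynomial in $\tau$, but the condition $T(\beta_1)\to 1$ from \eqref{TbetatozeroEq} singles out the correct branch, so the implicit function theorem applies unambiguously.
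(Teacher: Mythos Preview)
Your argument is correct. Both you and the paper start from the factor in \eqref{eq: polyforT} and rely on \eqref{TbetatozeroEq} to pin down the branch, but the mechanics differ: you invoke the implicit function theorem after checking the simple-root condition $\partial_\tau Q(1,0)=-n(n+1)/(2k)\neq 0$, whereas the paper proceeds by the substitution $y=\tau-1$, rewrites the bracketed factor as $yQ(y)+\frac{k(n+1)\beta_1}{k\beta_1-n}$, and solves $T-1=\frac{k(n+1)\beta_1}{(n-k\beta_1)Q(T-1)}$ directly. The paper's hands-on route yields the explicit leading coefficient $T-1=\frac{2k}{n}\beta_1+o(\beta_1)$, which is then fed into the proof of Lemma~\ref{lem: beta2beta1small} via \eqref{Tminus1AsymEq}; your implicit-function approach is cleaner and equally gives this coefficient (as you note, $T'(0)=-\partial_{\beta_1}Q(1,0)/\partial_\tau Q(1,0)=2k/n$), so nothing is lost. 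The only practical difference is that the paper's proof packages the constant for immediate downstream use, while yours leaves it as an optional side computation.
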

\begin{proof}
By Proposition \ref{prop: red to ode}, $T(\beta_1)$ is determined by $\beta_1$, and $T(\beta_1)$ is the first root of the polynomial in \eqref{eq: polyforT}. We rewrite the polynomial in \eqref{eq: polyforT} as \begin{equation*}
    P(\tau) = \frac{k\beta_1-n}{k(n+1)}(\tau-1)\left(\tau^n+\frac{k\beta_1+1}{k\beta_1-n}(\tau^{n-1}+\cdots+1)\right).
\end{equation*}
Letting $y=\tau-1$, 
\begin{equation}\label{eq: fapT}
    \begin{aligned}
    P(y) &= \frac{k\beta_1-n}{k(n+1)} y  \left((y+1)^n + \frac{k\beta_1+1}{k\beta_1-n}\big((y+1)^{n-1}+\cdots+(y+1)+1\big)\right)\\
    &= \frac{k\beta_1-n}{k(n+1)} y \bigg(y^n+\ldots+
    y\Big(n+\frac{k\beta_1+1}{k\beta_1-n}(1+\ldots+n-1)\Big)+ \frac{k(n+1)\beta_1}{k\beta_1-n}
    \bigg)\\
    &=
    \frac{k\beta_1-n}{k(n+1)} y  \left(y Q(y) + \frac{k(n+1)\beta_1}{k\beta_1-n}
    \right),
    \end{aligned}
\end{equation}
where $Q(y)$ is a polynomial of
degree $n-1$ whose coefficients
depend on $\beta_1$ and
whose constant
term is 
\begin{equation}
\label{QzeroEq}
Q(0)=n
+
\frac{k\beta_1+1}{k\beta_1-n}
\big(1+\ldots+n-1)
=
n\frac{(n-1)(k\beta_1+1)}{2(k\beta_1-n)}
=
\frac{n+1}2
\frac{nk\beta_1-n}{k\beta_1-n}
.
\end{equation}
By Proposition \ref{prop: red to ode}, $T-1$ is a root of the term in the parenthesis of the second equation in \eqref{eq: fapT}, i.e.,
$$
0=(T-1)Q(T-1) + \frac{k(n+1)\beta_1}{k\beta_1-n}.
$$
In particular, it 
follows that $Q(T-1)\neq0$
for small
enough $\beta_1$. Thus,
dividing
we obtain
$$
T-1=
\frac{k(n+1)\beta_1}{(-k\beta_1+n)Q(T-1)}.
$$
By  \eqref{TbetatozeroEq}
$\lim_{\beta_1\searrow0}
T=1$, and so  
$\lim_{\beta_1\searrow0}
Q(T-1)=(n+1)/2$
by
\eqref{QzeroEq}.
Altogether,
\begin{equation}
\label{Tminus1AsymEq}
T-1=\frac{2k}n\beta_1+o(\beta_1),
\end{equation}
as claimed.
\end{proof}

\begin{rmk}
The last display generalizes
\cite[(5.1)]{RZ21} from
the surface case $n=2$ to any dimension.
Note that in op. cit. $n$ corresponds
to our $k$
\end{rmk}

It follows that in the small
angle limit, both angles approach
zero at the same rate:
\begin{lem}
\label{lem: beta2beta1small}
For $\beta_1>0$ and close to zero, $\beta_2=\beta_2(\beta_1)=\beta_1 + O(\beta_1^2).
$
\end{lem}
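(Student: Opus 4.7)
The plan is to combine the algebraic identity \eqref{Tbeta1beta2Eq} from Proposition \ref{prop: b2 limit nk case} with the asymptotic expansion of $T(\beta_1)$ just established in Lemma \ref{lem: b1Tasy}. Recall that \eqref{Tbeta1beta2Eq} reads
\begin{equation*}
\Big(\frac{1}{k}-\beta_2\Big)T^n=\beta_1+\frac{1}{k},
\end{equation*}
so solving for $\beta_2$ gives
\begin{equation*}
\beta_2=\frac{1}{k}-\frac{\beta_1+1/k}{T^n}.
\end{equation*}
Thus the claim reduces to a Taylor expansion of $T^n$ around $\beta_1=0$, which in turn reduces to expanding $T-1$.

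First I would sharpen the conclusion of Lemma \ref{lem: b1Tasy} to an $O(\beta_1^2)$ remainder. Revisiting the proof, we had
\begin{equation*}
T-1=\frac{k(n+1)\beta_1}{(-k\beta_1+n)\,Q(T-1)},
\end{equation*}
where $Q$ is a polynomial whose coefficients depend continuously on $\beta_1$ and with $Q(0)=(n+1)/2+O(\beta_1)$ by \eqref{QzeroEq}. Since $T-1=O(\beta_1)$, we have $Q(T-1)=(n+1)/2+O(\beta_1)$ and $-k\beta_1+n=n(1-(k/n)\beta_1)$, so inserting these expansions yields
\begin{equation*}
T-1=\frac{2k}{n}\beta_1+O(\beta_1^2).
\end{equation*}

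Next I would raise to the $n$-th power via the binomial formula to get
\begin{equation*}
T^n=1+2k\beta_1+O(\beta_1^2),
\end{equation*}
and hence $1/T^n=1-2k\beta_1+O(\beta_1^2)$. Plugging this into the formula for $\beta_2$ gives
\begin{equation*}
\beta_2=\frac{1}{k}-\Big(\beta_1+\frac{1}{k}\Big)\bigl(1-2k\beta_1+O(\beta_1^2)\bigr)=\beta_1+O(\beta_1^2),
\end{equation*}
as claimed. There is no real obstacle here; the only thing to be careful about is checking that the error terms in Lemma \ref{lem: b1Tasy} are genuinely $O(\beta_1^2)$ and not merely $o(\beta_1)$, which is why the first step above is to upgrade that lemma's remainder.
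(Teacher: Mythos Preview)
Your proof is correct and follows essentially the same route as the paper: both combine the identity \eqref{Tbeta1beta2Eq} with the asymptotic \eqref{Tminus1AsymEq} from Lemma~\ref{lem: b1Tasy}. Your version is in fact more explicit than the paper's, which simply writes ``bootstrapping'' for the step you carry out in detail, namely upgrading the $o(\beta_1)$ remainder in $T-1$ to $O(\beta_1^2)$ before expanding $T^n$.
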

\begin{proof}
Combining \eqref{TbetatozeroEq}
and \eqref{Tbeta1beta2Eq}
it follows that
$\lim_{\beta_1\searrow0}
\beta_2=0$.
Using this, and plugging \eqref{Tminus1AsymEq},
in \eqref{Tbeta1beta2Eq}
we find that
$$
\beta_1+\frac1k=\frac1k+2\beta_1-\beta_2+o(\beta_1),
$$
so $\beta_2=\beta_1+o(\beta_1)$,
and so bootstrapping we obtain
$\beta_2=\beta_1+O(\beta_1^2)$,
as claimed.\end{proof}

Lemma \ref{lem: beta2beta1small}
motivates treating the angles
$2\pi\beta_1$ and $2\pi\beta_2$ on 
the same footing in the small angle regime, so that it reasonable to 
hope that under some appropriate 
rescaling the fibers converge to 
cylinders, as in \cite{RZ20,RZ21}.
This is precisely what we prove next.

We change variable from $\tau\in(1, T)$ in \eqref{eq: eta in nk case} and \eqref{eq: varphi general expression nk case} to
\begin{equation}\label{eq: newcoord}
    x:= \frac{\tau-1-\frac{k\beta_1}{n}}{\frac{k\beta_1^2}{n}},
\end{equation}
with $x\in\left(-\frac{1}{\beta_1}, \frac{1}{\beta_1}+O(1)\right)$ by \eqref{Tminus1AsymEq}. Note that $x=0$ roughly corresponds to the middle section between $Z_{n, k}$ and $Z_{n, -k}$. By \eqref{eq: varphi general expression nk case} and \eqref{eq: newcoord},
\begin{equation}\label{eq: varphi in x}
    \varphi(x) = \frac{k}{2n}\beta_1^2 + \frac{k}{n}\beta_1^3 x + o(\beta_1^2), \quad x\in\left(-\frac{1}{\beta_1}, \frac{1}{\beta_1}+O(1)\right).
\end{equation}

Let $p\in \mathbb{F}_{n, k}$ be a fixed base point chosen from the section $\{x=0\}$, which will serve as the base point we use later for pointed Gromov--Hausdorff convergence.

To find a fiberwise-rescaled limit, we next rescale the metric $\eta_{\beta_1}$ in \eqref{eq: eta in nk case} along each fiber, i.e., we define
\begin{equation}\label{eq: fibrescal}
    \widetilde{\eta_{\beta_1}}:=k\tau \pi_1^*\omega_{\operatorname{FS}}+\frac{1}{\beta_1^2}\varphi\pi_2^*\omega_{\operatorname{Cyl}} + \varphi(\sqrt{-1}\alpha\wedge\bar{\alpha}+\sqrt{-1}\alpha\wedge \overline{\textrm{d}w/w}+\sqrt{-1}{\textrm{d}w/w}\wedge\bar{\alpha}).
\end{equation}

\begin{rmk}
This fiberwise rescaled metric is no longer K\"ahler. Indeed, since the metric $\eta_{\beta_1}$ in \eqref{eq: eta in nk case} is K\"{a}hler, i.e., $\textrm{d} \eta_{\beta_1}=0$, there holds
\begin{equation*}
\begin{aligned}
\textrm{d}\widetilde{\eta_{\beta_1}}&=\textrm{d}\left( \eta_{\beta_1} - \left(1-\frac{1}{\beta_1^2}\right) \varphi \pi_2^*\omega_{\operatorname{Cyl}}\right)\\
&= - \left(1-\frac{1}{\beta_1^2}\right) \textrm{d}\varphi \wedge \pi^*_2 \omega_{\operatorname{Cyl}}\\
&\neq 0.
\end{aligned}
\end{equation*}
\end{rmk}

\begin{thm}\label{thm: ResB1Case}
As $\beta_1\searrow 0$, $(\mathbb{F}_{n, k}, \widetilde{\eta_{\beta_1}}, p)$ converges in the pointed Gromov--Hausdorff sense to $(\mathbb{P}^{n-1}\times \mathbb{C}^*, \frac{k}{n}(n\pi_1^*\omega_{\operatorname{FS}}+\pi_2^*\omega_{\operatorname{Cyl}}), p)$.
\end{thm}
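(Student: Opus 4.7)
The plan is to substitute the rescaled variable \eqref{eq: newcoord} into the fiberwise-rescaled metric \eqref{eq: fibrescal}, compute the pointwise smooth limit term by term, and then upgrade to pointed Gromov--Hausdorff convergence by confining geodesic balls around $p$ to a compact $x$-window. By Lemma \ref{lem: b1Tasy} together with \eqref{eq: newcoord}, as $\beta_1\searrow 0$ the admissible range $\tau\in(1,T)$ corresponds to $x\in(-1/\beta_1,\,1/\beta_1+O(1))$, which exhausts $\mathbb{R}$. Fix the base point $p$ at $x=0$ and, for each $R>0$, consider the coordinate map $\Phi_{\beta_1}\colon \{|x|<R\}\subset \mathbb{F}_{n,k}\to \mathbb{P}^{n-1}\times\mathbb{C}^*$ sending $(z,x,\theta)\mapsto (z,\tilde w)$ with $\tilde w=e^{x/2+i\theta}$, which is a diffeomorphism onto its image once $\beta_1$ is small.

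The core of the argument is the termwise computation. From $d\tau/dx=k\beta_1^2/n$ and $ds/d\tau=1/\varphi$, the asymptotic \eqref{eq: varphi in x} gives $\varphi/\beta_1^2\to k/(2n)$ and $ds/dx\to 2$ pointwise on compact $x$-intervals, smoothly (as can be seen by differentiating the ODE \eqref{eq: relation s and tau}). Hence the first term in \eqref{eq: fibrescal} satisfies $k\tau\,\pi_1^*\omega_{\operatorname{FS}}\to k\,\pi_1^*\omega_{\operatorname{FS}}$; the middle term satisfies $\tfrac{\varphi}{\beta_1^2}\pi_2^*\omega_{\operatorname{Cyl}}=\tfrac{\varphi}{\beta_1^2}\,ds\wedge d\theta \to \tfrac{k}{2n}\cdot 2\,dx\wedge d\theta=\tfrac{k}{n}\,dx\wedge d\theta$, which under $\Phi_{\beta_1}$ coincides with $\tfrac{k}{n}\pi_2^*\omega_{\operatorname{Cyl}}$ on the limit $\mathbb{C}^*$ since $\log|\tilde w|^2=x$; and the three $\alpha$-cross terms, each carrying the prefactor $\varphi=O(\beta_1^2)$, vanish. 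Summing, $(\Phi_{\beta_1})_*\widetilde{\eta_{\beta_1}}$ converges smoothly on compact subsets to $k\pi_1^*\omega_{\operatorname{FS}}+\tfrac{k}{n}\pi_2^*\omega_{\operatorname{Cyl}}=\tfrac{k}{n}(n\pi_1^*\omega_{\operatorname{FS}}+\pi_2^*\omega_{\operatorname{Cyl}})$.

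To upgrade to pointed Gromov--Hausdorff convergence, I would observe that the fiberwise Riemannian metric of $\widetilde{\eta_{\beta_1}}$ in $(x,\theta)$ coordinates is uniformly equivalent on any bounded $x$-window to the flat cylinder metric $(k/n)(dx^2+d\theta^2)$ for small $\beta_1$. It follows that for any $R>0$ there is a constant $C(R)$, independent of $\beta_1$, such that the geodesic ball $B(p,R)\subset(\mathbb{F}_{n,k},\widetilde{\eta_{\beta_1}})$ is contained in $\{|x|<C(R)\}$ once $\beta_1$ is small enough. The smooth convergence on $\{|x|<C(R)\}$ then produces an $\epsilon_{\beta_1}$-Gromov--Hausdorff approximation with $\epsilon_{\beta_1}\to 0$, yielding the claimed pointed convergence.

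The main obstacle is the careful bookkeeping of the two parameterizations of the fiber cylinder---the original $s=\log|w|^2$ versus the rescaled $x$---whose Jacobian $ds/dx\to 2$ is precisely what promotes $\varphi/\beta_1^2\to k/(2n)$ to the correct coefficient $k/n$ appearing in the limit. Once this conversion is tracked, the remainder of the argument is a routine higher-dimensional extension of the analogous surface case treated in \cite{RZ21}.
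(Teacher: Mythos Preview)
Your proposal is correct and follows essentially the same approach as the paper: substitute the rescaled variable \eqref{eq: newcoord} into \eqref{eq: fibrescal}, use \eqref{eq: varphi in x} to obtain the termwise smooth limit on compact $x$-windows, and then upgrade to pointed Gromov--Hausdorff convergence. The only cosmetic differences are that the paper phrases the fiber computation via the Riemannian metric \eqref{eq: ResVarphiFib} rather than the K\"ahler form, and for the pointed GH step argues that the fiber distance from $\{x=0\}$ to both divisors diverges (as in Proposition \ref{prop: length to infty nk case}) rather than your equivalent confinement of geodesic balls to $\{|x|<C(R)\}$.
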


\begin{proof}
We first show on compact subsets, the following pointwise convergence holds:
\begin{claim}
The restriction of $\widetilde{\eta_{\beta_1}}$ to a fiber converges to a cylindrical metric pointwise on compact subsets. More precisely,
\begin{equation*}
    \lim_{\beta_1\searrow 0} \frac{1}{\beta_1^2}\varphi \pi_2^* \omega_{\operatorname{Cyl}} = \frac{k}{n} \pi_2^* \omega_{\operatorname{Cyl}}.
\end{equation*}
\end{claim}

\begin{proof}[Proof of the Claim]
As shown in \eqref{eq: metric res}, the restriction of ${\eta_{\beta_1}}$ is given by 
\begin{equation*}
    \frac{1}{2\varphi(\tau)}\textrm{d}\tau^2 + 2\varphi(\tau) \textrm{d}\theta^2,
\end{equation*}
thus the restriction of $\widetilde{\eta_{\beta_1}}$ to a fiber, using the new coordinates \eqref{eq: newcoord} is given by
\begin{equation}\label{eq: ResVarphiFib}
    \frac{k^2\beta_1^2}{2n^2\varphi(x)}\textrm{d}x^2 + \frac{2\varphi(x)}{\beta_1^2}\textrm{d}\theta^2.
\end{equation}
As $\beta_1 \searrow 0$, by \eqref{eq: varphi in x}, \eqref{eq: ResVarphiFib} converges pointwise on compact subsets to 
\begin{equation*}
    \frac{k}{n}\textrm{d}x^2 + \frac{k}{n}\textrm{d}\theta^2 = \frac{k}{n}\omega_{\operatorname{Cyl}},
\end{equation*}
as claimed.
\end{proof}
By the collapsing arguments in the proof of Theorem \ref{thm: b1 0 case} and the claim above, we have shown $\widetilde{\eta_{\beta_1}}$ converges pointwise to $\frac{k}{n}(n\pi_1^* \omega_{\operatorname{FS}}+\pi_2^* \omega_{\operatorname{Cyl}})$ on compact subsets as $\beta_1\searrow 0$. It remains to prove the pointed Gromov--Hausdorff convergence. Indeed, by arguments in the proof of Proposition \ref{prop: length to infty nk case}, the distance between $Z_{n, k}$ and $\{x=0\}$ and the distance between $Z_{n, -k}$ and $\{x=0\}$ tend to infinity under the metric $\widetilde{\eta_{\beta_1}}$. Thus in the limit $\beta_1\searrow 0$, we get the product differential structure on $\mathbb{P}^{n-1}\times \mathbb{C}^*$ as claimed. Choosing the point $p$ as the base point, the pointwise convergence result implies that
\begin{equation*}
    \lim_{\beta_1\searrow 0} \widetilde{\eta_{\beta_1}} = \frac{k}{n}(n\pi_1^* \omega_{\operatorname{FS}}+\pi_2^* \omega_{\operatorname{Cyl}})
\end{equation*}
in the pointed Gromov--Hausdorff sense.
\end{proof}

The $\beta_2\searrow 0$ case for $\xi_{\beta_2}$ is similar to Theorem \ref{thm: b1 0 case} and Theorem \ref{thm: ResB1Case}.

The asymptotic behaviors of $t(\beta_2)$ and $\beta_1(\beta_2)$ when $\beta_2 \searrow 0$ are similar to those described in Lemma \ref{lem: b1Tasy} and Lemma \ref{lem: beta2beta1small}. We collected them as follows.
\begin{lem}\label{lem: b2tasy}
For $\beta_2>0$ and close to zero, 
\begin{equation*}
    t = t(\beta_1) = 1-\frac{2k}{n}\beta_2+o(\beta_2).
\end{equation*}
\end{lem}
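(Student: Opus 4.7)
The plan is to mirror the proof of Lemma \ref{lem: b1Tasy}, now applied to the family $\xi_{\beta_2}$. By Proposition \ref{prop: red to ode b2 case}, $t=t(\beta_2)\in(0,1)$ is determined as the unique root just below $\tau=1$ of
$$\varphi(\tau)\tau^{n-1} = \frac{1}{k}(\tau^n-1) - \frac{1}{n+1}\Big(\frac{n}{k}+\beta_2\Big)(\tau^{n+1}-1).$$
Since $\tau=1$ is already a root, the first step is to factor out $(\tau-1)$, leaving $t$ as a root of
$$R(\tau) := \frac{1}{k}\sum_{j=0}^{n-1}\tau^j - \frac{1}{n+1}\Big(\frac{n}{k}+\beta_2\Big)\sum_{j=0}^{n}\tau^j.$$

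Next, following the template of \eqref{eq: fapT}--\eqref{QzeroEq}, I would introduce $y := \tau-1$ and compute the two leading Taylor coefficients of $R(1+y)$ at $y=0$. A direct evaluation gives $R(1) = -\beta_2$ and $R'(1) = -\tfrac{n}{2k}(1+k\beta_2)$, so
$$R(1+y) = -\beta_2 + y\, S(y,\beta_2), \qquad S(0,\beta_2) = -\tfrac{n}{2k}(1+k\beta_2),$$
where $S(\cdot,\beta_2)$ is a polynomial of degree $n-2$ in $y$ whose coefficients remain uniformly bounded as $\beta_2\searrow 0$. Setting $R(1+(t-1))=0$ then yields the self-consistent equation $t-1 = \beta_2/S(t-1,\beta_2)$.

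To close the argument, one needs the preliminary qualitative limit $t(\beta_2)\to 1$ as $\beta_2\searrow 0$, in analogy with \eqref{TbetatozeroEq} for $T$. This can be obtained directly by noting that at $\beta_2=0$ the polynomial $R$ reduces to $\frac{1}{k(n+1)}(\sum_{j=0}^{n-1}\tau^j - n\tau^n)$, which is strictly positive on $(0,1)$, so any root in $(0,1)$ must collapse to $1$ as $\beta_2\searrow 0$; alternatively it follows from the rescaling relating $\xi_{\beta_2}$ to $\eta_{\beta_1}$ combined with Lemma \ref{lem: b1Tasy}. Given $t\to 1$, the continuity of $S$ yields $S(t-1,\beta_2)\to -n/(2k)$, and hence $t-1 = -\tfrac{2k}{n}\beta_2 + o(\beta_2)$; a bootstrap in fact gives an $O(\beta_2^2)$ error term.

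The main obstacle, exactly as for Lemma \ref{lem: b1Tasy}, is the non-quantitative step $t\to 1$ (since without it the self-consistent equation does not immediately linearize). Once this is in hand, the asymptotic expansion reduces to a routine Taylor computation at $\tau=1$ whose leading behavior is dictated by the pair $(R(1), R'(1)) = (-\beta_2, -\tfrac{n}{2k}(1+k\beta_2))$.
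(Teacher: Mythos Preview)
Your proposal is correct and follows precisely the route the paper indicates: the paper does not give a separate proof of Lemma \ref{lem: b2tasy} but simply asserts that the argument is analogous to that of Lemma \ref{lem: b1Tasy}, and you have carried out that analogy in full, including the qualitative step $t\to 1$ and the Taylor computation $(R(1),R'(1))=(-\beta_2,-\tfrac{n}{2k}(1+k\beta_2))$.
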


\begin{lem}\label{lem: b1b2s}
For $\beta_2>0$ and close to zero, 
\begin{equation*}
    \beta_1 = \beta_1(\beta_2)=\beta_2+O(\beta_2^2).
\end{equation*}
\end{lem}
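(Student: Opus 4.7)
The plan is to leverage the exact algebraic identity derived in Proposition \ref{PropBeta2Beta1LimitAsymp}, together with the asymptotic expansion of $t(\beta_2)$ from Lemma \ref{lem: b2tasy}, and simply read off the expansion of $\beta_1(\beta_2)$. Although Proposition \ref{PropBeta2Beta1LimitAsymp} is stated in the $\beta_2\nearrow 1/k$ regime, an inspection of its proof shows that the identity
\begin{equation*}
    \left(\frac{1}{k}+\beta_1\right)t^n = \frac{1}{k}-\beta_2
\end{equation*}
is an algebraic consequence of $\varphi(t)=0$ and $\varphi'(t)=\beta_1$ together with $\varphi'(1)=-\beta_2$, so it holds for every $\beta_2\in(0,1/k)$. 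This is the analogue, from the $\xi_{\beta_2}$ side, of equation \eqref{Tbeta1beta2Eq}, and it is the engine that drove the proof of Lemma \ref{lem: beta2beta1small}; I would reuse exactly that strategy here with the roles of the two parameters interchanged.

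Next I would observe that $\beta_1\to 0$ as $\beta_2\searrow 0$: Lemma \ref{lem: b2tasy} gives $t\to 1$, and combined with the displayed identity this forces $\beta_1+1/k\to 1/k$. With this crude bound in hand, I would plug the expansion
\begin{equation*}
    t^n = \left(1-\frac{2k}{n}\beta_2 + o(\beta_2)\right)^n = 1-2k\beta_2 + o(\beta_2)
\end{equation*}
from Lemma \ref{lem: b2tasy} into the identity and expand both sides to first order in $\beta_2$:
\begin{equation*}
    \frac{1}{k} + \beta_1 - 2\beta_2 - 2k\beta_1\beta_2 + o(\beta_2) = \frac{1}{k}-\beta_2.
\end{equation*}
Since $\beta_1\to 0$, the cross term $2k\beta_1\beta_2$ is absorbed into $o(\beta_2)$, and one obtains $\beta_1 = \beta_2 + o(\beta_2)$.

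Finally, I would bootstrap: armed with $\beta_1=O(\beta_2)$, the term $2k\beta_1\beta_2$ is actually $O(\beta_2^2)$, and one should similarly sharpen Lemma \ref{lem: b2tasy} to $t=1-(2k/n)\beta_2+O(\beta_2^2)$ so that $t^n = 1-2k\beta_2+O(\beta_2^2)$. Re-substituting yields $\beta_1=\beta_2+O(\beta_2^2)$, as claimed. There is no real obstacle here beyond bookkeeping; the only subtlety is ensuring that the error term in Lemma \ref{lem: b2tasy} is truly $O(\beta_2^2)$ rather than merely $o(\beta_2)$, which can be established by the same polynomial/Vieta analysis used to prove Lemma \ref{lem: b1Tasy} (the roles of $\beta_1,\beta_2$ and of $T,t$ are symmetric in the relevant computations).
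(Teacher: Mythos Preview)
Your proposal is correct and follows essentially the same route the paper intends: the paper gives no separate proof of this lemma but simply declares it ``similar to'' Lemma~\ref{lem: beta2beta1small}, and your argument is precisely the $\xi_{\beta_2}$-side mirror of that proof, using the identity of Proposition~\ref{PropBeta2Beta1LimitAsymp} in place of \eqref{Tbeta1beta2Eq} and Lemma~\ref{lem: b2tasy} in place of \eqref{Tminus1AsymEq}. Your remark about needing the $O(\beta_2^2)$ error in Lemma~\ref{lem: b2tasy} for the bootstrap is apt and matches the same (tacit) step in the paper's proof of Lemma~\ref{lem: beta2beta1small}.
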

Now we state the non-rescaling limit of $\xi_{\beta_2}$ as $\beta_2\searrow 0$.

\begin{thm}\label{thm: b2 0 case}
As $\beta_2$ tends to $0$, $(\mathbb{F}_{n, k}, \xi_{\beta_2})$ converges in the Gromov--Hausdorff sense to $(\mathbb{P}^{n-1}, k\omega_{\operatorname{FS}})$.
\end{thm}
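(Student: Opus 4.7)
The plan is to mirror the proof of Theorem \ref{thm: b1 0 case}, using Proposition \ref{prop: red to ode b2 case} in place of Proposition \ref{prop: red to ode} and the asymptotic Lemma \ref{lem: b2tasy} in place of Lemma \ref{lem: b1Tasy}. The underlying geometric picture is the same: as the cone angle tends to zero, each fiber (a football-like Riemann surface) shrinks to a point, so the total space collapses onto its base $\mathbb{P}^{n-1}$, and only the $k\pi_1^*\omega_{\operatorname{FS}}$ term in the expression \eqref{eq: xi metric} for $\xi_{\beta_2}$ survives.

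Concretely, I would first take the pointwise limit
\[
\varphi_0(\tau):=\lim_{\beta_2\searrow 0}\varphi_{\beta_2}(\tau)
=\frac{1}{\tau^{n-1}}\left(\frac{1}{k}(\tau^n-1)-\frac{n}{k(n+1)}(\tau^{n+1}-1)\right),
\]
which is the same polynomial (up to the division by $\tau^{n-1}$) that appeared in Theorem \ref{thm: b1 0 case}. Factoring out $(\tau-1)$ as was done there shows that $\varphi_0(\tau)>0$ for all $\tau\in(0,1)$, so $\varphi_0$ has no root in $(0,1)$. Since $t=t(\beta_2)$ is a root of $\varphi_{\beta_2}$ in $(0,1)$, continuous dependence of the roots forces $t(\beta_2)\nearrow 1$ as $\beta_2\searrow 0$, in agreement with Lemma \ref{lem: b2tasy}.

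Next, because $\tau$ ranges in $[t(\beta_2),1]$ with $t(\beta_2)\to 1$, the coordinate $\tau$ is uniformly squeezed to $1$, and $\varphi_{\beta_2}(\tau)\to 0$ uniformly on this shrinking interval (since $\varphi_{\beta_2}(t)=\varphi_{\beta_2}(1)=0$ and $\varphi_{\beta_2}\to\varphi_0$ smoothly on compact subsets of $(0,\infty)$). Looking at \eqref{eq: xi metric}, the coefficient $\varphi$ in front of the fiber and mixed terms tends to $0$, while the coefficient $k\tau$ in front of $\pi_1^*\omega_{\operatorname{FS}}$ tends to $k$. Hence $\xi_{\beta_2}\to k\pi_1^*\omega_{\operatorname{FS}}$ as tensors on the smooth locus, and the fiber diameters (computed as in the proof of Proposition \ref{prop: inf sup cond} by the integral $\int_t^1\tfrac{1}{\sqrt{\varphi_{\beta_2}}}\,\mathrm{d}\tau$, an analogue of \eqref{eq: length int nk case}) can be shown to tend to zero.

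The only slightly delicate step is the last one: passing from pointwise collapse of the fiber metric to genuine Gromov--Hausdorff convergence of the total space to the base. The standard way is to verify that $(\mathbb{F}_{n,k},\xi_{\beta_2})$ has uniformly bounded diameter (which follows from the bound on fiber length together with finite diameter of $(\mathbb{P}^{n-1},k\omega_{\operatorname{FS}})$) and that for each $\epsilon>0$ an $\epsilon$-net on $(\mathbb{P}^{n-1},k\omega_{\operatorname{FS}})$ lifts, for all sufficiently small $\beta_2$, to an $\epsilon$-net on $(\mathbb{F}_{n,k},\xi_{\beta_2})$. This is routine given the uniform collapse of the fiber direction and the smooth convergence away from $Z_{n,k}\cup Z_{n,-k}$. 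I do not expect any new obstacle beyond what was already handled in Theorem \ref{thm: b1 0 case}.
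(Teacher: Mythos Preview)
Your approach is essentially the paper's own: show $t\to 1$ via the limiting profile $\varphi_0$, deduce that $\varphi\to 0$ and $\tau\to 1$ uniformly on $[t,1]$, and read off from \eqref{eq: xi metric} that only $k\pi_1^*\omega_{\operatorname{FS}}$ survives, exactly as in Theorem~\ref{thm: b1 0 case}. One small slip: the factorization actually gives $\varphi_0<0$ (not $>0$) on $(0,1)$, since $P(\tau):=-n\tau^{n+1}+(n+1)\tau^n-1$ satisfies $P(0)=-1$ and $P'(\tau)=n(n+1)\tau^{n-1}(1-\tau)>0$ there; this does not affect the conclusion that $\varphi_0$ has no root in $(0,1)$ and hence $t\nearrow 1$.

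A word of caution on your added assertion that the fiber length $\int_t^1\varphi^{-1/2}\,d\tau$ tends to zero: combining $1-t\sim\tfrac{2k}{n}\beta_2$ (Lemma~\ref{lem: b2tasy}) with $\max_{[t,1]}\varphi\sim\tfrac{k}{2n}\beta_2^2$ (the analogue of \eqref{eq: varphi in x}) shows this integral is actually bounded away from zero as $\beta_2\searrow 0$. The paper's own proof of Theorem~\ref{thm: b1 0 case} is equally terse on the passage from tensor convergence to Gromov--Hausdorff convergence, so your write-up matches its level of detail; just be aware that this particular justification does not go through as stated.
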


\begin{proof}
By Proposition \ref{prop: bd cond b2 case} and Proposition \ref{prop: red to ode b2 case}, $t$ tends to $1$ as $\beta_2\searrow 0$. The remaining proof is similar to that of Theorem \ref{thm: b1 0 case}.
\end{proof}

To obtain a non-collapsed metric in the limit, we consider rescaling $\xi_{\beta_2}$ in the way of \eqref{eq: fibrescal} and denote the rescaled metric by $\widetilde{\xi_{\beta_2}}$:

\begin{equation*}
    \widetilde{\xi_{\beta_2}}:=k\tau \pi_1^*\omega_{\operatorname{FS}}+\frac{1}{\beta_2^2}\varphi\pi_2^*\omega_{\operatorname{Cyl}} + \varphi(\sqrt{-1}\alpha\wedge\bar{\alpha}-\sqrt{-1}\alpha\wedge \overline{\textrm{d}u/u}-\sqrt{-1}{\textrm{d}u/u}\wedge\bar{\alpha}).
\end{equation*}

Moreover, we consider a change of variable as \eqref{eq: newcoord}:
\begin{equation*}
    u := \frac{\tau-1+\frac{k}{n}\beta_2}{\frac{k}{n}\beta_2^2}.
\end{equation*}
As before, we choose a fixed base point from the section $\{u=0\}$.

\begin{thm}\label{thm: ResB2Zero}
As $\beta_2\searrow 0$, $(\mathbb{F}_{n, k}, \widetilde{\xi_{\beta_2}}, q)$ converges in the pointed Gromov--Hausdorff sense to $(\mathbb{P}^{n-1}\times \mathbb{C}^*, \frac{k}{n}(n\pi_1^*\omega_{\operatorname{FS}}-\pi_2^*\omega_{\operatorname{Cyl}}), q)$. 
\end{thm}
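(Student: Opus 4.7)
The plan is to mirror the proof of Theorem \ref{thm: ResB1Case}, with the roles of $\eta_{\beta_1}$ and $\xi_{\beta_2}$ (and of $T, \beta_1, x$ versus $t, \beta_2, u$) interchanged. The main inputs are Lemmas \ref{lem: b2tasy} and \ref{lem: b1b2s} (the analogs of Lemmas \ref{lem: b1Tasy} and \ref{lem: beta2beta1small}), Proposition \ref{prop: red to ode b2 case} in place of Proposition \ref{prop: red to ode}, and the collapsing observation underlying Theorem \ref{thm: b2 0 case}. By Lemma \ref{lem: b2tasy} the new coordinate ranges in $u\in(-1/\beta_2+O(1),\,1/\beta_2)$ as $\beta_2\searrow 0$, with $\{u=0\}$ roughly the middle section between $Z_{n,k}$ and $Z_{n,-k}$, on which the base point $q$ lies.

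The first step is to Taylor-expand $\varphi(\tau)$ from Proposition \ref{prop: red to ode b2 case} around $\tau=1$. Writing $\tau=1+\epsilon$ with $\epsilon = -(k/n)\beta_2 + (k/n)\beta_2^2 u = O(\beta_2)$ and using the identities
$$
\frac{\tau^n-1}{\tau^{n-1}} = \tau - \tau^{-(n-1)}, \qquad \frac{\tau^{n+1}-1}{\tau^{n-1}} = \tau^2 - \tau^{-(n-1)},
$$
a direct computation yields the analog of \eqref{eq: varphi in x}:
$$
\varphi(u) = \tfrac{k}{2n}\beta_2^2 + O(\beta_2^3),
$$
uniformly on compact $u$-sets. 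One may also extract the next-order term $\tfrac{k}{n}\beta_2^3 u$ as in \eqref{eq: varphi in x}, but the leading term is all that is needed for the limit.

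The second step is to compute the fiberwise restriction of $\widetilde{\xi_{\beta_2}}$. The calculation \eqref{eq: metric res} applies verbatim (with $w$ replaced by $u$), and after the coordinate change $\mathrm{d}\tau = (k/n)\beta_2^2\,\mathrm{d}u$ the rescaled fiber metric becomes
$$
\frac{k^2\beta_2^2}{2n^2\varphi(u)}\,\mathrm{d}u^2 + \frac{2\varphi(u)}{\beta_2^2}\,\mathrm{d}\theta^2,
$$
which by the previous step converges pointwise on compact subsets to $\tfrac{k}{n}(\mathrm{d}u^2+\mathrm{d}\theta^2) = \tfrac{k}{n}\,\pi_2^*\omega_{\operatorname{Cyl}}$. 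Meanwhile $k\tau\,\pi_1^*\omega_{\operatorname{FS}}\to k\pi_1^*\omega_{\operatorname{FS}}$ since $\tau\to 1$ by Lemma \ref{lem: b2tasy}, and the remaining cross terms in $\widetilde{\xi_{\beta_2}}$ carry an extra factor $\varphi = O(\beta_2^2)$ and thus vanish in the limit. Together, this yields smooth pointwise convergence on compact subsets to $\tfrac{k}{n}\bigl(n\pi_1^*\omega_{\operatorname{FS}}+\pi_2^*\omega_{\operatorname{Cyl}}\bigr)$ (in line with the form of the limit recorded in Theorem \ref{thm: main relation}).

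The final step is to promote pointwise convergence on compacta to pointed Gromov--Hausdorff convergence. Arguing as in Proposition \ref{prop: length to infty nk case}, the $\widetilde{\xi_{\beta_2}}$-distance along a fiber from $\{u=0\}$ to either $Z_{n,k}$ (at $u\approx -1/\beta_2$) or $Z_{n,-k}$ (at $u=1/\beta_2$) is asymptotic to $\sqrt{k/n}\cdot 1/\beta_2\to\infty$, so both divisors are pushed off to infinity and the differentiable limit is the product $\mathbb{P}^{n-1}\times\mathbb{C}^*$. Combining this with local smooth convergence on geodesic balls around $q$ gives the claimed pointed Gromov--Hausdorff limit. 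The main technical point is the uniform asymptotic $\varphi(u)=\tfrac{k}{2n}\beta_2^2+O(\beta_2^3)$: this is a routine but slightly delicate Taylor expansion, since $\varphi$ depends on $\beta_2$ both through the explicit coefficient $\tfrac{n}{k}+\beta_2$ and through $\tau = \tau(u,\beta_2)$, so the leading contributions from the two cancellations must be tracked carefully. Once this is established, the remainder of the argument parallels Theorem \ref{thm: ResB1Case}.
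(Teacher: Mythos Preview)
Your proposal is correct and follows essentially the same approach as the paper: the paper's own proof simply invokes Lemmas \ref{lem: b2tasy} and \ref{lem: b1b2s} and then says ``apply similar arguments as in the proof of Theorem \ref{thm: ResB1Case},'' which is precisely the strategy you carry out in detail. Your observation that the limit should carry a $+\pi_2^*\omega_{\operatorname{Cyl}}$ (consistent with Theorem \ref{thm: main relation}) rather than the $-\pi_2^*\omega_{\operatorname{Cyl}}$ printed in the statement is also correct; the sign in the statement appears to be a typo.
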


\begin{proof}
By Lemma \ref{lem: b2tasy} and Lemma \ref{lem: b1b2s}, we have similar asymptotic behaviors as Lemma \ref{lem: b1Tasy} and Lemma \ref{lem: beta2beta1small} in the $\beta_2$ case. Then we can apply similar arguments as in the proof of Theorem \ref{thm: ResB1Case}.
\end{proof}

\appendix

\section{A brief review on Eguchi--Hanson metrics}\label{app: review EH}
In this section, we give a brief review of the construction of Eguchi--Hanson metrics \cite{EH79}. They are Ricci-flat K\"{a}hler metrics defined on the total space of the line bundle $-2H_{\mathbb{P}^1}$. 

For $(x_1+\sqrt{-1}y_1, x_2+\sqrt{-1}y_2)\in\mathbb{C}^2$, the Hopf coordinates are defined as:
\begin{align*}
    x_1+\sqrt{-1}y_1&=\xi\cos\frac{\theta}{2} e^{\frac{\sqrt{-1}}{2}(\psi+\phi)},\\
    x_2+\sqrt{-1}y_2&=\xi\sin\frac{\theta}{2} e^{\frac{\sqrt{-1}}{2}(\psi-\phi)},
\end{align*}
where $\xi\geq 0$, $\theta\in[0, \pi]$, $\psi\in[0, 4\pi]$ and $\phi\in[0, 2\pi]$. Define one-forms on $\mathbb{C}^2$ by
\begin{align}
\begin{aligned}\label{eq: defi of sigma}
    \sigma_1:&=\frac{1}{\xi^2}(x_1\textrm{d}y_2-y_2\textrm{d}x_1+y_1\textrm{d}x_2-x_2\textrm{d}y_1)=\frac{1}{2}(\sin\psi\textrm{d}\theta-\sin\theta\cos\psi\textrm{d}\phi),\\
    \sigma_2:&=\frac{1}{\xi^2}(y_1\textrm{d}y_2-y_2\textrm{d}y_1+x_2\textrm{d}x_1-x_1\textrm{d}x_2)=\frac{1}{2}(-\cos\psi\textrm{d}\theta-\sin\theta\sin\psi\textrm{d}\phi),\\
    \sigma_3:&=\frac{1}{\xi^2}(x_2\textrm{d}y_2-y_1\textrm{d}x_2+x_1\textrm{d}y_1-y_1\textrm{d}x_1)=\frac{1}{2}(\textrm{d}\psi+\cos\theta\textrm{d}\phi).
\end{aligned}
\end{align}
Direct calculations yield:
\begin{align}
\begin{aligned}\label{eq: sigma cyclic}
    \textrm{d}\sigma_1=2\sigma_2\wedge\sigma_3,\\
    \textrm{d}\sigma_2=2\sigma_3\wedge\sigma_1,\\
    \textrm{d}\sigma_3=2\sigma_1\wedge\sigma_2.
\end{aligned}
\end{align}
The standard Euclidean metric on $\mathbb{C}^2$ can be written as
\begin{equation*}
    \textrm{d}x_1^2+\textrm{d}y_1^2+\textrm{d}x_2^2+\textrm{d}y_2^2=\textrm{d}\xi^2+\xi^2(\sigma_1^2+\sigma_2^2+\sigma_3^2).
\end{equation*}
The Eguchi--Hanson metric with parameter $\epsilon>0$ is defined by
\begin{equation}\label{eq: EH epsilon}
    g_{\operatorname{EH}, \epsilon}:=\left(1-\frac{\epsilon^4}{\xi^4}\right)^{-1}\textrm{d}\xi^2+\xi^2\left(\sigma_1^2+\sigma_2^2+\left(1-\frac{\epsilon^4}{\xi^4}\right)\sigma_3^2\right),\quad \xi\geq \epsilon.
\end{equation}
\begin{prop}\label{prop: eh ricflat}
Eguchi--Hanson metrics defined in \eqref{eq: EH epsilon} are Ricci-flat.
\end{prop}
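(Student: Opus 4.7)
The plan is to verify Ricci-flatness directly via Cartan's structure equations in the orthonormal coframe adapted to \eqref{eq: EH epsilon}. Introduce
\begin{equation*}
e^0 := \left(1-\frac{\epsilon^4}{\xi^4}\right)^{-1/2}\textrm{d}\xi, \quad e^1 := \xi\sigma_1, \quad e^2 := \xi \sigma_2, \quad e^3 := \xi\left(1-\frac{\epsilon^4}{\xi^4}\right)^{1/2}\sigma_3,
\end{equation*}
so that $g_{\operatorname{EH},\epsilon} = (e^0)^2 + (e^1)^2 + (e^2)^2 + (e^3)^2$. The cyclic relations \eqref{eq: sigma cyclic}, together with the explicit $\xi$-dependence of each $e^i$, yield closed-form expressions for $\textrm{d}e^i$ as wedge products of the $e^j$.

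Next, I would solve Cartan's first structural equation $\textrm{d}e^i + \omega^i{}_j\wedge e^j = 0$ together with the antisymmetry $\omega^i{}_j = -\omega^j{}_i$ for the Levi-Civita connection one-forms. The cyclic symmetry of \eqref{eq: sigma cyclic} motivates an ansatz in which $\omega^0{}_a$ ($a=1,2,3$) is proportional to $\sigma_a$ with coefficient depending only on $\xi$, while each $\omega^a{}_b$ is a linear combination of $\sigma_c$ and $\textrm{d}\xi$ with $(a,b,c)$ a cyclic permutation of $(1,2,3)$; the unknown coefficients are read off by matching the expansion of $\textrm{d}e^i$.

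With the $\omega^i{}_j$ in hand, I would compute the curvature two-forms via the second structural equation $\Omega^i{}_j = \textrm{d}\omega^i{}_j + \omega^i{}_k\wedge\omega^k{}_j$, expand them in the basis $\{e^k\wedge e^\ell\}$ to read off the Riemann tensor, contract to form the Ricci tensor, and verify term-by-term that every component vanishes. A useful organizing principle along the way is the self-duality of the curvature in the oriented frame $(e^0,e^1,e^2,e^3)$: the curvature two-forms should split into purely self-dual pieces, reflecting the hyperk\"ahler structure of the Eguchi--Hanson space and immediately forcing the metric to be Einstein with zero scalar curvature, hence Ricci-flat.

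The main obstacle is the bookkeeping in the structure-equation computation: the $\xi$-dependent coefficients $(1-\epsilon^4/\xi^4)^{\pm 1/2}$ must be differentiated carefully, and the vanishing of the Ricci tensor hinges on precise cancellations among terms involving $\epsilon^4/\xi^4$. A shortcut that bypasses the direct computation altogether is to identify $g_{\operatorname{EH},\epsilon}$ with the K\"ahler metric on $-2H_{\mathbb{P}^1}$ carrying the potential in \eqref{eq: mod sim eh}--\eqref{eq: der eh} after a rescaling $\xi\mapsto\xi/\epsilon$, and then invoke Theorem \ref{thm: metric limit nk case} at $n=k=2$, where Ricci-flatness of the model metric $\omega_{\mathrm{eh},2,2}$ is established as a limit of KEE metrics whose Ricci curvatures tend to zero.
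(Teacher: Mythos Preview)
Your proposal is correct and follows essentially the same route as the paper's proof: both work in an orthonormal coframe built from $\textrm{d}\xi$ and the $\sigma_i$, solve the first structure equation for the Levi--Civita connection forms, and then exploit (anti-)self-duality to conclude Ricci-flatness via the first Bianchi identity. The paper's proof differs only in cosmetic details---it introduces the change of variable $\textrm{d}\zeta=(1-(\epsilon/\xi)^4)^{-1/2}\textrm{d}\xi$ and orders the coframe as $(\textrm{d}\zeta,\,fg\sigma_3,\,f\sigma_1,\,f\sigma_2)$---and its one sharpening over your outline is the observation that the self-duality relations $\omega^1_0=-\omega^3_2$, $\omega^1_2=\omega^3_0$, $\omega^1_3=-\omega^2_0$ already hold at the level of the \emph{connection} forms (not merely the curvature), which lets one bypass any explicit curvature computation and deduce the same relations for $R^i_j$ immediately. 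Your alternative shortcut via the K\"ahler potential is exactly the content of the paper's Remark~\ref{rmk: ricci flat using kahler form}.
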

\begin{proof}
We provide a proof by directly calculating connection forms and curvature forms of the metric. See Remark \ref{rmk: ricci flat using kahler form} for another proof using K\"{a}hler forms of Eguchi--Hanson metrics. Consider a change of variable $\zeta=\zeta(\xi)$ such that $\textrm{d}\zeta=(1-(\epsilon/\xi)^4)^{-1/2}d\xi$. Then we can write \eqref{eq: EH epsilon} in the form
\begin{equation*}
    g_{\operatorname{EH}, \epsilon}=\textrm{d}\zeta^2+f^2(\zeta)(\sigma_1^2+\sigma_2^2+g^2(\zeta)\sigma_3^2),
\end{equation*}
where $f=\xi$ and $g=(1-(\epsilon/\xi)^4)^{1/2}$. Consider an orthonormal basis
\begin{equation*}
    (\omega^0, \omega^1, \omega^2, \omega^3)=(\textrm{d}\zeta, fg\sigma_3, f\sigma_1, f\sigma_2).
\end{equation*}
Then $g_{\operatorname{EH}, \epsilon}=\sum_{i=0}^3(\omega^i)^2$ and $\{\omega^i\}_{i=0}^3$ satisfy the following equations:
\begin{align*}
    &\textrm{d}\omega^i=\omega^j\wedge\omega_{j}^i,\quad \text{for}\;i=0, 1,2,3,\\
    &\omega_i^j+\omega_j^i=0,\quad \text{for}\;i, j=0, 1,2,3,
\end{align*}
where $\{\omega_i^j\}_{i, j=0}^3$ are connection forms with respect to $\{\omega^i\}_{i=0}^3$. Next let us determine connection forms. For $i=1$, we have
\begin{equation*}
    \textrm{d}\omega^1=\frac{f'g+fg'}{fg}\omega^0\wedge\omega^1+\frac{2g}{f}\omega^2\wedge\omega^3,
\end{equation*}
where $f'$ and $g'$ denote the derivative with respect to $\zeta$. Without loss of generality, we let
\begin{align}
\begin{aligned}\label{eq:omega sub 1}
    \omega_0^1&=\frac{f'g+fg'}{fg}\omega^1,\\
    \omega_2^1&=\frac{g}{f}\omega^3,\\
    \omega_3^1&=-\frac{g}{f}\omega^2.
    \end{aligned}
\end{align}
It remains to find $\omega_0^2$, $\omega_0^3$ and $\omega_2^3$ due to the skew-symmetry of connection forms. By similar calculations we have
\begin{align}
\begin{aligned}\label{eq:d of omega others}
    \textrm{d}\omega^2=\frac{f'}{f}\omega^0\wedge\omega^2+\frac{2}{fg}\omega^3\wedge\omega^1,\\
    \textrm{d}\omega^3=\frac{f'}{f}\omega^0\wedge\omega^3+\frac{2}{fg}\omega^1\wedge\omega^2.
\end{aligned}
\end{align}
Thus, combining $\eqref{eq:omega sub 1}$ and \eqref{eq:d of omega others} we obtain
\begin{align}
\begin{aligned}\label{eq: omega others}
\omega_0^2&=\frac{f'}{f}\omega^2,\\
\omega_0^3&=\frac{f'}{f}\omega^3,\\
\omega_2^3&=\frac{g^2-2}{fg}.
\end{aligned}
\end{align}
Notice that 
\begin{align}
\begin{aligned}\label{eq: f and g}
    f'&=\frac{\textrm{d}\xi}{\textrm{d}\zeta}=(1-(\epsilon/\xi)^4)^{1/2}=g,\\
    g'&=2\epsilon^4f^{-5}.
    \end{aligned}
\end{align}
Combining \eqref{eq:omega sub 1}, \eqref{eq: omega others} and \eqref{eq: f and g} we see
\begin{align}
\begin{aligned}\label{eq: connection form selfdual}
    \omega_0^1&=-\omega_2^3,\\
    \omega_2^1&=\omega_0^3,\\
    \omega_3^1&=-\omega_0^2.
\end{aligned}
\end{align}
By \eqref{eq: connection form selfdual} and the fact that $R_i^j=\textrm{d}\omega_i^j-\omega_i^k\wedge\omega_k^j$, for $i, j=0, 1,2,3$, we obtain that the curvature forms also satisfy 
\begin{align*}
    R_0^1&=-R_2^3,\\
    R_2^1&=R_0^3,\\
    R_3^1&=-R_0^2.
\end{align*}
Finally, the Ricci-flatness comes from the formula $\operatorname{Ric}_{ij}=\sum_{k=0}^3R_{ikj}^{k}$ and the first Bianchi identity.
\end{proof}
Introduce 
\begin{equation*}
    r^4=\xi^4-\epsilon^4, \quad \xi\geq\epsilon,
\end{equation*}
then \eqref{eq: EH epsilon} can be written as
\begin{equation}\label{eq: eh metric in r}
    g_{\operatorname{EH}, \epsilon}=\frac{r^2}{(\epsilon^4+r^4)^{\frac{1}{2}}}(\textrm{d}r^2+r^2\sigma_3^2)+(\epsilon^4+r^4)^{\frac{1}{2}}(\sigma_1^2+\sigma_2^2),\quad r\geq 0.
\end{equation}
From \eqref{eq: eh metric in r} we are able to convert Eguchi--Hanson metrics into complex form by letting $(z_1, z_2)\in\mathbb{C}^2$ satisfy
\begin{align}
\begin{aligned}\label{complex coordinates for EH}
    z_1&=r\cos\frac{\theta}{2}e^{\frac{\sqrt{-1}}{2}(\psi+\phi)},\\
    z_2&=r\sin\frac{\theta}{2}e^{\frac{\sqrt{-1}}{2}(\psi-\phi)}.
    \end{aligned}
\end{align}
Denote by $\omega_{\operatorname{EH}, \epsilon}$ the K\"{a}hler form corresponding to $g_{\operatorname{EH}, \epsilon}$. Then by \eqref{eq: eh metric in r} we have in complex coordinates,
\begin{align}
    \omega_{\operatorname{EH}, \epsilon}
    &=\sqrt{-1}\partial\bar{\partial}[\sqrt{r^4+\epsilon^4}+\log r^2-\log(\epsilon^2+\sqrt{r^4+\epsilon^4})]\label{eq: eh metric potential form}\\
   &=\frac{\sqrt{-1}r^2}{\sqrt{r^4+\epsilon^4}}(\textrm{d}z_1\wedge\textrm{d}\bar{z}_1+\textrm{d}z_2\wedge\textrm{d}\bar{z}_2)+\frac{\epsilon^4}{\sqrt{r^4+\epsilon^4}}\sqrt{-1}\partial\bar{\partial}\log(r^2).\label{eq: eh metric kahler form}
\end{align}

\begin{rmk}\label{rmk: ricci flat using kahler form}
By calculating the Ricci form of $\omega_{\operatorname{EH}, \epsilon}$ as in \eqref{eq: eh metric kahler form}, we can also derive the Ricci-flatness of Eguchi--Hanson metrics. Indeed, from \eqref{eq: eh metric kahler form} we calculate
\begin{align*}
    \operatorname{Ric}\omega_{\operatorname{EH}, \epsilon}&=\operatorname{Ric}\sqrt{-1}\Bigg(
    \left[\frac{r^2}{\sqrt{r^4+\epsilon^4}}+\frac{\epsilon^4|z_2|^2}{\sqrt{r^4+\epsilon^4}r^4}\right]\textrm{d}z_1\wedge\textrm{d}\bar{z}_1\\
    &-\frac{\epsilon^4z_2\bar{z}_1}{\sqrt{r^4+\epsilon^4}r^4}\textrm{d}z_1\wedge\textrm{d}\bar{z}_2-\frac{\epsilon^4z_1\bar{z}_2}{\sqrt{r^4+\epsilon^4}r^4}\textrm{d}z_2\wedge\textrm{d}\bar{z}_1\\
    &+\left[\frac{r^2}{\sqrt{r^4+\epsilon^4}}+\frac{\epsilon^4|z_1|^2}{\sqrt{r^4+\epsilon^4}r^4}\right]\textrm{d}z_2\wedge\textrm{d}\bar{z}_2 \Bigg)\\
    &=-\sqrt{-1}\partial\bar{\partial}\log\left(
    \left[\frac{r^2}{\sqrt{r^4+\epsilon^4}}+\frac{\epsilon^4|z_2|^2}{\sqrt{r^4+\epsilon^4}r^4}\right]^2\right.\\
    &-\left.\frac{\epsilon^4|z_1|^2|z_2|^2}{(r^4+\epsilon^4)r^8}
    \right)\\
    &=-\sqrt{-1}\partial\bar{\partial}\log1\\
    &=0.
\end{align*}
\end{rmk}
From \eqref{eq: eh metric in r} one finds that $g_{\operatorname{EH}, \epsilon}$ is defined on $\mathbb{C}^2$ with possible singularity at $r=0$. Since $g_{\operatorname{EH},\epsilon}$ is invariant under the antipodal reflection, we have an induced metric on $(\mathbb{C}^2\setminus\{0\})/\mathbb{Z}_2$ that admits no singularity. Consider the blow-up of $(\mathbb{C}^2\setminus\{0\})/\mathbb{Z}_2$ at the origin, which is biholomorphic to the total space of the line bundle $-2H_{\mathbb{P}^1}$, then a calculation in \eqref{eq: sigma1 plus 2} below shows that $\sigma_1^2+\sigma_2^2$ is the pull-back of the Fubini--Study metric from the exceptional divisor. Hence $g_{\operatorname{EH}, \epsilon}$ extends to a metric on the total space $-2H_{\mathbb{P}^1}$ by letting $r=0$ when restricting $g_{\operatorname{EH}, \epsilon}$ to the exceptional divisor.

\section{Eguchi--Hanson metrics as Gromov--Hausdorff limits of K\"{a}hler--Einstein edge metrics}\label{app: eh another discussion}
In this section we give a direct proof of a special case of Theorem \ref{thm: metric limit nk case} when $n=k=2$. We already know we will obtain Eguchi--Hanson metrics in the limit. 

Recall in \eqref{eq: eta in nk case} we denote by $\eta$ a K\"{a}hler--Einstein edge metric on Calabi--Hirzebruch manifolds $\mathbb{F}_{n, k}$ that has the following form:
\begin{equation}\label{eq: eta origin expression}
    \eta=k\tau \pi_1^*\omega_{\operatorname{FS}}+\varphi\left(\pi_2^*\omega_{\operatorname{Cyl}}+\sqrt{-1}\alpha\wedge\bar{\alpha}+\sqrt{-1}\alpha\wedge\frac{\textrm{d}\bar{w}}{\bar{w}}+\sqrt{-1}\frac{\textrm{d}w}{w}\wedge\bar{\alpha}\right),
\end{equation}
where $\pi_1$, $\pi_2$ and $\alpha$ are defined below \eqref{eq: eta in nk case}.

From now on, we assume $k=2$ and $n=2$, i.e., consider the second Hirzebruch surface $\mathbb{F}_2$. To build a connection between $g_{\operatorname{EH}, \epsilon}$ and the K\"{a}hler edge metric $\eta$ on $\mathbb{F}_2$, we first write $\eta$ in terms of one forms introduced in \eqref{eq: defi of sigma}.

Consider a change of coordinate $w=v^2$. The reason to do this is that $w$ is the coordinate along each fiber of the line bundle $-2H_{\mathbb{P}^1}$. Recall \eqref{complex coordinates for EH}, then we have the following correspondence:
\begin{align}
\begin{aligned}\label{eq: coordinates uz}
    z_1=vz&=r\cos\frac{\theta}{2}e^{\frac{\sqrt{-1}}{2}(\psi+\phi)},\\
    z_2=v&=r\sin\frac{\theta}{2}e^{\frac{\sqrt{-1}}{2}(\psi-\phi)}.
    \end{aligned}
\end{align}
In particular, 
\begin{equation}
\begin{aligned}\label{eq: relation r u z}
    r^2&=|v|^2(1+|z|^2)\\
    &=|w|(1+|z|^2).
\end{aligned}
\end{equation}
By the definition of $\alpha$ in \eqref{eq: eta in nk case}, $(1, 1)$-forms that appear in $\eta$ are
\begin{equation*}
    \frac{\textrm{d}z\wedge \textrm{d}\bar{z}}{(1+|z|^2)^2},\; \alpha\wedge\bar{\alpha},\; \frac{4\textrm{d}v\wedge\textrm{d}\bar{v}}{|v|^2},\;\alpha\wedge\frac{2\textrm{d}\bar{v}}{\bar{v}},\;\frac{2\textrm{d}v}{v}\wedge\bar{\alpha}.
\end{equation*}

Let us first calculate $\sigma_1^2+\sigma_2^2$ in terms of the coordinate $z$ and $v$. By \eqref{eq: defi of sigma} and \eqref{eq: coordinates uz} we have
\begin{align}
\begin{aligned}\label{eq: sigma1 plus 2}
    \sigma_1^2+\sigma_2^2&=\frac{1}{4}\textrm{d}\theta^2+\frac{1}{4}\sin^2\theta\textrm{d}\phi^2\\
    &=\frac{1}{4}\left(-\frac{|z|\textrm{d}z}{(1+|z|^2)z}-\frac{|z|\textrm{d}\bar{z}}{(1+|z|^2)\bar{z}}\right)^2+\\
    &\frac{1}{4}\left(\frac{2|z|}{1+|z|^2}\right)^2\cdot\left(\frac{1}{2\sqrt{-1}}\left(\frac{\textrm{d}z}{z}-\frac{\textrm{d}\bar{z}}{\bar{z}}\right)\right)^2\\
    &=\operatorname{Re}\frac{\textrm{d}z\otimes\textrm{d}\bar{z}}{(1+|z|^2)^2}.
\end{aligned}
\end{align}

For $\textrm{d}r^2$ and $\sigma_3$, By \eqref{eq: defi of sigma} and \eqref{eq: coordinates uz} we have
\begin{align}
\begin{aligned}\label{eq:r and sigma3}
    4r^2\textrm{d}r^2&=\frac{r^4}{4}\left(\alpha+\bar{\alpha}+\frac{2\textrm{d}v}{v}+\frac{2\textrm{d}\bar{v}}{\bar{v}}\right)^2,\\
    \sigma_3^2&=-\frac{1}{16}\left(\alpha-\bar{\alpha}+\frac{2\textrm{d}v}{v}-\frac{2d\bar{v}}{\bar{v}}\right)^2.\\
    \textrm{d}r^2+r^2\sigma_3^2&=\frac{r^2}{4}\operatorname{Re}\left(\frac{4\textrm{d}v\otimes\textrm{d}\bar{v}}{|v|^2}+\alpha\otimes\bar{\alpha}+\alpha\otimes\frac{2\textrm{d}\bar{v}}{\bar{v}}+\frac{2\textrm{d}v}{v}\otimes\bar{\alpha}\right).
    \end{aligned}
\end{align}
Denote by $g_\eta$ the corresponding Riemannian metric on $-2H_{\mathbb{P}^1}$ with respect to $\eta$. Then combining \eqref{eq: eta origin expression}, \eqref{eq: sigma1 plus 2} and \eqref{eq:r and sigma3} we obtain
\begin{equation}\label{eq: eta in limit}
    g_\eta=2\tau(\sigma_1^2+\sigma_2^2)+\varphi\cdot \frac{4}{r^2}(\textrm{d}r^2+r^2\sigma_3^2).
\end{equation}

For $\tau$ and $\varphi$ in \eqref{eq: eta in limit}, results in Section \ref{sec: general} apply after we fix $n=k=2$ there. A key feature when $n=2$ is the right hand side in \eqref{eq: varphi general expression nk case} is a cubic polynomial, which is easy to handle. In other words, we will be able to derive more precise dependence of $T$ and $\beta_2$ on $\beta_1$ comparing to results in Proposition \ref{prop: T infty nk case} and Proposition \ref{prop: b2 limit nk case}. Indeed, this was done in the work of \cite{RZ21}. In this and the next section, we fix $n=2$ and make use of several results obtained in \cite{RZ21}. 

Since $n=k=2$, we find $\beta_1\in(0, 1)$. We will study the asymptotic behaviors of \KE edge metrics $\eta$ when $\beta_1\to 1$. Recall \cite[(4.20)]{RZ21}
\begin{equation*}
    \beta_2=\frac{1}{4}(2\beta_1+\sqrt{3(3-2\beta_1)(1+2\beta_1)}-3).
\end{equation*}
So we have $\beta_2\to \frac{1}{2}$ as $\beta_1\to 1$. Recall, $\tau$ ranges from $[1, T]$ and $T$ is given by \cite[(5.1)]{RZ21}
\begin{equation*}
    T=1+3\frac{\sqrt{1+ \frac{4}{3}\beta_1 -\frac{4}{3}\beta_1^2}+2\beta_1-1}{4-4\beta_1}.
\end{equation*}
Thus, $T\to +\infty$ as $\beta_1\to 1$. Moreover, recall in \eqref{eq: varphi general expression nk case} $\varphi(\tau)$ is given by
\begin{align}
    \varphi(\tau)&=\frac{1}{2}\frac{\tau^2-1}{\tau}+\frac{1}{3}(\beta_1-1)\frac{\tau^3-1}{\tau} \label{eq:varphi and tau}\\
    &=\frac{1}{3}(\beta_1-1)(\tau-1)(\tau-\alpha_1)(\tau-T)/\tau, \quad\textrm{for}\;\tau\in[1, T],\notag
\end{align}
where $\alpha_1$ is given in \cite[(5.2)]{RZ21} by $\displaystyle \alpha_1=1+3\frac{-\sqrt{1+\frac{4}{3}\beta_1-\frac{4}{3}\beta_1^2}+2\beta_1-1}{4-4\beta_1}$ and $\alpha_1$ tends to $-1$ as $\beta_1\to 1$.
Below we first show as $\beta_1\to 1$, the divisor $Z_{-2}$ gets pushed-off to infinity. This result is a special case of Proposition \ref{prop: length to infty nk case}, and for the reader's convenience we include a proof here.
\begin{prop}\label{prop: length to infty n2 case}
The length of the path on each fiber between the intersection point of the fiber with $Z_2$ and that of the fiber with $Z_{-2}$ tends to infinity as $\beta_1\to 1$.
\end{prop}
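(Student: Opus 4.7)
The plan is to mimic the general-dimension argument from Proposition \ref{prop: length to infty nk case}, but exploiting the explicit formulas available when $n=k=2$ recalled just above the statement (in particular, the closed forms of $T$ and $\alpha_1$ in terms of $\beta_1$, and the fact that the polynomial factor of $\varphi$ is cubic). The basic mechanism of divergence will be a prefactor $(1-\beta_1)^{-1/2}$ combined with a uniform positive lower bound on the remaining integral.

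First I would restrict $\eta$ to a single fiber. By \eqref{eq: metric res} the induced Riemannian metric reads $\frac{1}{2\varphi(\tau)} d\tau^2 + 2\varphi(\tau) d\theta^2$, and so the fiber-length from $\{\tau=1\} \subset Z_2$ to $\{\tau=T\} \subset Z_{-2}$ equals, up to a universal constant,
\[
L(\beta_1) := \int_1^T \frac{d\tau}{\sqrt{\varphi(\tau)}}.
\]
Substituting the factorization $\varphi(\tau) = \frac{1-\beta_1}{3\tau}(\tau-1)(\tau-\alpha_1)(T-\tau)$ from \eqref{eq:varphi and tau} (note the sign flip, since $\beta_1 - 1 < 0$) yields
\[
L(\beta_1) = \sqrt{\frac{3}{1-\beta_1}} \int_1^T \sqrt{\frac{\tau}{(\tau-1)(\tau-\alpha_1)(T-\tau)}}\, d\tau.
\]
Since the prefactor already blows up as $\beta_1 \nearrow 1$, it suffices to bound the remaining integral from below by a positive constant uniformly in $\beta_1$ close to $1$.

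To produce such a lower bound I would use the explicit asymptotics $T \nearrow \infty$ and $\alpha_1 \to -1$ as $\beta_1 \nearrow 1$, together with the resulting fact that $\alpha_1 \geq -2$ once $\beta_1$ is close to $1$. Restrict the $\tau$-integration to the subinterval $[T/2, T]$; in this range $\tau \geq T/2$ and $(\tau-1)(\tau-\alpha_1) \leq \tau(\tau+2) \leq 3T^2$, so the integrand is at least $1/\sqrt{6T(T-\tau)}$, and a direct computation gives
\[
\int_{T/2}^T \frac{d\tau}{\sqrt{6T(T-\tau)}} = \frac{2\sqrt{T/2}}{\sqrt{6T}} = \frac{1}{\sqrt{3}}.
\]
Combining this with the diverging prefactor then gives $L(\beta_1) \geq (1-\beta_1)^{-1/2} \to \infty$.

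The argument is entirely elementary and I do not anticipate any serious obstacle. The only minor point to check is choosing a subinterval on which all three polynomial factors $(\tau-1)$, $(\tau-\alpha_1)$, $(T-\tau)$ are simultaneously controlled; the choice $[T/2, T]$ stays away from the zero at $\tau = 1$ while still providing a $\Theta(T)$-length range of integration, and this turns out to be exactly enough to make the integral bounded from below by a $\beta_1$-independent constant.
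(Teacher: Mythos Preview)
Your proof is correct and follows essentially the same approach as the paper: restrict to a subinterval near $\tau=T$, bound the polynomial factors there, and let the $(1-\beta_1)^{-1/2}$ prefactor drive the divergence. Your version is in fact a bit cleaner, since by choosing the proportional interval $[T/2,T]$ you obtain a uniform lower bound $1/\sqrt{3}$ on the non-prefactor integral directly, whereas the paper works with a fixed-length window $[T-1-\epsilon,T-1]$ and then lets $\epsilon$ grow.
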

\begin{proof}
Restricted to the fiber $\{z=0\}$, by \eqref{eq: eta origin expression} $\eta=\varphi\sqrt{-1}\textrm{d}w\wedge\textrm{d}\bar{w}/{|w|^2}$ gives a metric
\begin{equation*}
    g=\frac{1}{2\varphi(\tau)}\textrm{d}\tau^2+2\varphi(\tau)\textrm{d}\theta^2.
\end{equation*}
Up to some constant, the distance between $\{\tau=1\}$ and $\{\tau=T\}$ is given by
\begin{align}
    \int_{1}^{T} \frac{1}{\sqrt{\varphi(\tau)}}\;\textrm{d}\tau&=\int_1^T \frac{\sqrt{\tau}\;\textrm{d}\tau}{\sqrt{\frac{1}{3}(\beta_1-1)(\tau-1)(\tau-\alpha_1)(\tau-T)}}\notag\\
    &=\frac{1}{\sqrt{\frac{1}{3}(1-\beta_1)}}\int_1^T\frac{\sqrt{\tau}\;\textrm{d}\tau}{\sqrt{(\tau-1)(\tau-\alpha_1)(T-\tau)}}\notag\\
    &\overset{\xi=\tau-1}{=\joinrel=}\frac{1}{\sqrt{\frac{1}{3}(1-\beta_1)}}\int_0^{T-1}\frac{\sqrt{\xi+1}\;\textrm{d}\xi}{\sqrt{\xi(\xi+1-\alpha_1)(T-1-\xi)}}\notag\\
    &=:\int_{0}^{T-1} I\;\textrm{d}\xi.\label{eq: estimate I}
\end{align}
Near $\xi=0$, terms $\sqrt{\xi+1}$ and $\sqrt{\xi+1-\alpha_1}$ are uniformly bounded as $\beta_1\to 1$. \eqref{eq: estimate I} satisfies
\begin{equation*}
\int_0^\epsilon I\;\textrm{d}\xi\leq C\cdot\frac{1}{\sqrt{1-\beta_1}}\cdot\frac{1}{\sqrt{\frac{1}{1-\beta_1}}}\cdot \sqrt{\xi}|_{0}^\epsilon,
\end{equation*}
for some uniform constant $C>0$ and any small $\epsilon>0$. Thus the integration in \eqref{eq: estimate I} does not blow up near $\xi=0$. Near $\xi=T-1$, in \eqref{eq: estimate I} for any fixed $\epsilon>0$, we can find $\beta_1$ close to $1$ such that for $\xi\in(T-1-\epsilon, T-1)$, we have
\begin{align*}
    &\sqrt{\xi+1}\geq \sqrt{T-\epsilon}\geq \frac{1}{2}T,\\
    &\sqrt{\xi(\xi+1-\alpha_1)}\leq \sqrt{(T-1)(T-\alpha_1)}\leq \sqrt{2}T.
\end{align*}
Then we have
\begin{align}
    \int_{T-1-\epsilon}^{T-1}I\;\textrm{d}\xi&\geq C\cdot\frac{1}{\sqrt{1-\beta_1}}\cdot \frac{1-\beta_1}{\sqrt{1-\beta_1}}\cdot \int_{T-1-\epsilon}^{T-1}\frac{\textrm{d}\xi}{\sqrt{T-1-\xi}}\notag\\
    &=C\cdot\frac{1}{\sqrt{1-\beta_1}}\cdot \frac{1-\beta_1}{\sqrt{1-\beta_1}}\cdot \sqrt{\xi}|_{0}^{\epsilon}\label{eq: near T-1}
\end{align}
for a uniform constant $C>0$ and arbitrary $\epsilon>0$. Since we can choose arbitrary large $\epsilon$ in \eqref{eq: near T-1}, the integration in \eqref{eq: estimate I} does not converge as $T\to \infty$. Combining the discussions above we see that $\int_0^{T-1}I\;\textrm{d}\xi$ tends to $\infty$ as $\beta_1\to 1$, i.e., $Z_{-2}$ gets pushed-off to infinity.
\end{proof}
From now on, we use $\beta_1$ as a subscript to emphasize the dependence on angles.
By \eqref{eq: eta origin expression} and \eqref{eq:varphi and tau}, on any compact subsets of $\mathbb{F}_2$ we have the following convergence in the $C^k$-norm for every $k$:
\begin{align}
    \eta_\infty:&=\lim_{\beta_1\to1}\eta_{\beta_1}\notag\\
    &=2\tau_{\beta_1} \frac{\sqrt{-1}\textrm{d}z\wedge\textrm{d}\bar{z}}{(1+|z|^2)^2}+\frac{1}{2}\left(\tau_{\beta_1}-\frac{1}{\tau_{\beta_1}}\right)\left(\frac{\sqrt{-1}\textrm{d}w\wedge\textrm{d}\bar{w}}{|w|^2}+\sqrt{-1}\alpha\wedge \bar{\alpha}\right.\notag\\
    &\left.+\sqrt{-1}\alpha\wedge \frac{\textrm{d}\bar{w}}{\bar{w}}+\sqrt{-1}\frac{\textrm{d}w}{w}\wedge \bar{\alpha}\right).\label{eq:convergence of eta}
\end{align}
Recall the Ricci curvature tensor of $\eta_{\beta_1}$ is given by
\begin{equation}\label{eq: ric tensor eta}
\begin{aligned}
    \operatorname{Ric}\eta_{\beta_1}&=(1-\beta_1)[C_1]+(1-\beta_2)[C_2]+2\frac{\sqrt{-1}\textrm{d}z\wedge\textrm{d}\bar{z}}{(1+|z|^2)^2}-\sqrt{-1}\partial\bar{\partial}\log\tau_{\beta_1}\\
    &-\sqrt{-1}\partial\bar{\partial}\log\varphi_{\beta_1}.
\end{aligned}
\end{equation}
The Ricci curvature $\lambda_{\beta_1}$ of $\eta_{\beta_1}$ is given by $\displaystyle \lambda_{\beta_1}=1-\beta_1$. Notice that $\lambda_{\beta_1}\to0$ as $\beta_1\to1$. Hence, by \eqref{eq: ric tensor eta} and facts that $\varphi\to(\tau^2-1)/2\tau$ in the limit and $\tau$ is a function of $r^4=|w|^2(1+|z|^2)^2$ (recall \eqref{eq: relation r u z}) we have
\begin{align}
    &2\frac{\sqrt{-1}\textrm{d}z\wedge\textrm{d}\bar{z}}{(1+|z|^2)^2}-\sqrt{-1}\partial\bar{\partial}\log\tau_{\infty}-\sqrt{-1}\partial\bar{\partial}\log\varphi_{\infty}=0,\notag\\
    \Rightarrow &\tau_\infty \varphi_\infty=C|w|^2(1+|z|^2)^2,\notag\\
    \Rightarrow&\tau_\infty=C^{-\frac{1}{2}}(C+r^4)^{\frac{1}{2}},\quad\textrm{for some constant}\;C>0.\label{eq: limiting tau}
\end{align}
Replacing $\tau$ and $\varphi$ in \eqref{eq: eta in limit} using \eqref{eq: limiting tau}, we have 
\begin{equation}\label{eq: eta infty}
    \eta_\infty=2C^{-\frac{1}{2}}\left((C+r^4)^{\frac{1}{2}}(\sigma_1^2+\sigma_2^2)+\frac{r^2}{(C+r^4)^{\frac{1}{2}}}(\textrm{d}r^2+r^2\sigma_3^2)\right).
\end{equation}
Comparing \eqref{eq: eta infty} to \eqref{eq: eh metric in r} we see $\eta$ converges on compact subsets to an Eguchi--Hanson metric as $\beta_1\to 1$. Summarizing discussions above, we have shown the following result. It is a special case of Theorem \ref{thm: metric limit nk case} and provides a new way of understanding Eguchi--Hanson metrics.

\begin{thm}\label{thm: limit n2 case}
Fix an arbitrary base point $p$ on the zero section. The K\"{a}hler--Einstein edge metric $(\mathbb{F}_2, \eta_{\beta_1}, p)$ on $\mathbb{F}_2$ converges in the pointed Gromov--Hausdorff sense 
to the following Eguchi--Hanson metric $(-2H_{\mathbb{P}^1}, \eta_\infty), p$ on $-2H_{\mathbb{P}^1}$ as $\beta_1\to 1$:
\begin{equation*}
    \eta_\infty=2C^{-\frac{1}{2}}\left((C+r^4)^{\frac{1}{2}}(\sigma_1^2+\sigma_2^2)+\frac{r^2}{(C+r^4)^{\frac{1}{2}}}(\textrm{d}r^2+r^2\sigma_3^2)\right),
\end{equation*}
where $C>0$ is a constant.
\end{thm}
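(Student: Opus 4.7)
The plan is to follow the Calabi-ansatz framework and exploit the fact that when $n=k=2$ the ODE for $\varphi(\tau)$ reduces to a cubic, so all the ingredients needed for the limit can be read off in closed form. Most of the technical work has already been carried out in the material leading up to the theorem: the asymptotic analysis of $T$ and $\beta_2$ in terms of $\beta_1$, the Hopf-coordinate rewriting of $\eta_{\beta_1}$ in \eqref{eq: eta in limit}, the divergence of fiber length in Proposition \ref{prop: length to infty n2 case}, and the identification of the pointwise limit $\eta_\infty$ in \eqref{eq: eta infty}. What remains is to assemble these pieces and upgrade the on-compacta convergence to pointed Gromov--Hausdorff convergence.

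First I would record the $C^k_{\mathrm{loc}}$ convergence of the metric tensors. The coefficients $\tau_{\beta_1}(s)$ and $\varphi_{\beta_1}(s)$ depend continuously on the parameter $\beta_1\in(0,1)$ through the ODE system of Proposition \ref{prop: red to ode}. Passing to the limit $\beta_1\nearrow 1$ inside \eqref{eq:varphi and tau} yields $\varphi_\infty(\tau)=(\tau^2-1)/(2\tau)$, which is defined for all $\tau\ge 1$; this matches the fact that $T\to\infty$ proved above. On any relatively compact subset $K\subset -2H_{\mathbb{P}^1}$ (viewed inside $\mathbb{F}_2$ via the embedding that excludes $Z_{-2}$), the coordinate $\tau$ stays bounded, so the expression \eqref{eq: eta origin expression} converges smoothly on $K$ to the limit displayed in \eqref{eq:convergence of eta}.

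Next I would identify this limit with an Eguchi--Hanson metric. The Ricci curvature of $\eta_{\beta_1}$ is $\lambda_{\beta_1}=1-\beta_1\to 0$, so $\eta_\infty$ is Ricci-flat on the smooth locus. Using the Ricci formula \eqref{eq: ric tensor eta}, Ricci-flatness forces $\tau_\infty\varphi_\infty=C|w|^2(1+|z|^2)^2=Cr^4$; combined with $\varphi_\infty=(\tau_\infty^2-1)/(2\tau_\infty)$ this gives $\tau_\infty=C^{-1/2}(C+r^4)^{1/2}$, the formula \eqref{eq: limiting tau}. Substituting back into the Hopf-coordinate expression \eqref{eq: eta in limit} yields \eqref{eq: eta infty}, which is exactly the Eguchi--Hanson metric \eqref{eq: eh metric in r} with $\epsilon^4=C$ (up to the overall scaling factor $2C^{-1/2}$). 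Since $\eta_\infty$ is smooth on all of $-2H_{\mathbb{P}^1}$ with the edge singularity of $\eta_{\beta_1}$ along $Z_2$ absorbed into the blow-up, this is consistent with the Eguchi--Hanson space being a resolution of $\mathbb{C}^2/\mathbb{Z}_2$.

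The main obstacle, and the step I would treat most carefully, is the upgrade from smooth convergence on compact sets to pointed Gromov--Hausdorff convergence. Here the key input is Proposition \ref{prop: length to infty n2 case}: the distance from a fixed base point $p$ on $Z_2$ to the divisor $Z_{-2}$ diverges as $\beta_1\nearrow 1$. Thus for every $R>0$, once $\beta_1$ is close enough to $1$ the metric ball $B_{\eta_{\beta_1}}(p,R)$ is contained in a fixed compact subset of $\mathbb{F}_2\setminus Z_{-2}\cong -2H_{\mathbb{P}^1}$ on which the smooth convergence applies. A standard argument (essentially that smooth convergence of Riemannian tensors on geodesically relatively complete neighborhoods of a base point implies pointed Gromov--Hausdorff convergence) then yields $(\mathbb{F}_2,\eta_{\beta_1},p)\to (-2H_{\mathbb{P}^1},\eta_\infty,p)$ in the pointed Gromov--Hausdorff sense. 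One should check that the balls $B_{\eta_\infty}(p,R)$ in the limit are indeed pre-compact in $-2H_{\mathbb{P}^1}$, which is immediate since $\eta_\infty$ is complete (it extends across the exceptional divisor by the discussion in Appendix \ref{app: review EH}, and is asymptotically Euclidean at infinity).
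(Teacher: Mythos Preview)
Your proposal is correct and follows essentially the same approach as the paper: establish $C^k$ convergence of $\eta_{\beta_1}$ to the Eguchi--Hanson form \eqref{eq: eta infty} on compact subsets via the explicit ODE limit and the Ricci-flatness computation \eqref{eq: limiting tau}, invoke Proposition \ref{prop: length to infty n2 case} to push $Z_{-2}$ off to infinity, and then pass to pointed Gromov--Hausdorff convergence on geodesic balls centered at $p\in Z_2$. The paper's own proof is in fact much terser than yours, since all of this machinery is laid out in the preceding discussion of Appendix \ref{app: eh another discussion}; your write-up simply makes the logical dependencies more explicit.
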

\begin{proof}
The convergence on any compact subset of $-2H_{\mathbb{P}^1}$ of such $\eta$ to an Eguchi--Hanson metric in $C^k$-norm for every $k$ can be seen from \eqref{eq: eta infty}. Combining this fact, the fact that $Z_{-2}$ gets pushed-off to infinity as $\beta_1\to1$ and choosing an arbitrary base point from the exceptional divisor $Z_2$, we obtain the convergence in the pointed Gromov--Hausdorff sense by considering convergence of $\eta$ to an Eguchi--Hanson metric on compact geodesic balls centered at the base point.
\end{proof}

\section{Examples of limit of \KE edge metrics}\label{app: more eg}
In this section, we fix $k=1$ and $n=2$. Then we follow Section \ref{sec: asym of kee} and Section \ref{sec: gh kee} to give more concrete examples as limit of \KE edge metrics. In the previous work, we treated the case $\beta_1\searrow 0$ and $\beta_1 \to 1$ \cite{RZ21}. In this section, we consider the several cases: $\beta_1\nearrow 2$, $\beta_2 \nearrow 1$ with no rescaling and $\beta_2\nearrow 1$ with rescaling. The asymptotic behaviors in such cases are summarized in Table \ref{summary}.


Under the assumption $k=1$ and $n=2$, $\beta_1$ ranges from $(0, 2)$. $\tau$ ranges from $[1, T]$. We will study the limiting behaviors of \KE edge metrics when $\beta_1\to 2$. 

By \eqref{eq: varphi general expression nk case} $\varphi(\tau)$ satisfies
\begin{align*}
    \varphi(\tau)&=\frac{\tau^2-1}{\tau}+\frac{1}{3}(\beta_1-2)(\tau^3-1)\\
    &=\frac{1}{3}(\beta_1-2)(\tau-1)(\tau-\alpha_1)(
    \tau-T)/\tau,
\end{align*}
where $T$ and $\alpha_1$ satisfy \cite[(5.1), (5.2)]{RZ21}
\begin{align}
\begin{aligned}\label{eq: T a1 simcase}
    T&=1+3\frac{\sqrt{1+\frac{2}{3}\beta_1-\frac{1}{3}\beta_1^2}+\beta_1-1}{4-2\beta_1},\\
    \alpha_1&=1+3\frac{-\sqrt{1+\frac{2}{3}\beta_1-\frac{1}{3}\beta_1^2}+\beta_1-1}{4-2\beta_1}.
\end{aligned}
\end{align}
Obviously $T\to +\infty$ and $\alpha_1\to-1$ as $\beta_1\to2$. $\beta_2$ is given by \cite[(4.20)]{RZ21}
\begin{equation*}
    \beta_2=\frac{\beta_1-3+3\sqrt{1+\frac{2}{3}\beta_1-\frac{1}{3}\beta_1^2}}{2}.
\end{equation*}
Thus $\beta_2\to 1$ as $\beta_1\to 2$. More precisely, we have the following asymptotic behavior of $\beta_2(\beta_1)$:
\begin{lem}
For $\beta_1 < 2$ and close to $2$, $\beta_2(\beta_1)=\frac{1}{2}\beta_1+o(1)$.
\end{lem}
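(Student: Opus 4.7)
The plan is to use the explicit closed form for $\beta_2$ in terms of $\beta_1$ that is recalled just above the statement, namely
\[
\beta_2(\beta_1)=\frac{\beta_1-3+3\sqrt{1+\tfrac{2}{3}\beta_1-\tfrac{1}{3}\beta_1^2}}{2},
\]
and simply expand the difference $\beta_2(\beta_1)-\tfrac{1}{2}\beta_1$ around $\beta_1=2$. First I would compute directly that
\[
\beta_2(\beta_1)-\tfrac{1}{2}\beta_1=\tfrac{3}{2}\bigl(\sqrt{1+\tfrac{2}{3}\beta_1-\tfrac{1}{3}\beta_1^2}-1\bigr),
\]
which is the cancellation that makes the statement work. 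Next, I would evaluate the radicand $q(\beta_1):=1+\tfrac{2}{3}\beta_1-\tfrac{1}{3}\beta_1^2$ at $\beta_1=2$, noting that $q(2)=1$, so $q$ is continuous with value $1$ at the endpoint and stays positive on a left neighborhood of $2$; this already gives $\sqrt{q(\beta_1)}\to 1$ as $\beta_1\nearrow 2$, hence $\beta_2(\beta_1)-\tfrac{1}{2}\beta_1=o(1)$, as claimed.

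Since this is literally a one-line substitution, there is no substantial obstacle. In fact, the same computation yields the sharper statement $\beta_2(\beta_1)=1+O((\beta_1-2)^2)$ via the expansion $q(\beta_1)=1-\tfrac{2}{3}(\beta_1-2)+O((\beta_1-2)^2)$, so that $\sqrt{q(\beta_1)}=1-\tfrac{1}{3}(\beta_1-2)+O((\beta_1-2)^2)$ and the linear-in-$(\beta_1-2)$ term cancels with $\tfrac{1}{2}(\beta_1-2)=\tfrac{1}{2}\beta_1-1$; but for the purpose of the stated lemma only the continuity of $\sqrt{q}$ at $\beta_1=2$ is needed. The only thing to double-check is that the formula for $\beta_2(\beta_1)$ used here is the correct one in the convention $k=1$, $n=2$ of this appendix, which matches the recalled formula from \cite[(4.20)]{RZ21}.
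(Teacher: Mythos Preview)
Your proposal is correct. The paper states this lemma without proof, presumably because it follows immediately from the displayed closed form for $\beta_2(\beta_1)$; your direct substitution and the observation that the radicand equals $1$ at $\beta_1=2$ is exactly the intended one-line argument.
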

The length of the path on each fiber between the intersection point of the fiber with $Z_1$ and that of the fiber with $Z_{-1}$ is given by the integration of $\varphi(\tau)$ from $1$ to $T$. A similar calculation as in the proof of Proposition \ref{prop: length to infty n2 case} shows the following result.
\begin{prop}\label{prop: length infty n1 case}
The length of the path on each fiber between the intersection point of the fiber with $Z_1$ and that of the fiber with $Z_{-1}$ tends to infinity as $\beta_1\to 2$.
\end{prop}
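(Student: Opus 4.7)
The plan is to mimic the proof of Proposition \ref{prop: length to infty n2 case} almost verbatim, exploiting the explicit cubic form of $\varphi(\tau)$ available in the $n=2$, $k=1$ setting together with the explicit formulas \eqref{eq: T a1 simcase} for $T$ and $\alpha_1$. Restricting $\eta_{\beta_1}$ to the fiber $\{z=0\}$, one again gets a metric of the form $g=\tfrac{1}{2\varphi(\tau)}d\tau^2+2\varphi(\tau)d\theta^2$, so up to a universal constant the fiber distance between $Z_1=\{\tau=1\}$ and $Z_{-1}=\{\tau=T\}$ equals
\[
L(\beta_1):=\int_1^T\frac{d\tau}{\sqrt{\varphi(\tau)}}=\int_1^T\frac{\sqrt{\tau}\,d\tau}{\sqrt{\tfrac{1}{3}(2-\beta_1)(\tau-1)(\tau-\alpha_1)(T-\tau)}}.
\]
After the substitution $\xi=\tau-1$ this becomes
\[
L(\beta_1)=\frac{1}{\sqrt{(2-\beta_1)/3}}\int_0^{T-1}\frac{\sqrt{\xi+1}\,d\xi}{\sqrt{\xi(\xi+1-\alpha_1)(T-1-\xi)}}.
\]

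Next I would check integrability of the integrand, which behaves as $1/\sqrt{\xi}$ near $\xi=0$ (since $\alpha_1\to-1$ keeps $\xi+1-\alpha_1$ bounded away from $0$) and as $1/\sqrt{T-1-\xi}$ near $\xi=T-1$, so both endpoints contribute finite and uniformly controlled pieces. The divergence will come not from an endpoint singularity but from the fact that the interval of integration itself becomes unbounded.

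For the key lower bound I would, as in Proposition \ref{prop: length to infty n2 case}, work on an interval $(T-1-\epsilon,T-1)$. From \eqref{eq: T a1 simcase} one reads off $T-1\sim 3/(2-\beta_1)$ and $\alpha_1\to -1$, so for $\beta_1$ sufficiently close to $2$ one has $T-1-\epsilon>\tfrac12 T$, giving $\sqrt{\xi+1}\ge\sqrt{T/2}$, while $\sqrt{\xi(\xi+1-\alpha_1)}\le\sqrt{2}\,T$. Combining these with the prefactor $1/\sqrt{(2-\beta_1)/3}\sim\sqrt{T}$ gives
\[
\int_{T-1-\epsilon}^{T-1} \frac{\sqrt{\xi+1}\,d\xi}{\sqrt{\tfrac{1}{3}(2-\beta_1)\,\xi(\xi+1-\alpha_1)(T-1-\xi)}}\ge C\cdot\sqrt{T}\cdot\frac{\sqrt{T/2}}{\sqrt{2}\,T}\int_{T-1-\epsilon}^{T-1}\frac{d\xi}{\sqrt{T-1-\xi}}\ge C'\sqrt{\epsilon}
\]
for a uniform $C'>0$ and all $\beta_1$ close enough to $2$ that $\epsilon<T/2$.

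The final step is simply to observe that since $T\to\infty$ as $\beta_1\nearrow 2$, $\epsilon$ can be taken arbitrarily large, so $L(\beta_1)\ge C'\sqrt{\epsilon}\to\infty$. I do not anticipate any real obstacle; the one small bookkeeping point is tracking the $(2-\beta_1)$ powers correctly so that the $1/\sqrt{2-\beta_1}$ prefactor exactly cancels the $1/\sqrt{T}$ coming from the ratio of upper and lower bounds for $\sqrt{\xi+1}$ and $\sqrt{\xi(\xi+1-\alpha_1)}$, leaving the $\sqrt{\epsilon}$ factor as the source of divergence. Everything else is a near-verbatim transcription of the earlier argument.
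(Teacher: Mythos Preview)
Your proposal is correct and is precisely the approach the paper intends: the paper states only that ``a similar calculation as in the proof of Proposition~\ref{prop: length to infty n2 case} shows the following result,'' and your argument is exactly that calculation carried out with the $n=2$, $k=1$ constants (so $\varphi(\tau)=\tfrac{1}{3}(2-\beta_1)(\tau-1)(\tau-\alpha_1)(T-\tau)/\tau$, $T-1\sim 3/(2-\beta_1)$, $\alpha_1\to -1$). The bookkeeping you flag---that the prefactor $1/\sqrt{(2-\beta_1)/3}\sim\sqrt{T}$ cancels the $1/\sqrt{T}$ from the ratio of bounds, leaving $C'\sqrt{\epsilon}$---is handled correctly.
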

Recall we assume K\"{a}hler--Einstein edge metrics on $\mathbb{F}_1$ have the form as in \eqref{eq: eta origin expression}. The Ricci curvature form of such \KE edge metrics, denoted by $\eta$, is given by \eqref{eq: ric tensor eta}. From now on, we denote by $\eta_{\beta_1}$, $\tau_{\beta_1}$ and $\varphi_{\beta_1}$ to emphasize the dependence of metrics and coordinates on $\beta_1$. Let us consider on any compact subsets of $\mathbb{F}_1$,
\begin{align*}
    \lim_{\beta_1\to 2}\eta_{\beta_1}&=\lim_{\beta_1\to 2}\tau\frac{\sqrt{-1}\textrm{d}z\wedge\textrm{d}\bar{z}}{(1+|z|^2)^2}+\left(\frac{\tau^2-1}{\tau}+\frac{1}{3}(\beta_1-2)(\tau^3-1)\right)\left(\frac{\sqrt{-1}\textrm{d}w\wedge\textrm{d}\bar{w}}{|w|^2}\right.\\
    &\left.+\sqrt{-1}\alpha\wedge\bar{\alpha}+\sqrt{-1}\alpha\wedge\frac{\textrm{d}\bar{w}}{\bar{w}}+\sqrt{-1}\frac{\textrm{d}w}{w}\wedge\bar{\alpha}\right)\\
    &=\tau\frac{\sqrt{-1}\textrm{d}z\wedge\textrm{d}\bar{z}}{(1+|z|^2)^2}+\frac{\tau^2-1}{\tau}\left(\frac{\sqrt{-1}\textrm{d}w\wedge\textrm{d}\bar{w}}{|w|^2}+\sqrt{-1}\alpha\wedge\bar{\alpha}+\sqrt{-1}\alpha\wedge\frac{\textrm{d}\bar{w}}{\bar{w}}\right.\\
    &\left.+\sqrt{-1}\frac{\textrm{d}w}{w}\wedge\bar{\alpha}\right).\\
    &=:\eta_\infty
\end{align*}
The Ricci curvature $\lambda_{\beta_1}$ of $\eta_{\beta_1}$ is given by $\lambda_{\beta_1}=2-\beta_1$. As $\beta_1\to2$, the Ricci curvature $\lambda_{\beta_1}$ tends to $0$. Thus, the limit metric $\eta_\infty$ has Ricci curvature $0$. Hence by \eqref{eq: ric tensor eta}, $\tau_\infty$ and $\varphi_\infty$ satisfy
\begin{align}
    &2\frac{\sqrt{-1}\textrm{d}z\wedge\textrm{d}\bar{z}}{(1+|z|^2)^2}-\sqrt{-1}\partial\bar{\partial}\log\tau_{\infty}-\sqrt{-1}\partial\bar{\partial}\log\varphi_{\infty}=0,\notag\\
    \Rightarrow &\tau_\infty \varphi_\infty=C|w|^4(1+|z|^2)^2,\notag\\
    \Rightarrow&\tau_\infty=(1+C|w|^4(1+|z|^2)^2)^{\frac{1}{2}},\quad\textrm{for some constant}\;C>0.\label{eq: tau infty n1 case}
\end{align}
Thus we obtain the following theorem, which is a special case of Theorem \ref{thm: metric limit nk case}.
\begin{thm}\label{thm: limit n2k1 case}
Fix an arbitrary base point $p$ on $Z_1\subset \mathbb{F}_1$. As $\beta_1\to2$, the  K\"{a}hler--Einstein edge metric $(\mathbb{F}_1, \eta_{\beta_1}, p)$ on $\mathbb{F}_1$ converges in the pointed Gromov--Hausdorff sense to a Ricci-flat metric $(-H_{\mathbb{P}^1} , \eta_\infty, p)$ on $-H_{\mathbb{P}^1}$ with conic singularity of angle $4\pi$ along $Z_1$.
\end{thm}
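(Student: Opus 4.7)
The plan is to specialize the argument of Theorem \ref{thm: metric limit nk case} to the case $n=2$, $k=1$, using the explicit formulas for $T$, $\alpha_1$ and $\beta_2$ recorded in \eqref{eq: T a1 simcase} and the identity $\beta_2=(\beta_1-3+3\sqrt{1+\tfrac{2}{3}\beta_1-\tfrac13\beta_1^2})/2$. All of the analytic ingredients have essentially already been laid out in the paragraphs preceding the theorem; the proof is thus a synthesis rather than a new computation.

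First I would record the pointwise smooth limit. Since $T(\beta_1)\to+\infty$ and $\alpha_1(\beta_1)\to-1$ as $\beta_1\nearrow 2$, the polynomial expression for $\varphi_{\beta_1}$ converges on any compact set in $\tau\in[1,\infty)$ to the Ricci-flat limit $\varphi_\infty(\tau)=(\tau^2-1)/\tau$, while the defining ODE $\mathrm{d}s/\mathrm{d}\tau=1/\varphi_{\beta_1}(\tau)$ depends smoothly on $\beta_1$, so $\tau_{\beta_1}(s)$ converges smoothly on compact sets in $s$. Substituting into \eqref{eq: eta origin expression} gives the smooth local convergence $\eta_{\beta_1}\to\eta_\infty$ already displayed before the theorem. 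The explicit form of $\tau_\infty$ in \eqref{eq: tau infty n1 case}, together with the fact that $\tau_\infty^2-1=C|w|^4(1+|z|^2)^2/\tau_\infty\cdot\tau_\infty^{-1}\cdot(\tau_\infty+1)(\tau_\infty-1)$, allows one to check directly that $\eta_\infty$ extends as a smooth Kähler metric on the total space of $-H_{\mathbb{P}^1}$ minus the zero section, and to identify this total space as the natural smooth model containing the limiting geometry.

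Next I would pin down the singular behavior and the Ricci-flatness. Ricci-flatness follows from the fact that the Ricci curvature of $\eta_{\beta_1}$ is $2-\beta_1\to 0$ and from equation \eqref{eq: ric tensor eta}, which forces $\tau_\infty\varphi_\infty=C|w|^4(1+|z|^2)^2$; this is exactly the derivation \eqref{eq: tau infty n1 case}. The edge angle along $Z_1=\{\tau=1\}$ is $2\pi\beta_1$ by Proposition \ref{prop: bd cond b1 case}, and taking $\beta_1\nearrow 2$ gives the claimed total angle $4\pi$ along $Z_1$. The other divisor $Z_{-1}$ disappears from the limit because of Proposition \ref{prop: length infty n1 case}, which shows that the fiberwise distance from $Z_1$ to $Z_{-1}$ diverges.

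Finally, to upgrade local smooth convergence to pointed Gromov--Hausdorff convergence based at $p\in Z_1$, I would argue as in the proof of Theorem \ref{thm: metric limit nk case}: for every $R>0$ the closed $\eta_{\beta_1}$-geodesic ball $\overline{B}_R(p)$ is contained, for $\beta_1$ sufficiently close to $2$, in a fixed compact subset of $\mathbb{F}_1\setminus Z_{-1}$ (by Proposition \ref{prop: length infty n1 case}), and on such compact sets the local smooth convergence yields uniform Gromov--Hausdorff approximations of $\overline{B}_R(p)$ by the corresponding balls in $(-H_{\mathbb{P}^1},\eta_\infty,p)$. The main obstacle, and the only point requiring some care, is this last step: one must verify that distances computed in $\eta_{\beta_1}$ on a fixed compact neighborhood of $p$ converge to those computed in $\eta_\infty$, and that no geodesic from $p$ of bounded length can escape toward $Z_{-1}$; both follow from the uniform smooth convergence on compact sets combined with the divergence of the fiber length established in Proposition \ref{prop: length infty n1 case}.
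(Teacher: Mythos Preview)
Your proposal is correct and follows essentially the same route as the paper: specialize the general argument of Theorem~\ref{thm: metric limit nk case} to $n=2$, $k=1$, use \eqref{eq: tau infty n1 case} to write down the explicit Ricci-flat limit $\eta_\infty$ with a $4\pi$ cone along $Z_1$, invoke Proposition~\ref{prop: length infty n1 case} to push $Z_{-1}$ to infinity, and then upgrade local smooth convergence to pointed Gromov--Hausdorff convergence exactly as in Theorem~\ref{thm: limit n2 case}. The only blemish is the garbled identity you wrote for $\tau_\infty^2-1$, which you should simply delete since \eqref{eq: tau infty n1 case} already gives $\tau_\infty$ explicitly.
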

\begin{proof}
By \eqref{eq: tau infty n1 case}, the limit metric $\eta_\infty$ has the form
\begin{align*}
    \eta_\infty=&(1+C|w|^4(1+|z|^2)^2)^{\frac{1}{2}}\frac{\sqrt{-1}\textrm{d}z\wedge\textrm{d}\bar{z}}{(1+|z|^2)^2}\\
    &+\frac{C|w|^4(1+|z|^2)^2}{(1+C|w|^4(1+|z|^2)^2)^{\frac{1}{2}}}\left(\frac{\sqrt{-1}\textrm{d}w\wedge\textrm{d}\bar{w}}{|w|^2}+\sqrt{-1}\alpha\wedge\bar{\alpha}+\sqrt{-1}\alpha\wedge\frac{\textrm{d}\bar{w}}{\bar{w}}\right.\\
    &\left.+\sqrt{-1}\frac{\textrm{d}w}{w}\wedge\bar{\alpha}\right),
\end{align*}
for some constant $C>0$. Thus $\eta_\infty$ has edge singularity of angle $4\pi$ along $Z_1$. For a fixed base point on $Z_1$, Proposition \ref{prop: length infty n1 case} shows that $Z_{-1}$ gets pushed-off to infinity in the limit. The remaining proof is the same as that of Theorem \ref{thm: limit n2 case}.
\end{proof}
\subsection{Calculations in terms of $\beta_2$}
In this section, we choose a base point from the infinity section and then study the limiti behavior of the \KE edge metrics. In the language of Section \ref{sec: asym of kee}, we will consider the \KE edge metrics that are parametrized by $\beta_2$.

As in Section \ref{sec: asym of kee}, we consider $u:=1/w$ as the fiber coordinate. Then $\{u=0\}$ is the infinity section and $\{u=\infty\}$ is the zero section. We still define $s$ as follows:
\begin{equation*}
    s=\log(1+|z|^2)-\log|u|^2.
\end{equation*}
Then as before $\{s=-\infty\}$ still corresponds to the zero section while $\{s=+\infty\}$ corresponds to the infinity section.

Denote now by $\xi$ the \KE edge metric that we seek on $\mathbb{F}_1$. Assume $\xi=\sqrt{-1}\partial\bar{\partial} f(s)$ for some smooth function $f(s)$. As \eqref{eq: xi metric} we calculate 
\begin{equation*}
    \eta=\sqrt{-1}\partial\bar{\partial}f(s)=\tau\pi_1^*\omega_{\operatorname{FS}}+\varphi\left(\frac{\sqrt{-1}\textrm{d}u\wedge\textrm{d}\bar{u}}{|u|^2}+\alpha\wedge\bar{\alpha}-\alpha\wedge\frac{\textrm{d}\bar{u}}{\bar{u}}-\frac{\textrm{d}u}{u}\wedge\bar{\alpha}\right),
\end{equation*}
where $\alpha:=\bar{z}\textrm{d}z/1+|z|^2$, $\tau=f'(s)$ and $\varphi=f''(s)$ as before. As in Section \ref{sec: asym of kee}, after a renormalization of the metric we may assume
\begin{equation*}
    \sup f'(s)=1.
\end{equation*}
We also assume $\inf f'(s)=t$, for some $t>0$. In other words, $\tau$ ranges from $[t, 1]$. 

Now we calculate the Ricci curvature form $\eta$ using coordinates $u$ and $z$:
\begin{align*}
    \operatorname{Ric}\eta&=-\sqrt{-1}\partial\bar{\partial}\log\eta^2\\
    &=-\sqrt{-1}\partial\bar{\partial}\log \frac{\tau\varphi}{|u|^2(1+|z|^2)^2}\\
    &=(1-\beta_1)[Z_1]+(1-\beta_2)[Z_{-1}]+2\pi^*_1 \omega_{\operatorname{FS}}-\sqrt{-1}\partial\bar{\partial}\log\tau\\
    &-\sqrt{-1}\partial\bar{\partial}\log\varphi.
\end{align*}
Denote the Ricci curvature by $\mu$. The \KE edge equation
\begin{equation*}
    \operatorname{Ric}\eta=\mu \eta+(1-\beta_1)[Z_1]+(1-\beta_2)[Z_{-1}]
\end{equation*}
is equivalent to
\begin{equation}\label{eq: ode in b2 case}
    2-\varphi_{\tau}-\varphi/\tau=\mu\tau,\quad\tau\in[t, 1].
\end{equation}
Note that \eqref{eq: ode in b2 case} gives us the same ODE derived in \cite[(4.12)]{RZ21}.
Now let us determine boundary conditions satisfied by $\varphi(\tau)$. The same arguments in \cite[Proposition 3.3]{RZ21} give us that
\begin{equation}\label{eq: bd cond n1k1b2}
    \varphi(t)=\varphi(1)=0,\quad \varphi'(t)=\beta_1,\quad \varphi'(1)=-\beta_2.
\end{equation}
Plugging boundary conditions in \eqref{eq: ode in b2 case} implies that
\begin{equation*}
    \mu=2+\beta_2.
\end{equation*}
The solution to \eqref{eq: ode in b2 case} is
\begin{equation}\label{eq: sol n1k1b2}
    \varphi(\tau)=\frac{\tau^2-1}{\tau}+\frac{2+\beta_2}{3}\frac{1-\tau^3}{\tau}.
\end{equation}
Combining \eqref{eq: bd cond n1k1b2} and \eqref{eq: sol n1k1b2}, we obtain the dependence of $t$ and $\beta_1$ on $\beta_2$:
\begin{align}
\begin{aligned}\label{eq: tb1 simp case}
    t&=\frac{1-\beta_2+\sqrt{(\beta_2-1)(-3\beta_2-9)}}{2(2+\beta_2)},\\
    \beta_1&=\frac{3}{2}+\frac{1}{2}\beta_2-\frac{1}{2}\sqrt{(1-\beta_2)(3\beta_2+9)}.
\end{aligned}
\end{align}
Next, inspired by the result in Theorem \ref{thm: limit n2k1 case}, we study the limiting behavior of \KE edge metrics when $\beta_2$ tends to $1$. 

\begin{prop}\label{prop: length finite k1 case}
The length of the path on each fiber between the intersection point of the fiber with $Z_1$ and that of the fiber with $Z_{-1}$ converges to a finite number as $\beta_2\to 1$.
\end{prop}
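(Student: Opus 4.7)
The plan is to express the fiber length as a definite integral and deploy a dominated convergence argument after decomposing the domain. By the calculation leading to \eqref{eq: metric res}, the restriction of $\xi_{\beta_2}$ to a fiber $\{z=0\}$ has the form $\tfrac{1}{2\varphi(\tau)}\,d\tau^{2}+2\varphi(\tau)\,d\theta^{2}$, so the length between $Z_1\cap\{z=0\}$ and $Z_{-1}\cap\{z=0\}$ is, up to a numerical constant, $L(\beta_{2})=\int_{t}^{1}d\tau/\sqrt{\varphi(\tau)}$. Using \eqref{eq: sol n1k1b2}, one factors $\varphi(\tau)=\tfrac{(2+\beta_{2})(1-\tau)(\tau-t)(\tau-\alpha_{2})}{3\tau}$, with third real root $\alpha_{2}$ governed by the Vieta relations $t+\alpha_{2}=\tfrac{1-\beta_{2}}{2+\beta_{2}}$ and $t\alpha_{2}=\tfrac{\beta_{2}-1}{2+\beta_{2}}$; since this product is negative, $\alpha_{2}<0<t$. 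From \eqref{eq: tb1 simp case}, $t\sim\sqrt{(1-\beta_{2})/3}$ and hence $\alpha_{2}\sim-\sqrt{(1-\beta_{2})/3}$ as $\beta_{2}\nearrow 1$, so both $t$ and $|\alpha_{2}|$ collapse to $0$ at rate $\sqrt{1-\beta_{2}}$, and $\varphi_{\beta_{2}}(\tau)\to\tau(1-\tau)$ pointwise on $(0,1)$.

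The second step is to split $L(\beta_{2})=\int_{t}^{2t}d\tau/\sqrt{\varphi(\tau)}+\int_{2t}^{1}d\tau/\sqrt{\varphi(\tau)}$ and dispose of the first piece by a crude bound. On $[t,2t]$, using $\tau\leq 2t$, $(\tau-\alpha_{2})\geq t-\alpha_{2}$, and $1-\tau\geq 1/2$, one obtains $\varphi(\tau)\geq c(\tau-t)(t-\alpha_{2})/t$; then $\int_{t}^{2t}d\tau/\sqrt{\varphi(\tau)}\leq C\sqrt{t/(t-\alpha_{2})}\cdot\sqrt{t}=O((1-\beta_{2})^{1/4})\to 0$. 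Here the crucial cancellation is that $t\sim\sqrt{1-\beta_{2}}$ and $t-\alpha_{2}\sim\sqrt{1-\beta_{2}}$ are of the same order, so $\sqrt{t/(t-\alpha_{2})}$ stays bounded.

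The main step is to produce a uniform integrable majorant on $[2t,1]$. Rewrite $\varphi_{\beta_{2}}(\tau)=\tfrac{(2+\beta_{2})\tau(1-\tau)}{3}\cdot\left(1-\tfrac{(1-\beta_{2})(\tau+1)}{(2+\beta_{2})\tau^{2}}\right)$. The identity $\varphi(t)=0$ reads $t^{2}=\tfrac{(1-\beta_{2})(t+1)}{2+\beta_{2}}$; for $\tau\geq 2t$ this gives $\tau^{2}\geq\tfrac{4(1-\beta_{2})(t+1)}{2+\beta_{2}}$, so the factor in parentheses is at least $1-\tfrac{\tau+1}{4(t+1)}\geq 1/2$ (since $\tau+1\leq 2\leq 2(t+1)$). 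Consequently $1/\sqrt{\varphi_{\beta_{2}}(\tau)}\leq\sqrt{3/(\tau(1-\tau))}$ uniformly on $[2t,1]$, a bound that is integrable on $(0,1)$. Dominated convergence then yields $\lim_{\beta_{2}\nearrow 1}\int_{2t}^{1}d\tau/\sqrt{\varphi(\tau)}=\int_{0}^{1}d\tau/\sqrt{\tau(1-\tau)}=\pi$, and combining with the vanishing of the first piece shows $L(\beta_{2})$ tends to a finite limit, as claimed.

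The main obstacle will be the uniform domination on $[2t,1]$: the simultaneous collapse of both roots $t$ and $\alpha_{2}$ to $0$ rules out any bound of the form $\varphi\geq c\,\tau(1-\tau)$ on the whole interval $[t,1]$. The split at $\tau=2t$ is therefore essential, and the lower bound on $[2t,1]$ depends crucially on using $\varphi(t)=0$ to absorb the $(1-\beta_{2})$ factor appearing in the correction term—a cancellation that is special to the conformal scale at which the two roots degenerate.
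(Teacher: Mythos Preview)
Your proof is correct and takes a genuinely different—and more informative—route than the paper's. The paper proceeds by an endpoint analysis: it writes the length as $\int_t^1 \sqrt{\tau}\,d\tau\big/\big(\sqrt{(2+\beta_2)/3}\,\sqrt{(1-\tau)(\tau-\alpha_1)(\tau-t)}\big)$ and argues separately that the integrand is controlled by $1/\sqrt{1-\tau}$ near $\tau=1$ (where the remaining factors are uniformly bounded) and by $1/\sqrt{\tau-t}$ near $\tau=t$ (where $\sqrt{\tau}/\sqrt{\tau-\alpha_1}$ stays bounded because $t$ and $t-\alpha_1$ are of the same order as $\beta_2\to 1$). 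This yields uniform boundedness of the length but does not identify the limit, and the treatment of the moving middle region is left implicit.

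Your split at the moving point $\tau=2t$, together with the identity $t^{2}=(1-\beta_{2})(t+1)/(2+\beta_{2})$ used to control the correction factor on $[2t,1]$, produces a clean integrable majorant $\sqrt{3/(\tau(1-\tau))}$ valid uniformly in $\beta_{2}$ and hence gives, via dominated convergence, the exact limiting value $\int_{0}^{1}d\tau/\sqrt{\tau(1-\tau)}=\pi$. The extra precision comes essentially for free, and your argument also makes transparent why no lower bound of the form $\varphi\geq c\,\tau(1-\tau)$ can hold on all of $[t,1]$: the simultaneous collapse of the roots $t$ and $\alpha_{2}$ to $0$ is precisely what forces the split.
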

\begin{proof}
As shown in Proposition \ref{prop: length to infty n2 case}, when restricted to the fiber $\{z=0\}$, the distance between $\{\tau=t\}$ and $\{\tau=1\}$ is given by
\begin{equation}\label{eq: integrad n2k1 b2 case}
    \int_t^1 \frac{1}{\sqrt{\varphi(\tau)}}\;\textrm{d}\tau=\int_t^1 \frac{\sqrt{\tau}}{\sqrt{(2+\beta_2)/3}\sqrt{(1-\tau)(\tau-\alpha_1)(\tau-t)}}\;\textrm{d}\tau.
\end{equation}
It remains to show the integral in \eqref{eq: integrad n2k1 b2 case} converges uniformly as $\beta_2\to 1$. Near $\tau=1$, $\sqrt{\tau}/(\tau-\alpha_1)(\tau-t)$ in \eqref{eq: integrad n2k1 b2 case} is uniformly bounded as $\beta_2\to 1$. Thus for $\epsilon>0$,
\begin{align*}
    &\int_{1-\epsilon}^1 \frac{\sqrt{\tau}}{\sqrt{(2+\beta_2)/3}\sqrt{(1-\tau)(\tau-\alpha_1)(\tau-t)}}\;\textrm{d}\tau\\
    &\leq C\int_{1-\epsilon}^1 \frac{1}{\sqrt{1-\tau}}\;\textrm{d}\tau<\infty
\end{align*}
for some uniform constant $C>0$. In other words, the integral \eqref{eq: integrad n2k1 b2 case} does not blow up near $\tau=1$. It remains to study \eqref{eq: integrad n2k1 b2 case} near $\tau=t$. We consider a change of coordinate $\xi:=\tau-t$. Then for $\epsilon>0$,
\begin{equation*}
    \int_t^{t+\epsilon} \frac{\sqrt{\tau}}{\sqrt{(2+\beta_2)/3}\sqrt{(1-\tau)(\tau-\alpha_1)(\tau-t)}}\;\textrm{d}\tau\leq C\int_0^\epsilon \frac{\sqrt{\xi+t}}{\sqrt{\xi(\xi+t-\alpha_1)}},
\end{equation*}
for some uniform constant $C>0$. Since
\begin{equation*}
    \lim_{\xi\to 0}\frac{\sqrt{\xi+t}}{\xi+t-\alpha_1}=\frac{\sqrt{t}}{\sqrt{t-\alpha_1}}\leq C,
\end{equation*}
for some uniform constant $C>0$ when $\beta_2\to 1$, we conclude that the integral \eqref{eq: integrad n2k1 b2 case} converges as $\int_0^\epsilon 1/\sqrt{\xi}$ near $\tau=t$. Hence we have finished the proof. 
\end{proof}

From now on, we denote by $\xi_{\beta_2}$, $\tau_{\beta_2}$ and $\varphi_{\beta_2}$ to indicate the dependence of metrics and coordinates on $\beta_2$.

\begin{thm}\label{thm: appthm2}
Fix an arbitrary base point $p$ on $Z_{-1}\subset \mathbb{F}_1$. As $\beta_2\to 1$, the \KE edge metric $(\mathbb{F}_1, \xi_{{\beta}_2}, p)$ on $\mathbb{F}_1$ converge in the pointed Gromov--Hausdorff sense to the Fubini--Study metric $(\mathbb{P}^2, \omega_{\mathrm{FS}},p)$.
We will show that $\xi_{\beta_2}$ converges pointwise smoothly to a degenerate metric tensor that is the pull-back of the Fubini--Study metric under the blow-up map on $\mathbb{F}_1$.
\end{thm}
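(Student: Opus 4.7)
The plan is to run the $\beta_2 \nearrow 1$ limit of Theorem \ref{thm: conv mod2} in the explicit cubic case $(n,k)=(2,1)$. Observe that by Definition \ref{def_ModOnWps} one has $\omega_{\mathrm{orb},2,1} = \omega_{\mathrm{FS}}$ on $\mathbb{P}^2(1,1,1) = \mathbb{P}^2$, so Theorem \ref{thm: appthm2} is in fact a direct specialization of Theorem \ref{thm: conv mod2}; the point of stating it separately is to exhibit the Fubini--Study metric on the honest smooth projective plane as a large-angle limit and to illustrate the general mechanism in a setting where every formula is explicit.

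First I would take $\beta_2 \nearrow 1$ in the closed-form expressions \eqref{eq: tb1 simp case} and \eqref{eq: sol n1k1b2}, obtaining $t(\beta_2) \to 0$, $\beta_1(\beta_2) \to 2$, and
\begin{equation*}
\varphi_\infty(\tau) := \lim_{\beta_2 \nearrow 1} \varphi_{\beta_2}(\tau) = \tau(1-\tau), \quad \tau \in [0,1].
\end{equation*}
Integrating $\textrm{d}s/\textrm{d}\tau = 1/\varphi_\infty(\tau) = 1/\tau + 1/(1-\tau)$ and normalizing the additive constant by a rescaling $u \mapsto C u$ would produce $\tau_\infty(s) = e^s/(1+e^s)$ and $\varphi_\infty(s) = e^s/(1+e^s)^2$, exactly the functions yielded by the general ODE \eqref{eq_OdeTs} at $k=1$. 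The same smooth pointwise convergence argument used in the proof of Theorem \ref{thm: conv mod2} then gives a smooth limit tensor $\tilde{\xi}_\infty$ on compact subsets of $\mathbb{F}_1 \setminus Z_1$.

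Next I would identify $\tilde{\xi}_\infty$ with the pullback of $\omega_{\mathrm{FS}}$. The potential is $g(s) = -\log(1-\tau_\infty(s)) = \log(1+e^s)$, and since $e^s = (1+|z|^2)/|u|^2$ on the chart near the infinity section, stripping off the pluriharmonic term $-\log|u|^2$ gives
\begin{equation*}
\tilde{\xi}_\infty = \sqrt{-1}\partial\bar{\partial} \log\bigl(1+|z|^2+|u|^2\bigr),
\end{equation*}
which is the pullback under the blow-down $\pi \colon \mathbb{F}_1 \to \mathbb{P}^2$ of Lemma \ref{lem_WpsAndCh} of the Fubini--Study potential on $\mathbb{P}^2$ in a standard affine chart around $[0:0:1]$. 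The vanishings $\tau_\infty \to 0$ and $\varphi_\infty \to 0$ as $s \to -\infty$ confirm that $\tilde{\xi}_\infty$ degenerates precisely along the exceptional divisor $Z_1$, consistent with $\pi$ collapsing $Z_1$ to the point $[0:0:1]$.

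Finally, to upgrade pointwise smooth convergence on $\mathbb{F}_1 \setminus Z_1$ to pointed Gromov--Hausdorff convergence with base point $p \in Z_{-1}$, I would invoke Proposition \ref{prop: length finite k1 case} for uniform boundedness of the fiber length as $\beta_2 \nearrow 1$, and then copy the pointed Gromov--Hausdorff argument from the end of the proof of Theorem \ref{thm: conv mod2}. The only ingredient genuinely new relative to Section \ref{sec: gh kee} is that the exceptional divisor $Z_1$ should collapse to a single point in the limit: this follows from $\sup_{Z_1} \tau_{\beta_2} = t(\beta_2) \to 0$, which forces the $\xi_{\beta_2}$-diameter of $Z_1$ (controlled by $\sqrt{t}$ since the horizontal part of $\xi_{\beta_2}$ is $\tau\,\pi_1^*\omega_{\mathrm{FS}}$) to vanish. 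Verifying this collapse rigorously, with estimates uniform up to the exceptional divisor so that $\mathbb{F}_1$ becomes the \emph{smooth} $\mathbb{P}^2$ in the limit rather than producing a cone-point artifact, is the main technical step.
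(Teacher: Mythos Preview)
Your proposal is correct and follows essentially the same route as the paper: pass to the $\beta_2\nearrow 1$ limit in the ODE to obtain $\tau_\infty=e^s/(1+e^s)$ and $\varphi_\infty=\tau_\infty(1-\tau_\infty)$, identify the limit tensor with $\pi^*\omega_{\mathrm{FS}}$, and then invoke Proposition~\ref{prop: length finite k1 case} for the pointed Gromov--Hausdorff step. Your identification via the potential $g(s)=\log(1+e^s)=\log(1+|u|^2+|z|^2)-\log|u|^2$ is actually cleaner than the paper's version, which instead expands all the metric components in \eqref{eq_XiInfForm} and then recognizes the Fubini--Study form by inspection; and your final paragraph flagging the collapse of $Z_1$ is more careful than the paper, which simply asserts the Gromov--Hausdorff limit from local smooth convergence plus finite fiber length.
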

\begin{proof}
Denote by $\xi_\infty$, $\tau_\infty$ and $\varphi_\infty$ the metric and coordinates in the limit when $\beta_2\to 1$. To find a relation between $\tau_\infty$, $\varphi_\infty$ and $s$, consider the ODE satisfied by $s$ and $\tau_{\beta_2}$:
\begin{equation}\label{eq: ODE n1k1b2}
    \frac{\textrm{d}s}{\textrm{d}\tau_{\beta_2}}=\frac{1}{\varphi_{\beta_2}}=\frac{\tau_{\beta_2}}{(\frac{2+\beta_2}{3})(1-\tau_{\beta_2}^3)+\tau_{\beta_2}^2-1}.
\end{equation}
Letting $\beta_2\to1$ in \eqref{eq: ODE n1k1b2}, we obtain (up to a constant that can be chosen to be $0$)
\begin{equation}\label{eq: ot r1}
    s=\log\frac{\tau_{\infty}}{1-\tau_\infty},\quad\tau_\infty\in(0, 1),
\end{equation}
where $\tau_\infty$ ranges from $(0, 1)$ since $t\to 0$ as $\beta_2\to 1$. Obviously there holds
\begin{equation}\label{eq: ot r2}
    \varphi_\infty=\tau_\infty(1-\tau_\infty).
\end{equation}
Recall $s=\log(1+|z|^2)-\log|u|^2$. Combining \eqref{eq: ot r1} and \eqref{eq: ot r2}, the limit metric $\xi_\infty$ has the form:
\begin{equation}\label{eq_XiInfN2K1}
\begin{aligned}
    \xi_\infty&=\tau_\infty \pi_1^*\omega_{\operatorname{FS}}+\varphi_\infty \left(\pi_2^*\omega_{\operatorname{Cyl}}+\sqrt{-1}\alpha\wedge\bar{\alpha}-\sqrt{-1}\alpha\wedge\frac{\textrm{d}\bar{u}}{\bar{u}}-\sqrt{-1}\frac{\textrm{d}u}{u}\wedge\bar{\alpha}\right)\\
    &=\frac{1+|z|^2}{|u|^2+1+|z|^2}\pi_1^*\omega_{\operatorname{FS}}+\frac{1+|z|^2}{(|u|^2+1+|z|^2)^2}\sqrt{-1}\textrm{d}u\wedge\textrm{d}\bar{u}\\
    &+\frac{|u|^2(1+|z|^2)}{(|u|^2+1+|z|^2)^2}\left(\sqrt{-1}\alpha\wedge\bar{\alpha}-\sqrt{-1}\alpha\wedge\frac{\textrm{d}\bar{u}}{\bar{u}}-\sqrt{-1}\frac{\textrm{d}u}{u}\wedge\bar{\alpha}\right).
\end{aligned}
\end{equation}

Next, we derive the explicit formula for $\xi_\infty$. Recall,
\begin{equation}\label{eq_AlpDefN2K1}
    \alpha = \frac{\bar{z}\textrm{d}z}{1+|z|^2}.
\end{equation}

Plugging \eqref{eq_AlpDefN2K1} in \eqref{eq_XiInfN2K1} and by calculations:
\begin{equation}\label{eq_XiInfForm}
\begin{aligned}
    \xi_\infty &=\frac{1+|z|^2}{|u|^2+1+|z|^2} \cdot \frac{\sqrt{-1}\textrm{d}z\wedge \textrm{d}\bar{z}}{(1+|z|^2)^2} +\frac{1+|z|^2}{(|u|^2+1+|z|^2)^2}\sqrt{-1}\textrm{d}u\wedge\textrm{d}\bar{u}\\
    &+\frac{|u|^2(1+|z|^2)}{(|u|^2+1+|z|^2)^2}\left(\sqrt{-1} \cdot \frac{|z|^2\textrm{d}z\wedge \textrm{d}\bar{z}}{(1+|z|^2)^2}
    -\sqrt{-1}\frac{\bar{z}\textrm{d}z}{1+|z|^2}\wedge\frac{\textrm{d}\bar{u}}{\bar{u}}-\sqrt{-1}\frac{\textrm{d}u}{u}\wedge\frac{{z}\textrm{d}\bar{z}}{1+|z|^2}\right)\\
    & = \sqrt{-1}\left(\frac{1+|u|^2}{(1+|u|^2+|z|^2)^2} \textrm{d}z\wedge \textrm{d}\bar{z} - \frac{\bar{z}u}{(1+|u|^2+|z|^2)^2}\textrm{d}z\wedge \textrm{d}\bar{u} - \frac{\bar{u}z}{(1+|u|^2+|z|^2)^2}\textrm{d}u\wedge \textrm{d}\bar{z}\right. \\
    & + \left.\frac{1+|z|^2}{(1+|u|^2+|z|^2)^2}\textrm{d}u\wedge \textrm{d}\bar{u}
    \right)\\
    &=\sqrt{-1}\partial\bar{\partial} \log (1+|u|^2+|z|^2).
\end{aligned}
\end{equation}

This limit metric does not have singularity along $Z_-1$ since $\beta_2\to 1$ in the limit. Moreover, $\xi_\infty$ has Ricci curvature $3$ since $\mu_{\beta_2}$ tends to $3$ when $\beta_2\to 1$. Moreover, the metric $\xi_\infty$ degenerates along $Z_1 = \{u=\infty\}$ as $\xi_\infty\to 0$ as $|u|\to +\infty$.

We next show that $\xi_\infty$ is the pull-back of the Fubini--Study metric $\omega_{\operatorname{FS}}$ on $\mathbb{P}^2$ under the blow-down map $\pi: \mathbb{F}_1\to \mathbb{P}^2$. Figure \ref{fig_BlowUp} shows the blow up of $\mathbb{P}^2(1, 1, k)$ at $p$ giving rise to $\mathbb{F}_1$. To see this, we regard $\mathbb{F}_1$, which is the blow-up of $\mathbb{P}^2$ at one point $p$ (WLOG assuming $p=[1:0:0]$), as the variety embedded in $\mathbb{P}^2\times \mathbb{P}^1$:
\begin{equation*}
    \mathbb{F}_1 = \{([x_0:x_1:x_2], [y_0:y_1])\in \mathbb{P}^2\times \mathbb{P}^1: x_1 y_1 = x_2 y_0\}.
\end{equation*}
Then the blow-down map $\pi$ is given by:
\begin{equation*}
\begin{aligned}
    \pi: \mathbb{F}_1 &\to \mathbb{P}^2\\
    ([x_0:x_1:x_2], [y_0:y_1])&\mapsto [x_0:x_1:x_2].
 \end{aligned}   
\end{equation*}
Restricted on the chart $\{x_1\neq 0\}$, there hold:
\begin{equation*}
    u = \frac{x_0}{x_1}, \quad z = \frac{x_2}{x_1}
\end{equation*}
and 
\begin{equation}\label{eq_PiForm}
    \pi(u, z) = (u, z)\in\mathbb{P}^2.
\end{equation}
Recall the Fubini--Study metric on $\mathbb{P}^2$ (restriced to the chart $\{x_1\neq 0\}$) has the formula
\begin{equation*}
    \omega_{\operatorname{FS}} = \sqrt{-1}\log(1+|u|^2+|z|^2).
\end{equation*}
Thus by \eqref{eq_XiInfForm} and \eqref{eq_PiForm} we have shown,
\begin{equation*}
    \xi_\infty = \pi^* \omega_{\operatorname{FS}},
\end{equation*}
confirming that $\xi_\infty$ degenerates along the exceptional curve $Z_1$.

We have shown $\xi_{\beta_2}$ converges on any compact subset of $\mathbb{F}_1$ to $\xi_\infty$. Now fix an arbitrary base point on $Z_{-1}\subset\mathbb{F}_1$. By Proposition \ref{prop: length finite k1 case}, the length between $Z_1$ and $Z_{-1}$ remains to be finite when $\beta_2\to 1$. Thus $(\mathbb{F}_1, \xi_{\beta_2}, p)$ converges in the pointed Gromov--Hausdorff sense to $(\mathbb{P}^2, \omega_{\mathrm{FS}}, p)$ when $\beta_2\to 1$. 
\end{proof}

\begin{figure}[!htbp]
    \centering
    \includegraphics[width=10cm]{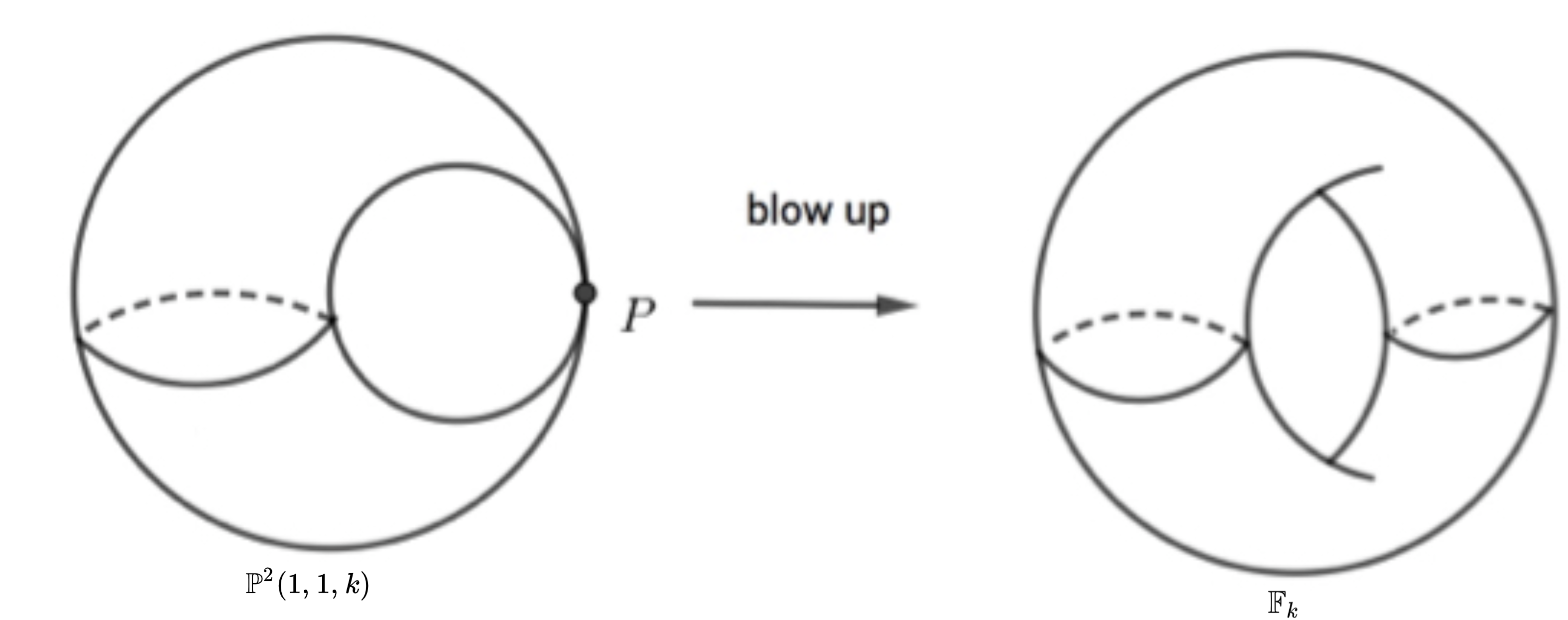}
    \caption{Blow up of $\mathbb{P}^2(1,1,k)$ at $p$ (with $k>1$).}
    \label{fig_BlowUp}
\end{figure}

Next, we consider the limiting behavior of properly renormalized \KE edge metrics on $\mathbb{F}_1$ when $\beta_2 \to 1$.

\begin{lem}\label{lem: two t relations simcase}
Rescaling the metric $\xi$ by a factor $2(2+\beta_2)/(1-\beta_2+\sqrt{(1-\beta_2)(3\beta_2+9)})$, the interval of definition of $\tau$ will change from $[t, 1]$ to $[1, T]$ as in \eqref{eq: T a1 simcase}.
\end{lem}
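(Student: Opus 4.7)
The plan is to reduce the lemma to a single algebraic identity, $Tt = 1$, which I would then verify either by invoking general relations derived earlier in the paper or by direct computation from the explicit formulas.

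First, I would observe that if one multiplies the K\"ahler metric $\xi$ by a positive constant $c$, then the K\"ahler potential $f$ is multiplied by $c$ as well, since $\sqrt{-1}\partial\bar{\partial}(cf) = c\sqrt{-1}\partial\bar{\partial}f$. In particular $\tau = f'(s)$ is scaled by the same factor $c$. So if originally $\tau \in [t, 1]$, after rescaling $\tau \in [ct, c]$. For this interval to coincide with $[1, T]$ one needs $c = 1/t$ and $T = 1/t$. Now the explicit formula for $t$ in \eqref{eq: tb1 simp case} immediately identifies $1/t$ with the stated rescaling factor $2(2+\beta_2)/(1-\beta_2+\sqrt{(1-\beta_2)(3\beta_2+9)})$, which settles the value of the rescaling constant.

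The substantive point is therefore the identity $1/t = T$, where $T$ is given by \eqref{eq: T a1 simcase} and $\beta_1$ is tied to $\beta_2$ via \eqref{eq: tb1 simp case}. The cleanest route is to invoke the two relations $(\tfrac{1}{k} + \beta_1) t^n = \tfrac{1}{k} - \beta_2$ (derived in the proof of Proposition \ref{PropBeta2Beta1LimitAsymp}) and \eqref{Tbeta1beta2Eq}, $(\tfrac{1}{k} - \beta_2) T^n = \tfrac{1}{k} + \beta_1$: both are algebraic identities valid for all admissible parameters, not only in the indicated limits, since their derivations never use any limiting assumption. Specializing to $n=2$, $k=1$ yields
\[
(1+\beta_1)\,t^2 = 1-\beta_2, \qquad (1-\beta_2)\,T^2 = 1+\beta_1,
\]
and multiplying gives $t^2 T^2 = 1$, hence $tT = 1$, as required.

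The main obstacle, if one prefers to bypass the general identities and argue directly from the explicit formulas in \eqref{eq: T a1 simcase} and \eqref{eq: tb1 simp case}, is a modest algebraic manipulation with nested square roots. It reduces to the polynomial identity $(5 + \beta_2)^2 - (1 - \beta_2)(3\beta_2 + 9) = 4(2 + \beta_2)^2$, which follows by direct expansion. Either approach yields the lemma in a few lines.
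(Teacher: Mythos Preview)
Your argument is correct. The key identity $Tt=1$ is exactly what the paper establishes, and your reduction of the lemma to this single fact (via the observation that rescaling $\xi$ by $c$ rescales $\tau$ by $c$, so one needs $c=1/t$ and $c=T$) is the right framing.

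Where you differ from the paper is in how you verify $Tt=1$. The paper stays with the explicit closed-form expressions available in the case $n=2$: it rewrites $T$ as $\dfrac{2(2+\beta_2)}{4-2\beta_1}$ and $t$ as $\dfrac{4-2\beta_1}{2(2+\beta_2)}$ (using that the two expressions relating $\beta_1$ and $\beta_2$ are inverse to one another), from which $T=1/t$ is immediate. Your primary route is more structural: you pull in the two general identities $(1/k+\beta_1)t^n=1/k-\beta_2$ and $(1/k-\beta_2)T^n=1/k+\beta_1$ from Propositions~\ref{PropBeta2Beta1LimitAsymp} and~\ref{prop: b2 limit nk case}, note correctly that their derivations use no limiting hypothesis, and multiply to get $(tT)^n=1$. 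This has the advantage of working uniformly for all $n,k$ and of making transparent why the relation holds, whereas the paper's computation exploits the solvability of the cubic. Your alternative direct verification via $(5+\beta_2)^2-(1-\beta_2)(3\beta_2+9)=4(2+\beta_2)^2$ is also fine and is essentially a compressed version of what the paper does.
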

\begin{proof}
When we calculate in terms of $\beta_1$, we have
\begin{align}
    \beta_2&=\frac{\beta_1-3+3\sqrt{1+\frac{2}{3}\beta_1-\frac{1}{3}\beta_1^2}}{2},\label{eq: b2 compare}\\
    T&=1+3\frac{\sqrt{1+\frac{2}{3}\beta_1-\frac{1}{3}\beta_1^2}+\beta_1-1}{4-2\beta_1}.\label{eq: big t compare}
\end{align}
When we calculate in terms of $\beta_2$, we have
\begin{align}
    \beta_1&=\frac{3}{2}+\frac{1}{2}\beta_2-\frac{1}{2}\sqrt{(\beta_2-1)(-3\beta_2-9)},\label{eq: b1 compare}\\
    t&=\frac{1-\beta_2+\sqrt{(\beta_2-1)(-3\beta_2-9)}}{2(2+\beta_2)}.\label{eq: lil t compare}
\end{align}
Direct calculation shows \eqref{eq: b2 compare} and \eqref{eq: b1 compare} are equivalent. Combining \eqref{eq: b2 compare} and \eqref{eq: big t compare}, we have
\begin{equation*}
    T=\frac{2(2+\beta_2)}{4-2\beta_1}.
\end{equation*}
Combining \eqref{eq: b1 compare} and \eqref{eq: lil t compare}, we have
\begin{equation*}
    t=\frac{4-2\beta_1}{2(2+\beta_2)}.
\end{equation*}
Thus, $T=1/t$ and we can rescale the metric $\eta$ by the factor $1/t$ to change the domain of $\tau$ from $[t, 1]$ to $[1, T]$.
\end{proof}

Inspired by Lemma \ref{lem: two t relations simcase}, we normalize $\xi_{\beta_2}$ by the factor $\sqrt{3}/\sqrt{1-\beta_2}$ and study its limiting behavior when $\beta_2$ tends to $1$.

\begin{thm}\label{thm: rescale simcase}
Rescaling the metric $\xi_{\beta_2}$ by $\sqrt{3}/\sqrt{1-\beta_2}$, then as $\beta_2\to 1$, the renormalized metric converges in the pointed Gromov--Hausdorff sense to a Ricci-flat metric on $-H_{\mathbb{P}^1}$, where the base point is chosen from $Z_1$. See the proof for an explicit explanation. This metric coincides with the one obtained in Theorem \ref{thm: limit n2k1 case}.
\end{thm}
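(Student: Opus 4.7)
The plan is to parallel the proof of Corollary \ref{thm: ResB2} in this specialized $n=2$, $k=1$ setting, with the correct normalization dictated by the identity $T=1/t$ of Lemma \ref{lem: two t relations simcase}. First I would introduce the rescaled radial coordinate
\[
y \;:=\; \frac{\sqrt{3}}{\sqrt{1-\beta_2}}\,\tau,
\]
so that under the rescaled metric $\widetilde{\xi_{\beta_2}}:=(\sqrt{3}/\sqrt{1-\beta_2})\,\xi_{\beta_2}$ the variable $y$ ranges in $[\sqrt{3}\,t/\sqrt{1-\beta_2},\;\sqrt{3}/\sqrt{1-\beta_2}]$. Expanding the formula for $t$ from \eqref{eq: tb1 simp case} as $\beta_2\nearrow 1$ gives $t \sim \sqrt{(1-\beta_2)/3}$, so the interval of definition of $y$ converges to $[1,+\infty)$, matching precisely the range of the Calabi variable for the limit metric of Theorem \ref{thm: limit n2k1 case}.

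Next I would substitute $\tau = \sqrt{(1-\beta_2)/3}\,y$ into \eqref{eq: sol n1k1b2} and multiply by the rescaling factor. A direct calculation gives
\[
\frac{\sqrt{3}}{\sqrt{1-\beta_2}}\,\varphi_{\beta_2}(\tau) \;=\; y - \frac{1}{y} \;-\; \frac{2+\beta_2}{9}\sqrt{3(1-\beta_2)}\, y^2 ,
\]
so the rescaled profile converges smoothly on every compact subset of $y\in(1,+\infty)$ to $\varphi_\infty(y) = (y^2-1)/y$. Plugging this into the Calabi expression \eqref{eq: xi metric} for $\xi_{\beta_2}$ and taking the rescaling into account, I obtain $C^k$-convergence on compact subsets of $-H_{\mathbb{P}^1}$ of $\widetilde{\xi_{\beta_2}}$ to a Ricci-flat K\"ahler edge metric whose Calabi profile coincides with that of $\eta_\infty$ in Theorem \ref{thm: limit n2k1 case}; since both have the same profile and the same cone angle $4\pi$ along $Z_1$, the two metrics agree.

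To upgrade this local smooth convergence to pointed Gromov--Hausdorff convergence with base point on $Z_1$, I would verify that $Z_{-1}$ gets pushed off to infinity. Restricted to a generic fiber, the distance from $\{\tau=t\}$ to $\{\tau=1\}$ under $\widetilde{\xi_{\beta_2}}$ equals $\sqrt{3}/\sqrt{1-\beta_2}$ times the (uniformly finite by Proposition \ref{prop: length finite k1 case}) $\xi_{\beta_2}$-distance; the argument of Proposition \ref{prop: length infty n1 case} applied with the new variable $y$ then gives divergence of this length. Once $Z_{-1}$ is at infinite distance from any base point $p\in Z_1$, the $C^k$-convergence on compact geodesic balls centered at $p$ yields the desired pointed Gromov--Hausdorff convergence to $(-H_{\mathbb{P}^1},\eta_\infty,p)$.

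The principal obstacle will be pinning down the correct leading asymptotics $t(\beta_2)\sim \sqrt{(1-\beta_2)/3}$; this is what calibrates the rescaling constant $\sqrt{3}/\sqrt{1-\beta_2}$ so that the interval of definition of $y$ converges to the right half-line $[1,+\infty)$. Everything else is then a direct substitution and a reuse of the length-divergence estimate from Proposition \ref{prop: length infty n1 case}, mirroring the argument for Corollary \ref{thm: ResB2} in the higher-dimensional case.
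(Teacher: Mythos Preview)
Your proposal is correct and follows essentially the same route as the paper's proof: both introduce the rescaled variable $y=\sqrt{3}\,\tau/\sqrt{1-\beta_2}$, show its range limits to $[1,\infty)$, compute that the rescaled profile converges to $(y^2-1)/y$, identify the limit with the Ricci-flat metric of Theorem \ref{thm: limit n2k1 case}, and then argue that $Z_{-1}$ is pushed to infinity to upgrade to pointed Gromov--Hausdorff convergence. The only slip is that under a metric rescaling by $c$ distances scale by $\sqrt{c}$, not $c$, so the fiber length is multiplied by $(3/(1-\beta_2))^{1/4}$ rather than $\sqrt{3}/\sqrt{1-\beta_2}$; this does not affect the divergence conclusion.
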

\begin{proof}
Consider the change of coordinate $y_{\beta_2}:=\sqrt{3}\tau_{\beta_2}/\sqrt{1-\beta_2}$ in the following ODE:
\begin{equation*}
    \frac{\textrm{d}s}{\textrm{d}\tau_{\beta_2}}=\frac{1}{\varphi_{\beta_2}(\tau_{\beta_2})},
\end{equation*}
In the following equations, we omit the subscript $\beta_2$:
\begin{align*}
    \frac{\textrm{d}s}{\textrm{d}y}\frac{\textrm{d}y}{\textrm{d}\tau}&=\frac{1}{\varphi(y)}\\
    \Rightarrow \frac{\textrm{d}s}{\textrm{d}y}&=\frac{\sqrt{3}}{3}\cdot\frac{(1-\beta_2)y}{(\sqrt{1-\beta_2}y)^2-1+\frac{2+\beta_2}{3}(1-(\sqrt{1-\beta_2}y)^3)}.
\end{align*}
By an abuse of notation, still denote by $y$ the coordinate in the limit. As $\beta_2\to 1$, there holds
\begin{equation}\label{eq: sy mid case}
    \frac{\textrm{d}s}{\textrm{d}y}=\frac{y}{y^2-1},\quad y\in\left(1, +\infty\right),
\end{equation}
where the range of $y$ comes from \eqref{eq: lil t compare}. Recall the renormalized metric $\sqrt{3}\xi_{\beta_2}/\sqrt{1-\beta_2}$ reads
\begin{equation}\label{eq: metric mid case}
\begin{aligned}
    \frac{\sqrt{3}\xi_{\beta_2}}{\sqrt{1-\beta_2}}&=\frac{\sqrt{3}\tau_{\beta_2}}{\sqrt{1-\beta_2}}\pi_1^*\omega_{\operatorname{FS}}+\frac{\sqrt{3}\varphi_{\beta_2}}{\sqrt{1-\beta_2}}\left(\pi_2^*\omega_{\operatorname{Cyl}}+\sqrt{-1}\alpha\wedge\bar{\alpha}-\sqrt{-1}\alpha\wedge\frac{\textrm{d}\bar{u}}{\bar{u}}\right.\\
    &\left.-\sqrt{-1}\frac{\textrm{d}{u}}{{u}}\wedge\alpha\right).
\end{aligned}
\end{equation}
Combining \eqref{eq: sy mid case} and \eqref{eq: metric mid case} we obtain the limit metric
\begin{equation*}
\begin{aligned}
    \tilde{\xi}_\infty&=\sqrt{e^{2s}+1}\pi_1^*\omega_{\operatorname{FS}}+\frac{e^{2s}}{\sqrt{e^{2s}+1}}\left(\pi_2^*\omega_{\operatorname{Cyl}}+\sqrt{-1}\alpha\wedge\bar{\alpha}-\sqrt{-1}\alpha\wedge\frac{\textrm{d}\bar{u}}{\bar{u}}\right.\\
    &\left.-\sqrt{-1}\frac{\textrm{d}{u}}{{u}}\wedge\alpha\right).
\end{aligned}
\end{equation*}
Note that this limit metric is Ricci-flat. It coincides with the metric obatined in Theorem \ref{thm: limit n2k1 case} if we choose $C=1$ there. We have shown the renormalized \KE edge metrics converge to $\tilde{\xi}_\infty$ in smooth local sense. Next, fix a base point from the zero section $Z_1$. Then by Proposition \ref{prop: length finite k1 case}, we conclude that the infinity section $Z_{-1}$ gets pushed-off to infinity in the limit. Combining this fact with the local smooth convergence, we conclude the pointed Gromov--Hausdorff limit of $(\mathbb{F}_1, \sqrt{3}\eta_{\xi_2}/\sqrt{1-\beta_2})$ is $(-H_{\mathbb{P}^1}, \tilde{\xi}_\infty)$.
\end{proof}

\bigskip
{\sc University of Maryland}

{\tt yxji@umd.edu, yanir@alum.mit.edu}

\bigskip

{\sc Laboratory of Mathematics and Complex Systems, School of Mathematical Sciences, Beijing Normal University, Beijing, 100875, P. R. China.}

{\tt kwzhang@bnu.edu.cn}


\end{document}